\newtheorem{theorem}{Theorem}
\newtheorem{thm}{Theorem}[section]
\newtheorem{lem}[thm]{Lemma}
\newtheorem{prop}[thm]{Proposition}
\newtheorem{cor}[thm]{Corollary}
\theoremstyle{remark}
\newtheorem*{remark}{Remark}
\newtheorem*{thm*}{Theorem}
\theoremstyle{definition}
\newtheorem{definition}[thm]{Definition}
\newtheorem*{question}{Question}
\newtheorem{example}[thm]{Example}
\newtheorem{rem}[thm]{Remark}
\newtheorem{fact}[thm]{Fact}
\newcommand{\bbN}{\mathbb{N}}
\newcommand{\bbZ}{\mathbb{Z}}
\newcommand{\bbQ}{\mathbb{Q}}
\newcommand{\bbR}{\mathbb{R}}
\newcommand{\bbH}{\mathbb{H}}
\DeclareMathOperator{\diam}{diam}
\DeclareMathOperator{\vol}{vol}
\DeclareMathOperator{\inj}{inj}
\DeclareMathOperator{\Ric}{Ric}
\DeclareMathOperator{\GL}{GL}
\DeclareMathOperator{\id}{id}
\DeclareMathOperator{\tr}{tr}
\DeclareMathOperator{\Sym}{Sym}
\begin{document}
\title{Negatively curved Einstein metrics on Gromov--Thurston manifolds}
\author{Ursula Hamenst\"adt and Frieder J\"ackel}
\thanks{AMS subject classification: 53C25, 53C21, 22E40}
\thanks{Partially supported by the DFG Schwerpunktprogramm "Geometry at infinity"}
\date{January 14, 2024}

\begin{abstract}
For every $n\geq 4$ we construct infinitely many mututally not homotopic closed
manifolds of dimension $n$ which admit a negatively curved Einstein metric 
but no locally symmetric metric.
 \end{abstract}

\maketitle

\section{Introduction}

As a consequence of the solution to the geometrization conjecture by Perelmann, 
any closed manifold of dimension three which admits a negatively curved metric
also admits a hyperbolic metric, and for surfaces, the corresponding statement is
a classical consequence of the uniformization theorem. This statement is not 
true any more for closed negatively curved manifolds of dimension at least four.

Indeed, Gromov and Thurston \cite{GT87} constructed for each dimension 
$n\geq 4$ and every $\epsilon >0$ a closed manifold $X$ of dimension $n$ which 
admits a metric with curvature contained in the interval $[-1-\epsilon,-1+\epsilon]$
but which does \textit{not} admit a hyperbolic metric. These manifolds 
are cyclic coverings of standard arithmetic hyperbolic manifolds, branched along a 
null-homologous totally 
geodesic submanifold of codimension two. 
In the sequel we call such 
branched coverings 
\textit{Gromov--Thurston manifolds}. 

The proof of non-existence of hyperbolic
metrics on these manifolds is however indirect, that is, 
it it shown that among an infinite collection of candidate manifolds with
pinched curvature, 
only finitely many 
admit hyperbolic metrics. 
%Kapovich \cite{Ka07} showed that there is 
%a subclass of these manifolds which admit convex
%projective structures. 
Much later, Ontaneda \cite{O20} gave a very general method for constructing closed 
Riemannian manifolds 
of any dimension $n\geq 4$ with arbitrarily pinched negative curvature.
Some of the examples he found have in addition some non-zero
Pontryagin numbers. 

It is a natural question whether these manifolds admit distinguished metrics. 
This was partially answered affirmatively
by Fine and Promoselli \cite{FP20} who constructed negatively curved Einstein 
metrics on an infinite family of Gromov-Thurston manifolds in dimension four. 
These metrics do not have constant curvature and therefore 
by the work of Besson, Courtois and Gallot \cite{BCG95}, see also the survey \cite{And10}, 
these manifolds are not homotopy equivalent to 
hyperbolic manifolds. 

The goal of this article is to extend this result to all dimensions. We show

\begin{theorem}\label{main thm}
For any $n\geq 4$ and any $\epsilon >0$ there exist infinitely many pairwise 
non-homeomorphic smooth closed manifolds $X$ of dimension $n$ with the
following properties.
\begin{enumerate}
\item $X$ admits a Riemannian metric with sectional curvature contained in 
$[-1-\epsilon,-1+\epsilon]$.
\item $X$ admits an Einstein metric with negative sectional curvature.
\item $X$ is \emph{not} homeomorphic to any closed locally symmetric space.
\end{enumerate}
\end{theorem}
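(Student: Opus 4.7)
The plan is to extend the dimension-four argument of Fine--Premoselli \cite{FP20} to arbitrary $n \geq 4$, replacing their K\"ahler--Einstein input by a direct implicit function theorem attack on the Einstein equation. Fix an arithmetic closed hyperbolic $n$-manifold $M$ containing a null-homologous totally geodesic codimension-two submanifold $\Sigma$, and let $X_k \to M$ denote the $k$-fold cyclic cover branched along $\Sigma$. By \cite{GT87}, for $k$ sufficiently large depending on $\epsilon$ one can equip $X_k$ with a smooth metric whose sectional curvature lies in $[-1-\epsilon,-1+\epsilon]$ and which agrees with the pullback hyperbolic metric outside a tube around the lifted branch locus $\widetilde\Sigma \subset X_k$; this yields property~(1), and the $X_k$ for varying $k$ are mutually non-homeomorphic, distinguishable e.g.\ by simplicial volume.

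To establish~(2), build an approximate Einstein metric $g_k^{\mathrm{app}}$ on $X_k$ by gluing the pullback hyperbolic metric to an explicit rotationally symmetric warped-product model on a tube about $\widetilde\Sigma$, arranged so that $\Ric(g_k^{\mathrm{app}}) + (n-1)g_k^{\mathrm{app}}$ has small norm in a weighted H\"older space adapted to the tubular geometry. One then solves $\Ric(g) = -(n-1)g$ near $g_k^{\mathrm{app}}$ via an inverse function theorem, after imposing Bianchi gauge to eliminate diffeomorphism invariance. Since $g_k^{\mathrm{app}}$ has pinched negative curvature close to $-1$ and the correction is $C^2$-small, the resulting Einstein metric $g_k$ is negatively curved with sectional curvature pinched near $-1$.

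The main obstacle is uniform invertibility in $k$ of the gauge-fixed linearized Einstein operator at $g_k^{\mathrm{app}}$. Away from $\widetilde\Sigma$ it is a small perturbation of the Lichnerowicz Laplacian of the hyperbolic metric, which has a uniform spectral gap on closed hyperbolic manifolds. In a tube about $\widetilde\Sigma$ the geometry is asymptotic to the product of $\widetilde\Sigma$ with a two-dimensional disk carrying a rotationally symmetric warped metric, and invertibility reduces to an indicial-root analysis of the model operator on this product. The technical heart of the argument is to glue these local inverses into a global parametrix whose operator norm is bounded uniformly in $k$; this requires a careful choice of weight functions and a $k$-independent lower bound on the spectrum of the model operator.

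For property~(3) note first that since $X_k$ admits a metric of strictly negative sectional curvature, $\pi_1(X_k)$ is Gromov hyperbolic and contains no $\bbZ^2$, so any closed locally symmetric space homeomorphic to $X_k$ must be strictly negatively curved, hence rank one (real, complex, quaternionic, or Cayley hyperbolic). On each such target, the entropy rigidity theorem of \cite{BCG95} forces the Einstein metric $g_k$, transported via the homeomorphism, to be homothetic to the locally symmetric metric. The complex, quaternionic, and Cayley hyperbolic canonical metrics have sectional curvature varying by a factor of four, which is incompatible with the pinching of $g_k$ near $-1$; the real hyperbolic case is ruled out because $g_k$ agrees with the non-constantly-curved warped-product model near $\widetilde\Sigma$. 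Hence no such homeomorphism exists.
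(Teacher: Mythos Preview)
Your argument for property~(3) has a genuine gap in dimensions $n\geq 5$. You assert that Besson--Courtois--Gallot entropy rigidity forces any Einstein metric on a closed rank-one locally symmetric manifold to be homothetic to the locally symmetric one, but this is only established in dimension $4$ (via the Gauss--Bonnet--Chern formula; see \cite[Th\'eor\`eme~9.6]{BCG95} or \cite[Corollary~4.6]{And10}). For $n\geq 5$ the entropy inequality yields a volume bound, not metric rigidity for Einstein metrics; indeed, whether every Einstein metric on a closed hyperbolic $n$-manifold has constant curvature is explicitly posed as an open question in the introduction. Hence your exclusion of the real hyperbolic case collapses for $n\geq 5$, and your exclusion of the other rank-one types, which rests on the same false premise, collapses with it. The paper handles (3) by entirely different means: complex hyperbolic is ruled out via Sampson's theorem on harmonic maps from K\"ahler manifolds into real hyperbolic targets, quaternionic and Cayley hyperbolic via vanishing of Pontryagin numbers of Gromov--Thurston manifolds \cite{LR07}, and real hyperbolic by a new rigidity statement (Theorem~\ref{prop: no hyperbolic metric}) showing that for $(M,\Sigma)$ chosen so that $\Sigma$ lies in the fixed set of an orientation-reversing isometric involution, at most one branching degree in $4\bbZ$ can yield a manifold homeomorphic to a hyperbolic one.

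There is also a structural difference in how part~(2) is organised. You fix $(M,\Sigma)$ and let the branching degree $k$ vary, proposing a weighted-H\"older parametrix scheme with an indicial-root analysis near the branch locus. The paper instead fixes the degree $d$ and produces, via subgroup separability in arithmetic lattices, a sequence of base manifolds $M_k\supset\Sigma_k$ with $\diam(\Sigma_k)/R_k^\nu\to 0$, so that $\|\Ric(\bar g_k)+(n-1)\bar g_k\|_{L^2}\to 0$. Uniform invertibility of the linearised operator is then obtained directly from Koiso's $L^2$ spectral gap together with a De~Giorgi--Nash--Moser $C^0$-estimate, working in the hybrid norm $\max(\|\cdot\|_{C^{0,\alpha}},\|\cdot\|_{L^2})$ and avoiding weighted spaces and model-operator analysis altogether. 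Your route may be viable, but the $k$-uniform invertibility you flag as the technical heart is not a triviality, and in any case it does not repair the gap in~(3).
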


The examples in the Theorem are Gromov--Thurston manifolds.  
For $n\geq 5$ they seem to be the first examples of negatively curved Einstein metrics
on manifolds which are \textit{not} locally symmetric.

Our construction builds on the ideas of Fine and Premoselli, however the examples
we find in dimension four are different from the examples in \cite{FP20}. 

The following question is motivated by the uniqueness of Einstein metrics on 
hyperbolic 4-manifolds \cite{BCG95}. 

\begin{question}
Is it true that an Einstein metric on any closed hyperbolic  manifold has constant curvature? 
Does every Gromov--Thurston manifold admit an Einstein metric?
\end{question}

The answer to the first 
question is negative if the Einstein metrics are allowed to have conical singularities (see \Cref{rem: conical Einstein metrics on M}). 
%As these Einstein cone metrics arise via branched coverings, in view of \cite{BCG95}, their volume
%is strictly larger than the volume of the constant curvature metric with the same Einstein constant. 

\subsection{Sketch of proof}

In this subsection we outline the rough strategy for the 
proof of \Cref{main thm} and point out the difference to the proof of Fine--Premoselli.

We start by constructing a particular sequence of closed hyperbolic 
manifolds of which we take the branched cover. 
Namely, using subgroup separability in arithmetic hyperbolic lattices of simplest  type, 
we construct for each $n \geq 4$ a sequence $(M_k)_{k \in \bbN}$ 
of closed hyperbolic manifolds that contain null-homologous closed totally geodesic codimension two submanifolds $\Sigma_k \subseteq M_k$ with at most two connected components and such that
\begin{equation}\label{eq: big normal injectivity radius}
	\lim_{k \to \infty}\frac{{\rm diam}(\Sigma_k)}{R_k^\nu} = 0,
\end{equation}
where $R^{\nu}_k$ is the normal injectivity radius of $\Sigma_k \subseteq M_k$ and, by a slight abuse of notation, ${\rm diam}(\Sigma_k)$ is the maximum of the diameters of the connected components of $\Sigma_k$. 
%The basic idea for this is to exploit \textit{subgroup separability} in order to pass to finite covers that fix the codimension two %submanifold but that kill the finite set of loops that can cause the normal injectivity radius to not be large. 

As $\Sigma_k\subseteq M_k$ is homologous to zero, for any fixed 
integer $d \geq 2$ there exists a cyclic $d$-fold covering $X_k$ of $M_k$, 
branched along $\Sigma_k$. Fine--Premoselli constructed an approximate Einstein metric $\bar{g}_k$ on $X_k$ by gluing together a model Einstein metric and the hyperbolic metric, where the gluing takes place in the region of distance $R_k$ away from $\Sigma_k$. 
We follow this strategy and choose as 
gluing parameter $R_k:=\frac{1}{2}R^{\nu}_k$. From (\ref{eq: big normal injectivity radius}) we then deduce that
\begin{equation}\label{eq: crucial L^2-estimate}
	\int_{X_k}|\Ric(\bar{g}_k)+(n-1)\bar{g}_k|^2(x) \, d\vol_{\bar{g}_k}(x) \xrightarrow{k \to \infty}0.
\end{equation}
From this estimate, we obtain the Einstein metric from an application of the inverse function theorem, using a uniform a priori estimate
for the so-called \emph{Einstein operator}. 

This leaves the question open whether the branched covering manifolds admit a locally symmetric 
metric. As our construction of the Einstein metrics uses a delicate volume estimate, to answer this 
question in the affirmative we can not rely on the indirect argument in \cite{GT87}. Moreover
the rigidity theorem in \cite{BCG95} only holds in dimension four. Instead we 
show in Section \ref{sec: not locally symmetric} 
a result of independent interest whose precise version is \Cref{prop: no hyperbolic metric}.
It states
that given a pair $(M,\Sigma)$ consisting of a closed hyperbolic $n$-manifold $M$ $(n\geq 4)$ 
and a codimension two
null-homologous embedded totally geodesic submanifold $\Sigma$ of $M$
of the form required for our construction,
among the cyclic covers of $M$ branched along $\Sigma$, 
at most one can be homeomorphic to a hyperbolic manifold. 

\subsection{Structure of the article}
The article is organized as follows. In \Cref{sec: Preliminaries} we review the necessary background information. Namely, \Cref{subsec: Einstein operator} introduces the Einstein operator. \Cref{subsec: C^0 estimate} explains how the De Giorgi--Nash--Moser estimates can be used to obtain $C^0$-estimates for the linearized Einstein operator. In \Cref{subsec: approximate Einstein} we recall the construction of the approximate Einstein metric on branched covers due to Fine--Premoselli. The algebraic results about arithmetic hyperbolic manifolds due to Bergeron and Bergeron--Haglund--Wise we use are contained in \Cref{subsec: subgroup separability}. These are then employed in \Cref{sec: good submanifolds} to construct the sequence of closed hyperbolic manifolds containing well-behaved codimension two submanifolds. In \Cref{subsec: Linearized Einstein is invertible} we show that the linearized Einstein operator is invertible. The existence of negatively curved Einstein metrics is then proved in \Cref{subsec: proof of main thm}. Finally, in \Cref{sec: not locally symmetric} we analyze Gromov Thurstion manifolds and, as 
an application, deduce that 
we can find such  manifolds in 
any dimension to which our construction applies and which do not admit any locally symmetric metric.

\bigskip\noindent
{\bf Acknowledgement:} U.H. thanks Alan Reid for pointing out the reference \cite{BHW11}, and Bena Tshishiku for pointing out
the reference \cite{CLW18}.

\section{Preliminaries}\label{sec: Preliminaries}

\subsection{The Einstein operator}\label{subsec: Einstein operator}
As mentioned in the introduction,
 we shall construct the Einstein metric by an application of the implicit function theorem for the so-called \emph{Einstein operator} (see \cite[Section I.1.C]{Biq00}, \cite[page 228]{And06} for more information).
This operator is defined as follows. 

Consider the operator $\Psi:g\to \Ric(g)+(n-1)g$ acting on smooth Riemannian metrics $g$ on the manifold $X$, 
where ${\rm Ric}$ denotes the Ricci tensor. 
As the diffeomorphism group ${\rm Diff}(X)$ of the manifold $X$ acts on metrics by pull-back and $\Psi$ is equivariant for this action,
the linearization of $\Psi$ is \textit{not} elliptic. To remedy this problem, for a given background metric \(\bar{g}\) one defines the \textit{Einstein operator} $\Phi_{\bar{g}}$ (in Bianchi gauge relative to $\bar{g}$) by 
\begin{equation}\label{eq: Def of Phi}
	\Phi_{\bar{g}}(g):=\Ric(g)+(n-1)g+\frac{1}{2}\mathcal{L}_{(\beta_{\bar{g}}(g))^\sharp}(g),
\end{equation}
where the musical isomorphism 
\(\sharp\) is with respect to the metric \(g\), and $\beta_{\bar{g}}$ is the \textit{Bianchi operator} of $\bar{g}$ acting on 
symmetric \((0,2)\)-tensors \(h\) by
\begin{equation}\label{bianchi}
	\beta_{\bar{g}}(h):=\delta_{\bar{g}}(h)+\frac{1}{2}d{\rm tr}_{\bar{g}}(h):=-\sum_{i=1}^n(\nabla_{e_i}h)(\cdot, e_i)+\frac{1}{2}d{\rm tr}_{\bar{g}}(h).
\end{equation}
The exact formula (\ref{eq: Def of Phi}) is not important. What does matter is that, using the formula for the linearization of \(\Ric\) (\cite[Proposition 2.3.7]{Top06}), one computes the linearization of \(\Phi_{\bar{g}}\) at \(\bar{g}\) to be
\begin{equation}\label{eq: linearisation of Einstein}
	(D\Phi_{\bar{g}})_{\bar{g}}(h)=\frac{1}{2}\Delta_L h +(n-1)h.
\end{equation}
Here $\Delta_L$ is the \textit{Lichnerowicz Laplacian} acting on symmetric $(0,2)$-tensors $h$ by
\[
    \Delta_Lh=\nabla^\ast \nabla h + \Ric(h),
\]
where $\nabla^\ast \nabla$ is the Connection Laplacian and $\Ric$ is the \textit{Weitzenböck curvature operator} given by
\(
	\Ric(h)(x,y)=h(\Ric(x),y)+h(x,\Ric(y))-2 \,{\rm tr}_g h(\cdot, R(\cdot,x)y)
\)
(see \cite[Section 9.3.2]{Pet16}).

\Cref{eq: linearisation of Einstein} shows that \((D\Phi_{\bar{g}})_{\bar{g}}\) is an elliptic operator. This opens up the possibility for an application of the implicit function theorem.

The main point is, as has been observed many times in the literature, that the Einstein operator can detect Einstein metrics. The following result
can for example be found in \cite[Lemma 2.1]{And06}.

\begin{lem}[Detecting Einstein metrics]\label{Zeros of Phi are Einstein}
Let \((X,\bar{g})\) be a complete Riemannian manifold, and let \(g\) be another metric on \(X\) so that 
\[
	\sup_{x \in X}|\beta_{\bar{g}}(g)|(x)<\infty  \quad \text{and} \quad \Ric(g) \leq \lambda g   \,\text{ for some } \lambda < 0,
\]
 where \(\beta_{\bar{g}}(\cdot)\) is the Bianchi operator of the background metric \(\bar{g}\). Denote by \(\Phi_{\bar{g}}\) the Einstein operator defined in (\ref{eq: Def of Phi}). Then
\begin{equation*}
 \Phi_{\bar{g}}(g)=0 \quad \text{if and only if } \quad g  \text{ solves the system} \quad \begin{cases} \Ric(g)=-(n-1)g \\
	\beta_{\bar{g}}(g)=0
	\end{cases}.
\end{equation*}
\end{lem}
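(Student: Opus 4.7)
The easy direction ($\Ric(g)=-(n-1)g$ and $\beta_{\bar g}(g)=0$ imply $\Phi_{\bar g}(g)=0$) is immediate: when $\beta_{\bar g}(g)=0$ the vector field $(\beta_{\bar g}(g))^{\sharp}$ vanishes, so the Lie derivative term in \eqref{eq: Def of Phi} disappears and $\Phi_{\bar g}(g)=\Ric(g)+(n-1)g=0$. The content of the lemma is the converse. I would set $V:=(\beta_{\bar g}(g))^{\sharp}$, the pointwise bounded vector field (equivalently, the $g$-dual $1$-form) measuring the failure of $g$ to be in Bianchi gauge, and reduce everything to showing $V\equiv 0$, since then $\Phi_{\bar g}(g)=0$ collapses to the Einstein equation and the definition of $V$ is precisely the gauge condition.

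To exhibit an elliptic equation for $V$, I would apply the Bianchi operator $\beta_g$ (relative to $g$, not to $\bar g$) to the equation $\Phi_{\bar g}(g)=0$. Two standard identities drop out immediately: the contracted second Bianchi identity gives $\beta_g(\Ric(g))=0$, and $\beta_g(g)=0$ since $g$ is $\nabla^{g}$-parallel, leaving $\beta_g(\mathcal{L}_V g)=0$. A direct calculation, commuting the two covariant derivatives appearing in $\delta_g(\mathcal{L}_V g)$ via the Ricci identity, so that the resulting $d\,\mathrm{div}(V)$ term is cancelled by the trace contribution $\tfrac12 d\,\mathrm{tr}_g(\mathcal{L}_V g)$ inside $\beta_g$, produces the Weitzenböck-type identity
\[
\beta_g(\mathcal{L}_V g)\;=\;\nabla^{*}\nabla V+\Ric_g(V),
\]
so that $V$ satisfies the elliptic equation $\nabla^{*}\nabla V+\Ric_g(V)=0$.

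The final step combines this equation with the sign hypothesis $\Ric_g\le\lambda g<0$ via Bochner. Pairing pointwise with $V$ and using the Bochner identity
\[
\tfrac12\Delta_g\lvert V\rvert^{2}\;=\;\lvert\nabla V\rvert^{2}-\langle\nabla^{*}\nabla V,V\rangle\;=\;\lvert\nabla V\rvert^{2}+\Ric_g(V,V),
\]
one obtains a differential inequality for the bounded nonnegative function $\lvert V\rvert^{2}$ whose right-hand side is controlled by $\lambda<0$. On a compact manifold one would just integrate the pointwise identity $\langle\nabla^{*}\nabla V+\Ric_g(V),V\rangle=0$ to obtain $\int_X\lvert\nabla V\rvert^{2}+\int_X\Ric_g(V,V)=0$; the two integrands have opposite signs, so both vanish, and $\Ric_g(V,V)\equiv 0$ combined with the strict inequality $\Ric_g\le\lambda g<0$ forces $V\equiv 0$. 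In the complete, possibly non-compact setting of the statement one instead runs an Omori–Yau maximum-principle or cutoff argument on $\lvert V\rvert^{2}$, using the pointwise hypothesis $\sup_X\lvert\beta_{\bar g}(g)\rvert<\infty$ and the negative Ricci bound to drive $\sup_X\lvert V\rvert^{2}$ to zero. This non-compact maximum-principle step is what I expect to be the main obstacle; by contrast, the derivation of the Weitzenböck equation for $V$ is a mechanical unfolding of the second Bianchi identity.
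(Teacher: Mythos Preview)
The paper does not actually prove this lemma; it merely cites \cite[Lemma~2.1]{And06}. Your overall strategy---apply $\beta_g$ to $\Phi_{\bar g}(g)=0$, use the contracted second Bianchi identity to isolate $\beta_g(\mathcal{L}_Vg)=0$, and then exploit $\Ric_g\le\lambda g<0$ to force $V\equiv 0$---is exactly the standard argument (see Anderson, or \cite[Lemme~I.1.4]{Biq00}), so there is nothing to compare.

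There is, however, a sign error that breaks your argument as written. The Weitzenb\"ock identity is
\[
\beta_g(\mathcal{L}_Vg)=\nabla^{*}\nabla V-\Ric_g(V),
\]
not $+\Ric_g(V)$ (equivalently $2\beta_g\delta_g^{*}=\nabla^{*}\nabla-\Ric$ on $1$-forms). With your sign the integrated identity is $\int_X|\nabla V|^{2}+\int_X\Ric_g(V,V)=0$, and the step ``the two integrands have opposite signs, so both vanish'' is a non sequitur: from $a\ge 0$, $b\le 0$, $a+b=0$ one only gets $a=-b$, not $a=b=0$. With the correct sign one obtains $\int_X|\nabla V|^{2}=\int_X\Ric_g(V,V)$; the left side is $\ge 0$ while the right is $\le\lambda\int_X|V|^{2}\le 0$, so both vanish and the strict inequality $\lambda<0$ forces $V\equiv 0$. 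Thus the fix is a single sign, after which your compact-case argument is correct. For the complete non-compact case, be aware that Omori--Yau in its usual form requires a Ricci \emph{lower} bound, which is not among the hypotheses; one should instead argue directly from the pointwise inequality $-\tfrac{1}{2}\Delta|V|^{2}\ge -\lambda|V|^{2}$ together with $\sup_X|V|<\infty$. In the paper's applications $X$ is closed, so only the compact argument is needed.
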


\subsection{\texorpdfstring{$C^0$-}{estimate}}
\label{subsec: C^0 estimate}

To obtain $C^0$-estimates
for the linearization of the Einstein operator, we use once again a standard tool, the  
De Giorgi--Nash--Moser estimates on manifolds in the following form.

\begin{lem}[$C^0$-estimate]\label{lem: Nash-Moser} 
For all $n \in \bbN$, \(\alpha \in (0,1)\), \(\Lambda \geq 0\), and \(i_0>0\) there exist constants $\rho=\rho(n,\alpha,\Lambda,i_0) > 0$ and $C=C(n,\alpha,\Lambda,i_0)>0$ with the following property. 
Let \((X,g)\) be a Riemannian \(n\)-manifold satisfying 
\[
		|\sec(X,g)| \leq \Lambda  \quad \text{and} \quad \inj(X,g) \geq i_0.
\] 
For \(f \in C^0\big(\Sym^2(T^*X)\big)\) let \(h \in C^2\big({\rm  Sym}^2(T^*X)\big)\) be a solution of
\[
	\frac{1}{2}\Delta_L h+(n-1)h=f.
\]
Then it holds
\begin{equation}\label{C^0} 
	|h|(x) \leq C\Big( ||h||_{L^2(B(x,\rho))}+||f||_{C^0(B(x,\rho))}\Big)
\end{equation}
for all \(x \in M\).
\end{lem}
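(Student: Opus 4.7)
The plan is to reduce the bound for the tensor $h$ to a scalar elliptic differential inequality for the nonnegative function $|h|$ via a Bochner computation, and then apply the classical De Giorgi--Nash--Moser iteration. The hypotheses $|\sec|\leq \Lambda$ and $\inj\geq i_0$ provide a uniform scale $\rho=\rho(n,\Lambda,i_0)>0$ at which the geometric quantities that enter the iteration (volume doubling, local Sobolev inequality, bounds on the Weitzenb\"ock endomorphism) are all controlled.

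First I would apply the Bochner--Weitzenb\"ock formula. From the defining identity $\Delta_L h=\nabla^*\nabla h+\Ric(h)$ and the equation $\tfrac12\Delta_L h+(n-1)h=f$ one gets $\nabla^*\nabla h = 2f-2(n-1)h-\Ric(h)$. Pairing with $h$ and using
\[
\tfrac12\Delta|h|^2=-\langle \nabla^*\nabla h,h\rangle+|\nabla h|^2
\]
(where $\Delta$ denotes the rough Laplacian $\mathrm{div}\,\mathrm{grad}$ on scalars) yields
\[
\tfrac12\Delta|h|^2=|\nabla h|^2-2\langle f,h\rangle+2(n-1)|h|^2+\langle \Ric(h),h\rangle.
\]
The sectional curvature bound $|\sec|\leq \Lambda$ gives $|\langle \Ric(h),h\rangle|\leq c(n)\Lambda |h|^2$, and combining this with Kato's inequality $|\nabla|h||^2\leq |\nabla h|^2$ produces, in the weak sense on $\{|h|>0\}$, a scalar differential inequality of the form
\[
-\Delta|h|\leq C_1(n,\Lambda)\,|h|+|f|.
\]
A standard truncation argument (replacing $|h|$ by $\sqrt{|h|^2+\varepsilon}$ and letting $\varepsilon\to 0$) propagates this inequality across the zero set of $|h|$.

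With this subsolution inequality in hand, the second step is the De Giorgi--Nash--Moser $L^2$-to-$L^\infty$ estimate for $|h|$. The cleanest route is to work in Jost--Karcher harmonic coordinates on geodesic balls of a uniform radius $\rho=\rho(n,\Lambda,i_0)$: the bounds $|\sec|\leq \Lambda$ and $\inj\geq i_0$ guarantee that such coordinates exist on balls of this size, and that the metric components are $C^{1,\alpha}$-close to the Euclidean ones with constants depending only on $(n,\alpha,\Lambda,i_0)$. In these coordinates the rough Laplacian becomes a uniformly elliptic operator in divergence form with bounded measurable coefficients, and the volume density is uniformly comparable to Lebesgue measure. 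Classical Moser iteration then delivers
\[
\sup_{B(x,\rho/2)}|h|\leq C\bigl(\|h\|_{L^2(B(x,\rho))}+\|f\|_{L^\infty(B(x,\rho))}\bigr),
\]
which after replacing $\rho$ by $\rho/2$ and enlarging $C$ is the estimate (\ref{C^0}).

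I do not anticipate a serious obstacle: this is a textbook pairing of the Bochner technique with Moser iteration on a Riemannian manifold. The points that need genuine care are the bookkeeping of the dependencies $\rho=\rho(n,\alpha,\Lambda,i_0)$ and $C=C(n,\alpha,\Lambda,i_0)$, and using Kato's inequality in a distributional rather than pointwise sense so as not to assume $|h|$ is smooth at its zero set. The H\"older exponent $\alpha$ enters only through the regularity of the metric in harmonic coordinates; the final $L^\infty$ bound itself is independent of it.
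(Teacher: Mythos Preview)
Your proposal is correct and follows essentially the same strategy as the paper: derive a scalar differential inequality for $|h|$ via the Bochner identity and Kato's inequality, pass to Jost--Karcher harmonic coordinates of uniform size, and apply the classical De Giorgi--Nash--Moser estimate. The only noteworthy difference is in the handling of the zero set of $|h|$: you use the standard $\varepsilon$-regularization $\sqrt{|h|^2+\varepsilon}$, whereas the paper instead approximates $h$ in $C^2$ by sections transverse to (and hence, for dimension reasons, disjoint from) the zero section of $\Sym^2(T^*X)$ and passes to the limit in the estimate---both devices are routine and lead to the same conclusion.
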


Here as customary, ${\rm sec}(X,g)$ denotes the sectional curvature of the metric $g$ and ${\rm inj}(X,g)$ the 
injectivity radius, and ${\rm Sym}^2(T^*X)$ is the bundle of symmetric $(0,2)$ tensors on $X$.

In the proof we will make use of a result by Jost--Karcher \cite[Satz 5.1]{JK82} or Anderson \cite[Main Lemma 2.2]{And90} that, under the geometric assumptions on $(X,g)$, around every point there exists harmonic charts of a priori size with good analytic control.

\begin{proof}The desired $C^0$-bound will follow from the De Giorgi–-Nash–-Moser estimates. The problem is that De Giorgi--Nash--Moser estimates only hold for scalar equations, but not for systems. To remedy this, we show that \(|h|\) satisfies an elliptic partial differential inequality.

We write $\Delta=\nabla^\ast \nabla =-\tr \nabla^2$ for the Connection Laplacian (on tensors and functions). Using \(\frac{1}{2}\Delta (|h|^2)=\langle \Delta h,h \rangle -|\nabla h|^2\) and \(f=\frac{1}{2}\Delta h+\frac{1}{2}\Ric(h)+(n-1)h\), we obtain
\[
	-\frac{1}{2}\Delta (|h|^2)=-2\langle f,h\rangle + \langle \Ric(h),h\rangle +2(n-1)|h|^2+ |\nabla h|^2.
\]
Note that \(|{\rm Ric}(h)| \leq C(n,\Lambda)|h|\) since \(|\sec(M)| \leq \Lambda\). Thus, together with the Cauchy-Schwarz inequality, the above equality implies
\begin{equation}\label{diff inequality 1}
	-\frac{1}{2}\Delta (|h|^2) + C(n,\Lambda)|h|^2 \geq -2|f| |h| + \vert \nabla h\vert^2.
\end{equation}

Suppose for the moment that \(h \neq 0\) everywhere. Then \(|h|\) is a nowhere vanishing \(C^2\) function. Observe
\[
	|\nabla(|h|)|\leq |\nabla h| \quad \text{and} \quad -\frac{1}{2}\Delta(|h|^2)=-|h|\Delta(|h|)+|\nabla(|h|)|^2.
\]
Combining this with inequality (\ref{diff inequality 1}) and dividing by \(|h|\) shows
\begin{equation}\label{differential inequality for |h|}
	-\Delta(|h|)+C(n,\Lambda)|h|\geq -2|f|.
\end{equation}

By \cite[Satz 5.1]{JK82} (also see \cite[Main Lemma 2.2]{And90}, \cite[page 230]{And06} and \cite[Proposition I.3.2]{Biq00}) there exist \(\rho=\rho(n,\alpha,\Lambda,i_0)>0\) and 
\(C=C(n,\alpha,\Lambda,i_0)\) with the following property. For all \(x \in X\) there exists a \textit{harmonic} chart \(\varphi:B(x,2\rho) \subseteq X \to \bbR^n\) centered at $x$ so that 
\begin{equation}\label{properties of admissable harmonic charts 1}
	e^{-Q}|v|_g \leq |(D\varphi)(v)|_{\rm eucl.} \leq e^Q |v|_g
\end{equation} 
for all \(v \in TB(x,2\rho)\), and for all \(i,j=1,\dots,n\)
\begin{equation}\label{properties of admissable harmonic charts 2}
	||g_{ij}^\varphi||_{C^{1,\alpha}} \leq C,
\end{equation}
where \(Q > 0\) is a very small fixed constant, and \(||\cdot||_{C^{2,\alpha}}\) is the usual Hölder norm of the coefficient functions in \(\varphi(B(p,2\rho) )\subseteq \bbR^n\).

Fix $x \in X$ and choose a harmonic chart $\varphi:B(x,2\rho) \to \bbR^n$ satisfying (\ref{properties of admissable harmonic charts 1}) and (\ref{properties of admissable harmonic charts 2}). In the local harmonic coordinates given by \(\varphi\), the differential inequality (\ref{differential inequality for |h|}) reads
\[
	g_{\varphi}^{ij}\partial_i\partial_j (|h| \circ \varphi^{-1})+C(n,\Lambda)(|h| \circ \varphi^{-1}) \geq -2(|f| \circ \varphi^{-1}) 
		\quad \text{in } \, \varphi\big(B(x_0,2\rho) \big) \subseteq \bbR^n.\]
Since the matrices \((g_{\varphi}^{ij})\) are uniformly elliptic by (\ref{properties of admissable harmonic charts 1}), the desired estimate (\ref{C^0}) follows from the classical De Giorgi–Nash–Moser estimates (see \cite[Theorem 8.17]{GT01}) provided that \(h \neq 0\) everywhere.

It remains to show that the assumption \(h \neq 0\) can be dropped. 
Note that (\ref{C^0}) is stable under \(C^2\)-convergence, that is,
if (\ref{C^0}) holds for a sequence of \(h_i\) and if \(h_i \to h\) in the \(C^2\)-topology, then (\ref{C^0}) also holds for \(h\). Therefore, it suffices to construct a sequence \(h_i\) converging to \(h\) in the \(C^2\)-topology so that \(h_i \neq 0\) everywhere.

An arbitrary \(h \in C^{2}\big( \Sym^2(T^*X)\big)\) can be approximated in the $C^2$-topology by symmetric 
$(0,2)$-tensors $h_i$ $(i\geq 1)$ which are transverse 
to the zero-section of $\Sym^2(T^*X)$. For reasons of dimension, such a section is disjoint from the zero-section, in other words, the tensors $h_i$ vanish nowhere. Therefore, the estimate (\ref{C^0}) holds for all \(h \in C^{2}\big( \Sym^2(T^*X)\big)\) and \(x \in X\).
\end{proof}

\subsection{The approximate Einstein metric of Fine--Premoselli}\label{subsec: approximate Einstein}

In this subsection we review the construction of the approximate Einstein metric on the branched cover of a hyperbolic manifold due to Fine--Premoselli. We refer the reader to \cite[Section 3]{FP20} for more information.

Let $M$ be a closed hyperbolic manifold of dimension $n \geq 4$, and let $\Sigma \subseteq M$ be a closed null-homologous totally geodesic codimension two embedded submanifold. Fix an integer $d \geq 2$ and denote by $p:X \to M$ the cyclic $d$-fold cover 
branched along $\Sigma$. We refer to \cite{FP20} and to Section \ref{sec: good submanifolds} for an explicit construction of 
such branched covers. 

Define $r:X \to \bbR$ by $r(x)=d_M(p(x),\Sigma)$ and $u=\cosh(r)$. Set
\[
    U_{\rm max}:=\cosh(R^\nu),
\]
where $R^\nu$ is the normal injectivity radius of $\Sigma \subseteq M$. The construction of the approximate Einstein metric also depends on a choice of gluing parameter $U_{\rm glue} < \frac{1}{2}U_{\rm max}$. We will later choose $U_{\rm glue}=(U_{\rm max})^{1/2}$, though this is irrelevant at the moment.

The following proposition summarizes all the necessary information about the approximate Einstein metric that will be used later.

\begin{prop}[The approximate Einstein metric]\label{prop: Properties of the approximate Einstein metric}
    There exists a smooth Riemannian metric $\bar{g}$ on the branched covering $X$ with the following properties:
    \begin{enumerate}[(i)]
        \item For all $m \in \bbN$ there exists a constant $C=C(m,n,d)$ such that
            \[
                ||\Ric(\bar{g})+(n-1)\bar{g}||_{C^m(X,\bar{g})} \leq C U_{\rm glue}^{-(n-1)};
            \]
        \item The tensor $\Ric(\bar{g})+(n-1)\bar{g}$ is supported in the region $\big\{\frac{1}{2}U_{\rm glue} < u < U_{\rm glue}\big\}$;
        \item There exists a constant $c=c(n,d) > 0$ such that $\sec(X,\bar{g}) \leq -c < 0$;
        \item For all $U < U_{\rm max}$ the volume of the region $\big\{\frac{1}{2}U < u < U\big\}$ is bounded from above by
            \[
                \vol_n\left(\left\{\frac{1}{2}U < u < U \right\},\bar{g}\right) 
                \leq
                C U^{n-1}\vol_{n-2}(\Sigma,g_{\rm hyp})
            \]
        for a constant $C=C(n,d)$.
    \end{enumerate}
\end{prop}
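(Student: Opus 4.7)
The plan is to construct $\bar{g}$ by gluing two genuine Einstein metrics via a radial cut-off, following Fine--Premoselli \cite[Section 3]{FP20}. Denote by $p^*g_{\rm hyp}$ the pulled-back hyperbolic metric on $X$; it is smooth away from $p^{-1}(\Sigma)$ and has a cone singularity of total angle $2\pi d$ along the branch locus. Near a component of $p^{-1}(\Sigma)$, in Fermi-type coordinates $(r,\theta,y)$ with $\theta \in \mathbb{R}/2\pi\mathbb{Z}$ and $y \in \Sigma$, one introduces a warped-product Einstein model
\begin{equation*}
    g_0 = dr^2 + f(r)^2 \, d\theta^2 + h(r)^2 \, g_\Sigma,
\end{equation*}
and solves the resulting Einstein ODE for $f,h$ subject to the boundary conditions $f(0)=0$, $f'(0)=1$, $h'(0)=0$ (which remove the conical singularity), while arranging that $f\to d\sinh$ and $h\to\cosh$ at infinity with asymptotic expansion $g_0 - p^*g_{\rm hyp} = O\bigl(u^{-(n-1)}\bigr)$ in $C^\infty$.

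Next, fix a smooth cut-off $\chi=\chi(u)$ of the function $u=\cosh(r)$, with $\chi \equiv 1$ on $\{u \leq U_{\rm glue}/2\}$ and $\chi \equiv 0$ on $\{u \geq U_{\rm glue}\}$, and with $|\chi^{(k)}|$ uniformly bounded for each $k$. Set
\begin{equation*}
    \bar{g} := \chi(u)\, g_0 + \bigl(1-\chi(u)\bigr)\, p^*g_{\rm hyp}.
\end{equation*}
Property (ii) is then immediate, since $\bar{g}$ coincides with a genuine Einstein metric (with $\Ric=-(n-1)g$) outside the annulus $\{U_{\rm glue}/2 < u < U_{\rm glue}\}$. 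For (iii), the hyperbolic metric has curvature $-1$, and one checks directly from the warped-product formula that $g_0$ has sectional curvature bounded above by some $-c(n,d)<0$; on the gluing annulus $\bar{g}$ is a $C^2$-small perturbation of $p^*g_{\rm hyp}$ of size $U_{\rm glue}^{-(n-1)}$, so its curvatures remain negative with a uniform gap from zero once $U_{\rm glue}$ is large enough.

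Property (i) is the quantitative heart of the statement. Writing
\begin{equation*}
    \Ric(\bar{g}) + (n-1)\bar{g} = \bigl(\Ric(\bar{g}) - \Ric(p^*g_{\rm hyp})\bigr) - (n-1)\chi\bigl(g_0 - p^*g_{\rm hyp}\bigr),
\end{equation*}
and Taylor-expanding the nonlinear operator $\Ric$ around $p^*g_{\rm hyp}$, each term on the right is a polynomial in $\chi,\chi',\chi''$ and covariant derivatives of $g_0-p^*g_{\rm hyp}$ up to order two. All of these are controlled by $Cu^{-(n-1)}$ on the gluing annulus, thanks to the asymptotic expansion of $g_0$ together with the observation that $\chi$ is expressed in terms of $u$, so its derivatives remain uniformly bounded as $U_{\rm glue}\to\infty$. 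Iterating covariant differentiation yields the full $C^m$-estimate. For (iv), the tubular neighbourhood of $\Sigma$ in the branched cover carries to leading order the volume form $d\,\sinh(r)\cosh^{n-2}(r)\, dr \wedge d\theta \wedge d\vol_\Sigma$, so integrating from $r=\operatorname{arcosh}(U/2)$ to $r=\operatorname{arcosh}(U)$ produces a quantity of order $U^{n-1}\vol_{n-2}(\Sigma)$, as required.

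The main obstacle is producing the Einstein model $g_0$ in general dimension with the sharp asymptotic $g_0 - p^*g_{\rm hyp} = O(u^{-(n-1)})$. In dimension four this is an explicit Kähler--Einstein metric used by Fine--Premoselli; in higher dimensions it requires a careful study of the Einstein warped-product ODE and the matching of its asymptotics with the hyperbolic model. Once such a model is in hand, properties (i)--(iv) reduce to the gluing estimates sketched above.
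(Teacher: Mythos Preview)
Your outline follows the right architecture, but the part you flag as the ``main obstacle'' is not an obstacle at all, and misidentifying it leaves a real gap in the proposal.

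The model Einstein metric exists \emph{explicitly} in every dimension $n\geq 4$, not just $n=4$, and it is not obtained as a K\"ahler--Einstein metric nor via any delicate ODE analysis. Following \cite[Section~3]{FP20}, the paper works in the variable $u=\cosh(r)$ and uses the one-function ansatz
\[
g=\frac{du^2}{V(u)}+V(u)\,d\theta^2+u^2 g_S,
\]
which differs from your two-function warped product $dr^2+f^2\,d\theta^2+h^2 g_\Sigma$ by \emph{fixing} the $g_\Sigma$-coefficient to its hyperbolic value. With this ansatz the Einstein condition reduces to a first-order linear ODE whose general solution is
\[
V_a(u)=u^2-1+\frac{a}{u^{n-3}},\qquad a\in\bbR,
\]
valid in every dimension. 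One then reads off the asymptotic you wanted directly: $V_a-(u^2-1)=a\,u^{-(n-3)}$, which in the metric norm gives $|g_{a}-g_{\rm hyp}|=O(u^{-(n-1)})$. The cone-angle matching is also explicit: for each $d\geq 1$ there is a unique $a=a(d)$ making the cone angle at the largest zero $u_{a}$ of $V_a$ equal to $2\pi/d$, so that $g_{a(d)}$ is smooth on the $d$-fold branched cover. Thus properties (i)--(iii) are exactly \cite[Proposition~3.1]{FP20}, and the paper simply cites that result; no new higher-dimensional analysis is needed.

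A secondary difference: the interpolation in the paper is \emph{not} the convex combination $\chi g_0+(1-\chi)p^*g_{\rm hyp}$ you wrote, but rather takes place inside the function $V$, setting $V(u)=u^2-1+\dfrac{a}{u^{n-3}}\chi(u)$. This keeps $\bar g$ within the ansatz, so that for instance the volume form is exactly $u^{n-2}\,du\wedge d\theta\wedge d\vol_\Sigma$ and (iv) is a one-line integration. Your convex-combination gluing would still yield the same order of estimates, but it loses this structural simplicity and makes (iii) and (iv) slightly more laborious to verify.
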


Points (i)-(iii) are contained in \cite[Proposition 3.1]{FP20}. Property (iii) requires that the gluing parameter
$U_{\rm glue}$ is larger than a constant depending on $n$ and $d$. As in \cite{FP20}, in our construction this
will always be the case. 
Point (iv) follows from the explicit construction, which we are now going to explain. 

Consider $\bbH^n$ and fix a totally geodesic copy $S \subseteq \bbH^n$ of $\bbH^{n-2}$. Then in exponential normal coordinates centered at $S$, the hyperbolic metric of $\bbH^n$ is given by
\[
    g_{\bbH^n}=dr^2+\sinh^2(r)d\theta^2+\cosh^2(r)g_S,
\]
where $g_S$ is the hyperbolic metric of $S$. Using the change of variables $u=\cosh(r)$, this becomes
\[
    g_{\bbH^n}=\frac{du^2}{u^2-1}+(u^2-1)d\theta^2+u^2g_S,
\]
which is defined for $(u,\theta) \in (1,\infty) \times S^1$.

Fine--Premoselli consider metrics of the form
\begin{equation}\label{eq: Ansatz for Einstein metric}
    g=\frac{du^2}{V(u)}+V(u)d\theta^2+u^2g_S,
\end{equation}
where $V$ is a positive smooth function. The following is \cite[Proposition 3.2]{FP20}.

\begin{lem}
The metric $g$ defined in (\ref{eq: Ansatz for Einstein metric}) solves $\Ric(g)+(n-1)g=0$ if and only if $V$ is of the form
\begin{equation}\label{eq: Einstein functions}
    V(u)=u^2-1+\frac{a}{u^{n-3}}
\end{equation}
for some $a \in \bbR$. 
\end{lem}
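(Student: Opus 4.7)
The plan is to view $g$ as a multiply-warped product over an interval in the $u$-direction and reduce the Einstein condition to a single ODE for $V$ which can be solved in closed form. First, change variable to an arc-length parameter $t$ via $dt = du/\sqrt{V}$, so that
\[
    g = dt^2 + f(t)^2 d\theta^2 + h(t)^2 g_S
\]
with $f(t) = \sqrt{V(u(t))}$, $h(t) = u(t)$, the $S^1$-factor flat, and $(S,g_S)$ hyperbolic of curvature $-1$. Using $\dot u = \sqrt{V}$, a short chain-rule computation expresses $\dot f, \ddot f, \dot h, \ddot h$ in terms of $V, V', V''$, where $' = d/du$.

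Next apply the standard Ricci formulas for a multiply-warped product $I \times_{f_1} F_1 \times_{f_2} F_2$ (the single-warped-product formulas iterated; see e.g.\ \cite{Pet16}). Since $g$ decomposes along three mutually orthogonal distributions, $\Ric(g) + (n-1)g$ has exactly three independent components: along $\partial_t$, along $\partial_\theta$, and along a unit $S$-direction, where one uses $\Ric_S = -(n-3)g_S$. Writing these out and cancelling overall $g$-factors yields an explicit system of scalar equations in $V$.

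At this point one observes that the $(\partial_t,\partial_t)$- and $(\partial_\theta,\partial_\theta)$-equations coincide, reducing to the second-order ODE $V'' + \frac{n-2}{u}V' = 2(n-1)$, while the $S$-direction equation gives the first-order ODE
\[
    uV' + (n-3)(V+1) = (n-1)u^2.
\]
A direct differentiation shows the second-order ODE is the $u$-derivative of the first-order one, so the whole system is equivalent to this single first-order linear ODE. Using the integrating factor $u^{n-3}$ (valid since $n \geq 4$), integration gives $u^{n-3}V = u^{n-1} - u^{n-3} + a$, i.e.\ $V(u) = u^2 - 1 + a/u^{n-3}$, as claimed.

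The only real obstacle is the careful bookkeeping in the doubly-warped Ricci computation, in particular tracking which warping factor and which base metric is used to measure each tensor. As a sanity check, $a=0$ recovers the hyperbolic metric on $\bbH^n$ in exponential normal coordinates around $S$, which is manifestly Einstein with $\Ric = -(n-1)g$.
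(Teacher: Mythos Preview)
The paper does not actually prove this lemma; it simply records it as \cite[Proposition 3.2]{FP20} and moves on. Your argument supplies the missing computation and is correct: the arc-length reparametrisation casts $g$ as a doubly warped product, the standard multiply-warped Ricci formulas yield exactly the three scalar equations you list, the $\partial_t$- and $\partial_\theta$-equations do coincide (both reduce to $V''+\tfrac{n-2}{u}V'=2(n-1)$), and this is indeed the $u$-derivative of the first-order equation $uV'+(n-3)(V+1)=(n-1)u^2$. The integrating-factor step is straightforward, and the $a=0$ sanity check is a nice touch. The only thing left implicit is that the off-block-diagonal components of $\Ric(g)$ vanish automatically for a multiply warped product over a one-dimensional base, but this is standard and hardly worth spelling out.
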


Let $g_a$ be the metric (\ref{eq: Ansatz for Einstein metric}) for the function $V=V_a$ given by (\ref{eq: Einstein functions}). Let $u_a$ denote the largest zero of the function $V_a$. If $u_a > 0$, $g_a$ is a smooth Riemannian metric for $u \in (u_a,\infty)$, but in general it will have a cone singularity along $S$ at $u=u_a$ with cone angle depending on $a$. The following summarizes \cite[Lemma 3.3]{FP20}.

\begin{lem}\label{lem: cone singularities of Ansatz}
There are explicit constants $a_{\rm max}=a_{\rm max}(n) > 0$ and $v=v(n)>0$ such that the following hold.
\begin{enumerate}[(i)]
    \item We have $u_a > 0$ if and only if $a \in (-\infty,a_{\rm max}]$ and the map $a \mapsto u_a$ is a decreasing homeomorphism $(-\infty,a_{\rm max}] \to [v,\infty)$;
    \item For each integer $d \geq 1$ there exists a unique $a=a(d) \in (-\infty,a_{\rm max})$ such that the cone angle of $g_a$ along $S$ at $u=u_a$ is $\frac{2\pi}{d}$.
    \item The sequence $(a(d))_{d \in \bbN}$ is strictly increasing with $a(1)=0$ and $a(d) \to a_{\rm max}$ as $d \to \infty$. 
\end{enumerate}
\end{lem}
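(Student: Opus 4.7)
The plan is to study the one-parameter family $V_a(u) = u^2 - 1 + a\,u^{-(n-3)}$ via the auxiliary function $\alpha(u) := (1-u^2)u^{n-3}$, which records the value of $a$ for which a prescribed $u > 0$ is a zero of $V_a$. A direct computation gives $\alpha'(u) = u^{n-4}\bigl[(n-3) - (n-1)u^2\bigr]$, so on $(0, \infty)$ the function $\alpha$ vanishes at $u = 1$, attains its unique maximum $a_{\max} := \alpha(v)$ at $v := \sqrt{(n-3)/(n-1)}$, and tends to $-\infty$ as $u \to \infty$. The restriction $\alpha|_{[v,\infty)}$ is thus a strictly decreasing homeomorphism onto $(-\infty, a_{\max}]$, so I would \emph{define} $u_a$ to be its inverse. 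It then remains to verify that $u_a$ is indeed the \emph{largest} positive zero of $V_a$: for $a \leq 0$ the identity $V_a'(u) = 2u - (n-3)a/u^{n-2}$ shows $V_a$ is strictly increasing on $(0, \infty)$, giving a unique zero $u_a > 1$; for $0 < a < a_{\max}$, $V_a$ has a unique critical point $u_\star \in (0, v)$ at which $V_a(u_\star) < 0$, producing exactly two positive zeros, and by the shape of $\alpha$ these lie in $(0,v)$ and $(v, 1)$ respectively. At $a = a_{\max}$ the two zeros coalesce at $v$. This proves (i).

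For (ii), I would analyze $g_a$ near $u = u_a$. Since $V_a(u_a) = 0$ and $V_a'(u_a) > 0$ whenever $a < a_{\max}$, introducing the coordinate $\rho$ defined by $d\rho = du/\sqrt{V_a(u)}$ gives $\rho \sim 2\sqrt{(u - u_a)/V_a'(u_a)}$ near $u_a$, and the $(u,\theta)$-part of $g_a$ takes the model form
\[
d\rho^2 + \Bigl(\tfrac{V_a'(u_a)}{2}\Bigr)^{\!2}\rho^2\, d\theta^2,
\qquad \theta \in \bbR/2\pi\bbZ.
\]
Thus $g_a$ extends across $\{u = u_a\}$ as a cone of total angle $\pi V_a'(u_a)$, and the condition of cone angle $2\pi/d$ is $V_a'(u_a) = 2/d$. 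Using $V_a(u_a) = 0$ to eliminate $a$ gives $V_a'(u_a) = (n-1)u_a - (n-3)/u_a$, and equating this with $2/d$ produces the quadratic $(n-1)d\,u_a^2 - 2u_a - (n-3)d = 0$, whose unique positive root is
\[
u_{a(d)} = \frac{1 + \sqrt{1 + (n-1)(n-3)d^2}}{(n-1)d}.
\]
Since $a \mapsto u_a$ is the homeomorphism of (i), this determines a unique $a(d) \in (-\infty, a_{\max})$.

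For (iii), I rewrite $u_{a(d)} = \tfrac{1}{(n-1)d} + \sqrt{\tfrac{1}{(n-1)^2 d^2} + \tfrac{n-3}{n-1}}$ as a sum of two strictly decreasing functions of $d$, so $d \mapsto u_{a(d)}$ is strictly decreasing; since $a \mapsto u_a$ is itself decreasing by (i), $d \mapsto a(d)$ is therefore strictly increasing. Plugging $d = 1$ into the closed form and using $1 + (n-1)(n-3) = (n-2)^2$ yields $u_{a(1)} = 1$, hence $a(1) = \alpha(1) = 0$; as $d \to \infty$ the formula gives $u_{a(d)} \to \sqrt{(n-3)/(n-1)} = v$, hence $a(d) \to a_{\max}$.

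The one delicate step is the cone-angle computation, which requires careful bookkeeping with the $\sqrt{V_a}$ change of variable and with the asymptotics of $\rho$ as $u \to u_a^+$; the rest is elementary calculus applied to the explicit family $V_a$.
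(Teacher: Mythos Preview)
Your argument is correct. The paper itself does not supply a proof of this lemma: it is stated as a summary of \cite[Lemma~3.3]{FP20} and is simply quoted. Your write-up is precisely the elementary computation that underlies that result---inverting the map $u\mapsto \alpha(u)=(1-u^2)u^{n-3}$ on $[v,\infty)$, reading off the cone angle as $\pi V_a'(u_a)$ via the substitution $d\rho=du/\sqrt{V_a(u)}$, and solving the resulting quadratic for $u_{a(d)}$---so there is no methodological difference to compare. One tiny remark: your observation that the critical point $u_\star$ of $V_a$ lies in $(0,v)$ for $0<a<a_{\max}$ is correct (since $u_\star=((n-3)a/2)^{1/(n-1)}$ is increasing in $a$ and equals $v$ at $a=a_{\max}$), but it is not actually needed---the shape of $\alpha$ already tells you the two positive zeros lie in $(0,v)$ and $(v,1)$, and hence the larger one is the value of your inverse map.
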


Therefore, the metric $g_{a(d)}$ defines a global smooth metric on the cyclic $d$-fold branched cover of $\bbH^n$ along $S$. Of course, this is also true in $X$, the cyclic $d$-fold branched cover of $M$ along $\Sigma$, at least in a tubular neighbourhood of $\Sigma$. 

The approximate Einstein metric $\bar{g}$ in \Cref{prop: Properties of the approximate Einstein metric} is then obtained by interpolating between $g_{a(d)}$ (defined in a tubular neighbourhood of $\Sigma$) and $g_{\rm hyp}$ (defined on $X \setminus \Sigma$). Namely, $\bar{g}$ is as in (\ref{eq: Ansatz for Einstein metric}) for a function $V$ of the form
\[
    V=u^2-1+\frac{a}{u^{n-3}}\chi(u),
\]
where $\chi$ smooth cutoff function with $\chi=1$ in $\{u \leq \frac{1}{2}U_{\rm glue}\}$ and $\chi=0$ in $\{u \geq U_{\rm glue}\}$. We refer the reader to \cite[Section 3.2]{FP20} for further details.

The volume estimate in \Cref{prop: Properties of the approximate Einstein metric}(iv) follows easily because $\bar{g}$ is of the form (\ref{eq: Ansatz for Einstein metric}).

\subsection{Algebraic retraction and subgroup separability}\label{subsec: subgroup separability}

In this subsection we state the results about arithmetic hyperbolic manifolds and arithmetic groups from Bergeron--Haglund--Wise \cite{BHW11} 
and Bergeron \cite{Ber00} that are needed for the construction of good totally geodesic submanifolds of codimension two (see \Cref{prop: good submanifolds}).

Consider a $\mathbb{Q}$- algebraic group $\bf{G}$ such that the group of its real points is
the product, with finite intersection, of a compact group by the isometry group 
${\rm O}^+(n,1)$ of $\mathbb{H}^n$ for some
$n\geq 4$. We require that $\bf G$
comes by restriction of scalars from an orthogonal group over a totally real number field and that
$\bf{G}$ is anisotropic over $\mathbb{Q}$. 
The arithmetic group $\Gamma$ is the subgroup of $\bf{G}$ which is 
defined over the ring of integers in the totally real number field; it  
acts cocompactly on $\mathbb{H}^n$. The compact 
hyperbolic orbifold $\Gamma\backslash \mathbb{H}^n$ is 
called \emph{standard}, and $\Gamma$ is a called a standard arithmetic lattice or an arithmetic lattice of 
simplest type. 
A \emph{sufficiently deep} congruence subgroup $\Gamma^\prime$ of $\Gamma$ is known to be torsion free
and hence acts freely on $\mathbb{H}^n$. 
Following \cite{BHW11} we call the quotient $\Gamma^\prime \backslash \mathbb{H}^n$ a 
\emph{standard arithmetic hyperbolic manifold}.
%We refer to Section \ref{sec: not locally symmetric} for additional information on these manifolds. 

A \emph{$\Gamma$-hyperplane} in $\mathbb{H}^n$ is a totally geodesic hyperplane $H\subset \mathbb{H}^n$ with the property
that ${\rm Stab}_\Gamma(H)$ acts cocompactly on $H$. 
If $\Gamma$ is an arithmetic group, then 
it is well-known that there exists a $\Gamma$-hyperplane in $\bbH^n$ 
if and only if $\Gamma$ is standard. Similarly, a \emph{$\Gamma$-subspace} is a totally geodesic 
subspace $\Sigma$ of $\mathbb{H}^n$ of arbitrary codimension
so that ${\rm Stab}_{\Gamma}(\Sigma)$ acts cocompactly on $\Sigma$.

\begin{definition}\label{def:subsep}
A subgroup $\Lambda$ of a group $\Gamma$ is called \emph{separable} if for any 
$\gamma\in \Gamma \setminus \Lambda$, there exists a finite index subgroup $\Gamma^\prime\leqslant \Gamma$ such that $\Lambda \leqslant \Gamma^\prime$ and $\gamma \notin \Gamma^\prime$.
\end{definition}

The following is a special case of a result of Bergeron (see \cite[Lemme principal]{Ber00} or \cite[Corollary 1.12]{BHW11}).

\begin{thm}[Subgroup Separability]\label{subgroupsep}
Let $M=\Gamma \backslash \bbH^n$ be a standard arithmetic hyperbolic manifold and $\Sigma$ a $\Gamma$-subspace. Then ${\rm Stab}_{\Gamma}(\Sigma)$ is separable in $\Gamma$.

\end{thm}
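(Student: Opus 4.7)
My plan is to deduce the theorem by combining the standard geometric picture of a uniform hyperbolic lattice with the deep cubulation results for standard arithmetic lattices, and then to apply the Haglund--Wise separability criterion for quasi-convex subgroups of virtually compact special groups.

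First, I would set up the geometric framework. Since $M=\Gamma\backslash\mathbb{H}^n$ is a closed hyperbolic manifold, $\Gamma$ is a word-hyperbolic group and every orbit map $\Gamma\to\mathbb{H}^n$ is a quasi-isometry. Because $\Sigma$ is a totally geodesic (hence convex) subspace of $\mathbb{H}^n$ on which $H:={\rm Stab}_\Gamma(\Sigma)$ acts cocompactly by hypothesis, any orbit $H\cdot x_0$ sits at bounded Hausdorff distance from $\Sigma$. Pulled back through the orbit map, the Morse lemma then shows that $H$ is a quasi-convex subgroup of $\Gamma$. This step is entirely soft and uses nothing beyond hyperbolicity and cocompactness on $\Sigma$.

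Second, I would invoke a cubulation of $\Gamma$. The collection of $\Gamma$-hyperplanes in $\mathbb{H}^n$ forms a $\Gamma$-invariant wall structure; Sageev's dualization then produces a CAT(0) cube complex $C$ on which $\Gamma$ acts by cubical automorphisms. The content of Bergeron--Haglund--Wise is that, because $\Gamma$ arises by restriction of scalars from an anisotropic orthogonal group over a totally real number field, there exist sufficiently many $\Gamma$-hyperplanes in general position to make the $\Gamma$-action on $C$ proper and cocompact, and moreover a finite-index subgroup $\Gamma_0\leqslant\Gamma$ acts specially, so that $\Gamma_0\backslash C$ is a compact special cube complex in the sense of Haglund--Wise.

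Third, the Haglund--Wise theorem asserts that in any virtually compact special word-hyperbolic group, every quasi-convex subgroup is separable. Applied to $H\leqslant\Gamma$, which is quasi-convex by the first step and sits inside a virtually compact special group by the second, this yields exactly the conclusion of the theorem.

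The hard part is clearly the cubulation step: verifying that the Sageev dual complex to the $\Gamma$-hyperplane wall system is finite-dimensional and cocompact under $\Gamma$ requires a quantitative abundance of $\Gamma$-hyperplanes meeting in controlled configurations, which is a genuinely arithmetic statement resting on the plentiful supply of rational reflections in the defining quadratic form. An alternative (and more classical) route, taken by Bergeron in the original \cite{Ber00}, bypasses cube complexes entirely: given $\gamma\notin H$, one translates $\Sigma$ to $\gamma\cdot\Sigma$ and uses strong approximation in a deep congruence quotient to produce a finite-index subgroup $\Gamma'\leqslant\Gamma$ with $H\subseteq\Gamma'$ but $\gamma\notin\Gamma'$; the technical heart there is controlling the arithmetic separation of $\Sigma$ from its translate, which is where the arithmeticity and anisotropy hypotheses enter.
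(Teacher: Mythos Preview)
Your proposal is correct, but note that the paper does not actually prove this theorem: it is stated without proof as a citation of Bergeron's \emph{Lemme principal} \cite{Ber00} and of \cite[Corollary~1.12]{BHW11}. So there is no ``paper's own proof'' to compare against beyond those references.

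That said, your outline accurately reconstructs the argument underlying the second of these references. The route via quasi-convexity of ${\rm Stab}_\Gamma(\Sigma)$, the Bergeron--Haglund--Wise cubulation of standard arithmetic lattices, and the Haglund--Wise separability theorem for quasi-convex subgroups of virtually compact special hyperbolic groups is exactly how \cite{BHW11} obtains separability (indeed they prove the stronger virtual retract property, which the paper uses elsewhere). Your closing remark about the alternative congruence-subgroup approach correctly identifies the strategy of the original \cite{Ber00}, which predates the cubical technology and works directly with the arithmetic structure. Either route is acceptable; the paper is agnostic between them and simply invokes the result as a black box.
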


Note that if $\Gamma$ is a group as in \Cref{subgroupsep}, if $\Sigma$ is a $\Gamma$-subspace 
and if $\Gamma^\prime$ is a finite index subgroup of $\Gamma$, then 
${\rm Stab}_{\Gamma^\prime}(\Sigma)\leqslant \Gamma^\prime$ is separable. Indeed, if $\gamma\in \Gamma^\prime \setminus {\rm Stab}_{\Gamma^\prime}(\Sigma)$, then $\gamma\in \Gamma \setminus {\rm Stab}_\Gamma(\Sigma)$. Thus 
if $\Gamma^{\prime\prime}$ is a finite index subgroup of 
$\Gamma$ which contains ${\rm Stab}_{\Gamma}(\Sigma)$ but not $\gamma$, then 
$\Gamma^{\prime\prime}\cap \Gamma^\prime$ is a finite index subgroup of $\Gamma^\prime$
which contains ${\rm Stab}_{\Gamma^\prime}(\Sigma)$ but not $\gamma$.

The following summarizes the results of \cite{BHW11} that will be needed in the sequel.

\begin{thm}[Bergeron--Haglund--Wise]\label{thm: BHW retraction}
Let $M= {\Gamma} \backslash {\bbH^n}$ be a standard arithmetic hyperbolic manifold and $H \subseteq \bbH^n$ a $\Gamma$-hyperplane.
Then there exists a subgroup of finite index $\Gamma^\prime \leqslant \Gamma$ that retracts onto ${\rm Stab}_{\Gamma^\prime}(H)$, that is, there is a group homomorphism
\[
    {\rm retr}: \Gamma^\prime \to {\rm Stab}_{\Gamma^\prime}(H)
    \quad \text{such that} \quad
    {\rm retr}|_{{\rm Stab}_{\Gamma^\prime}(H)}={\rm id}_{{\rm Stab}_{\Gamma^\prime}(H)}.
\]
Moreover, ${\rm Stab}_{\Gamma^\prime}(H) \backslash H$ is a standard arithmetic hyperbolic manifold, and the natural map 
\[
    {\rm Stab}_{\Gamma^\prime}(H) \backslash H \longrightarrow \Gamma^\prime \backslash \bbH^n 
\]
is an embedding whose image agrees with the projection of $H \subseteq \bbH^n$ to $\Gamma^\prime \backslash \bbH^n$.
\end{thm}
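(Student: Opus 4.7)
The plan is to assemble the statement from the main theorem of \cite{BHW11} combined with Theorem \ref{subgroupsep}. The proof naturally breaks into two independent parts: first produce a finite-index subgroup admitting the desired retraction onto the hyperplane stabilizer, and then shrink it further so that the induced totally geodesic immersion of the quotient of $H$ becomes an embedding.

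For the retraction, the central input from \cite{BHW11} is that the standard arithmetic lattice $\Gamma$ is virtually compact special. Concretely, $\Gamma$ acts properly and cocompactly on a CAT(0) cube complex constructed from (a $\Gamma$-invariant subcollection of) $\Gamma$-hyperplanes in $\bbH^n$, and after passing to a finite-index subgroup the resulting action is cospecial in the sense of Haglund--Wise. Their canonical completion and retraction machinery then exhibits each hyperplane stabilizer as a virtual retract. Applied to the cubical hyperplane associated with $H$, this yields a finite-index subgroup $\Gamma_1 \leqslant \Gamma$ together with a group-theoretic retraction $\Gamma_1 \to {\rm Stab}_{\Gamma_1}(H)$.

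For the embedding, observe that the natural totally geodesic immersion ${\rm Stab}_{\Gamma_1}(H) \backslash H \to \Gamma_1 \backslash \bbH^n$ fails to be injective exactly when $\gamma H \cap H \neq \emptyset$ for some $\gamma \in \Gamma_1 \setminus {\rm Stab}_{\Gamma_1}(H)$. Since ${\rm Stab}_\Gamma(H)$ acts cocompactly on $H$ and $\Gamma$ acts properly discontinuously on $\bbH^n$, only finitely many ${\rm Stab}_\Gamma(H)$-double cosets of such $\gamma$ arise; pick representatives $\gamma_1, \dots, \gamma_k$. By Theorem \ref{subgroupsep} and the variant noted after its statement, for each $i$ there is a finite-index subgroup of $\Gamma_1$ that contains ${\rm Stab}_{\Gamma_1}(H)$ but not $\gamma_i$. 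Taking $\Gamma^\prime$ to be the intersection of these finitely many subgroups gives a finite-index $\Gamma^\prime \leqslant \Gamma_1$ for which the immersion in question is an embedding; because $\Gamma^\prime \leqslant \Gamma_1$, the retraction from the previous step restricts to a retraction $\Gamma^\prime \to {\rm Stab}_{\Gamma^\prime}(H)$.

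Finally, ${\rm Stab}_{\Gamma^\prime}(H) \backslash H$ is a standard arithmetic hyperbolic manifold in its own right: the stabilizer ${\rm Stab}_\Gamma(H)$ is an arithmetic lattice of simplest type in the isometry group of $H \cong \bbH^{n-1}$, coming from the restriction of scalars of the anisotropic orthogonal group of the restriction of the ambient quadratic form to the totally real subspace orthogonal to $H$, which inherits its totally real and anisotropic structure from $\mathbf{G}$. The main obstacle of the entire argument is the retraction step, which depends on the full force of the virtual specialness theorem of Bergeron--Haglund--Wise; once subgroup separability is in hand, the finite-index argument producing the embedding is comparatively routine.
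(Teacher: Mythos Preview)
Your proposal is correct and follows essentially the same route as the paper: invoke \cite{BHW11} to obtain a finite-index subgroup in which the hyperplane stabilizer is a retract, then use subgroup separability (Theorem~\ref{subgroupsep}) together with a finiteness argument to pass to a further finite-index subgroup in which the totally geodesic immersion becomes an embedding, and finally restrict the retraction. The only cosmetic difference is that the paper phrases the finiteness step via a compact fundamental domain $D\subset H$ (finitely many $\gamma$ with $\gamma D\cap D\neq\emptyset$) rather than via double cosets, and cites \cite[Theorems~1.2 and~1.4]{BHW11} directly instead of unpacking the virtual specialness mechanism.
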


The first half of \Cref{thm: BHW retraction} is \cite[Theorem 1.2]{BHW11}. So it remains to explain why the second part easily follows from the results of \cite{BHW11}.

\begin{proof}
By \cite[Theorem 1.2]{BHW11} there exists a torsion free congruence subgroup $\Gamma^\prime \leqslant \Gamma$ of finite index. Note that,
for any $\Gamma$-hyperplane $H\subseteq \mathbb{H}^n$, the stabilizer ${\rm Stab}_{\Gamma^\prime}(H)$ of 
$H$ in $\Gamma^\prime$ is a congruence subgroup of the arithmetic group 
${\rm Stab}_\Gamma(H)$. 

Appealing to \cite[Theorem 1.4]{BHW11} we may assume, after possibly passing to a further finite index congruence subgroup, that $\Lambda={\rm Stab}_{\Gamma^{\prime}}(H)$ is a virtual retract of 
$\Gamma^{\prime}$, that is, there exists a finite index subgroup
$Q \leqslant \Gamma^{\prime}$ containing $\Lambda$ and a homomorphism $Q\to \Lambda$ that is the identity when restricted to $\Lambda$.

Now $\Lambda\backslash H$ is a compact standard arithmetic manifold, and the natural map $\Lambda \backslash H\to Q\backslash \mathbb{H}^n$ induced by the inclusion $H \subseteq \bbH^n$ is an immersion. By \Cref{subgroupsep} and
the following remark,
the subgroup $\Lambda$ is separable in $Q$. 

If the immersion $\Lambda\backslash H \to Q \backslash \bbH^n$ is not an embedding, then the hyperplane $H$ is not precisely invariant
under $Q$, that is, there exist $\gamma\in Q \setminus \Lambda$ such that
\begin{equation}\label{eq: no embedding 1}
	\gamma(H)\cap H\neq \emptyset 
	\quad \text{but} \quad
	\gamma(H)\neq H.
\end{equation}
Then $\gamma(H)\cap H$ is the intersection of two totally geodesic hyperplanes
and hence it is a totally geodesic submanifold of $H$ of codimension one.
As the action of $\Lambda$ on $H$ is cocompact, there exists a compact fundamental
domain $D \subseteq H$ for the action of $\Lambda$ on $H$. For any 
$\gamma \in Q \setminus \Lambda$ satisfying (\ref{eq: no embedding 1}) there exists, by precomposing 
with a suitable element from $\Lambda$, an element $\gamma^\prime \in Q \setminus \Lambda$ such that
\begin{equation}\label{eq: no embedding 2}
	\gamma^\prime(H)\cap H\neq \emptyset 
	\quad \text{and} \quad
	\gamma^\prime (D) \cap  D \neq 0.
\end{equation}

Since the action of $Q$ on $\bbH^n$ is discrete and $D$ is compact, there exist only finitely many 
elements in $Q \setminus \Lambda$ satisfying (\ref{eq: no embedding 2}). 
Keeping in mind that $\Lambda$ is separated in $Q$, we can find a finite
index subgroup $Q^\prime \leqslant Q$ which contains $\Lambda$ but does not contain any of the elements satisfying (\ref{eq: no embedding 2}), and hence also no element satisfying (\ref{eq: no embedding 1}). Then the manifold $\Lambda\backslash H$ is embedded in $Q^\prime\backslash \mathbb{H}^n$. 
Furthermore, the restriction of the retraction $Q\to \Lambda$ to $Q^\prime$ defines a retraction 
$Q^\prime \to \Lambda$. This completes the proof.
\end{proof}

\section{Good totally geodesic submanifolds of codimension two}\label{sec: good submanifolds}

The goal of this section is to prove the following proposition.

\begin{prop}[Codimension two submanifolds]\label{prop: good submanifolds}
For each $n \geq 4$ and any standard arithmetic hyperbolic manifold $M$, there is 
a sequence of finite covers  
$(M_k)_{k \in \bbN}$ of $M$ containing closed embedded totally geodesic 
submanifolds $\Sigma_k \subset H_k\subset M_k$ with the following properties:
    \begin{enumerate}[(i)]
        \item The manifolds $\Sigma_k$ are all isometric, and they are of codimension 2.
        \item $\Sigma_k$ is null-homologous in the embedded connected hypersurface $H_k\subset M_k$;
        \item $\Sigma_k$ has at most two connected components;
        \item We have
        \[
            \lim_{k \to \infty}\frac{{\rm diam}(\Sigma_k)}{R_k^{\nu}}=0,
        \]
        where $R_k^\nu$ is the normal injectivity radius of $\Sigma_k \subseteq M_k$ and, by abuse of notation, $\diam(\Sigma_k)$ is the maximum of the diameters of the connected components of $\Sigma_k$.
    \end{enumerate}
\end{prop}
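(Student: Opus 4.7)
The plan is to construct a prototype configuration $\Sigma_0 \subset H_0$ inside a single finite cover $M_0$ of $M$, and then to exhibit a tower of finite covers $M_k \to M_0$ in which this configuration reappears isometrically but with larger and larger normal injectivity radius. The first step is an iterated application of \Cref{thm: BHW retraction}. Choose a $\Gamma$-hyperplane $H \subset \bbH^n$ and let $\Gamma_1 \leqslant \Gamma$ be a finite-index subgroup retracting via $\rho_1$ onto $\Lambda_1 := \mathrm{Stab}_{\Gamma_1}(H)$, with embedded hypersurface $\bar H := \Lambda_1\backslash H \hookrightarrow \Gamma_1\backslash \bbH^n$. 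Since $\bar H$ is itself a standard arithmetic hyperbolic $(n-1)$-manifold, I apply \Cref{thm: BHW retraction} once more inside $H \cong \bbH^{n-1}$ to a $\Lambda_1$-hyperplane $S \subset H$, producing a finite-index $\Lambda_1' \leqslant \Lambda_1$ that retracts via $\rho_2$ onto $\Lambda_2 := \mathrm{Stab}_{\Lambda_1'}(S)$ together with an embedded totally geodesic hypersurface $\bar S := \Lambda_2 \backslash S \hookrightarrow \Lambda_1'\backslash H$. Setting $\Gamma_2 := \rho_1^{-1}(\Lambda_1')$ gives a finite-index subgroup of $\Gamma_1$ with $\mathrm{Stab}_{\Gamma_2}(H) = \Lambda_1'$, and the same precise-invariance argument as in the proof of \Cref{thm: BHW retraction} shows that $\Lambda_1'\backslash H$ remains embedded in $\Gamma_2\backslash \bbH^n$. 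Hence $\bar S$ is an embedded totally geodesic codimension-$2$ submanifold of $\Gamma_2\backslash\bbH^n$, and $\rho := \rho_2 \circ \rho_1|_{\Gamma_2}$ is a retraction $\Gamma_2 \twoheadrightarrow \Lambda_2$.

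After passing to orientation index-two subgroups of each $\Lambda_i$ if necessary, both $\bar H' := \Lambda_1'\backslash H$ and $\bar S$ become orientable, so $\bar S$ is two-sided in $\bar H'$. The mod-$2$ Poincar\'e dual of $[\bar S]$ is then a class $\phi \in H^1(\bar H';\bbZ/2) = \mathrm{Hom}(\Lambda_1',\bbZ/2)$ whose restriction to $\Lambda_2$ vanishes, because the free homotopy class of any $\gamma \in \Lambda_2$ is represented by a loop in $\bar S$ which can be pushed off $\bar S$ along its normal bundle, giving zero intersection number with $\bar S$. Let $\Lambda_1'' := \ker\phi$, so $\Lambda_2 \leqslant \Lambda_1''$ has index one or two in $\Lambda_1'$. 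In the associated cover $p: \bar H'' := \Lambda_1''\backslash H \to \bar H'$, the preimage of $\bar S$ consists of either $\bar S$ itself (if $\phi = 0$) or exactly two disjoint isometric copies $\bar S_1 \sqcup \bar S_2$ (if $\phi \neq 0$). In both cases $p^\ast \phi = 0$, so $[p^{-1}(\bar S)] = 0$ in $H_{n-2}(\bar H'';\bbZ/2)$; since $p^{-1}(\bar S)$ is two-sided and orientable, mod-$2$ vanishing means it separates $\bar H''$ into two open regions and therefore bounds. Orienting $p^{-1}(\bar S)$ as the oriented boundary of one such region produces an integer cycle representing $0 \in H_{n-2}(\bar H'';\bbZ)$, which establishes (ii) and (iii). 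The base cover is then $M_0 := \rho_1^{-1}(\Lambda_1'')\backslash\bbH^n$, containing $\Sigma_0 := p^{-1}(\bar S) \subset \bar H'' \subset M_0$.

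For (i) and (iv) I would build the tower as follows. The retraction $\rho$ descends to a retraction $\pi_1(M_0)\twoheadrightarrow \Lambda_2$, which I still call $\rho$. For any nested family $N_k \leqslant \Lambda_2$ of finite-index subgroups with $\bigcap_k N_k = \{e\}$, the pullbacks $\pi_1(M_k) := \rho^{-1}(N_k)$ form a nested sequence of finite-index subgroups of $\pi_1(M_0)$ each containing $\Lambda_2$. Hence $M_k := \pi_1(M_k)\backslash\bbH^n$ is a finite cover of $M_0$ in which $\bar H''$ and $\Sigma_0$ remain embedded and isometric to the originals, giving (i). To force $R_k^\nu \to \infty$, note that the normal injectivity radius of $\Sigma_k$ in $M_k$ is bounded below by half the hyperbolic distance from a fixed compact fundamental domain $D \subset S$ for $\Lambda_2$ to its translates $\gamma D$ with $\gamma \in \pi_1(M_k) \setminus \Lambda_2$. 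Combining residual finiteness of $\pi_1(M_0)$ with separability of $\Lambda_2$ in $\pi_1(M_0)$ (immediate from the retraction $\rho$, or from \Cref{subgroupsep} and the following remark), I can refine the $N_k$ so that $\pi_1(M_k)$ avoids every element of the finite set of $\gamma \in \pi_1(M_0)\setminus\Lambda_2$ moving $D$ into its radius-$k$ tubular neighborhood in $\bbH^n$. Then $R_k^\nu \to \infty$, while $\mathrm{diam}(\Sigma_k) = \mathrm{diam}(\Sigma_0)$ stays constant, so (iv) follows.

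The most delicate point is Step 2, where I upgrade mod-$2$ separation to integer null-homology via the observation that a separating two-sided codimension-$1$ submanifold automatically bounds an oriented region. I expect the remaining obstacle to be bookkeeping: each orientation or double cover taken along the way must be pulled back to the ambient group by $\rho_1$ in a way that preserves all of the embedded structure built in Step 1, so that Step 3 produces a genuinely infinite tower in which the fixed $\Sigma_0$ sits isometrically at every level with the desired normal injectivity behavior.
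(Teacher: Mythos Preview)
Your overall strategy---iterate the Bergeron--Haglund--Wise retraction to produce $\Sigma \subset H$, pass to a double cover of $H$ via the mod-$2$ intersection class to force null-homology, and use separability to push the normal injectivity radius to infinity---is essentially the paper's, with the last two steps swapped. But the tower construction in your Step~3 contains a genuine error, and the reordering itself creates a gap.

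You claim that for finite-index $N_k \leqslant \Lambda_2$ the pullbacks $\rho^{-1}(N_k)$ each contain $\Lambda_2$. This is false: since $\rho|_{\Lambda_2} = \mathrm{id}$, one has $\rho^{-1}(N_k) \cap \Lambda_2 = N_k$, so $\Lambda_2 \not\leqslant \rho^{-1}(N_k)$ as soon as $N_k$ is proper. Pulling back along the retraction \emph{shrinks} $\Lambda_2$ (and $\Lambda_1''$) rather than preserving them, so neither $\Sigma_0$ nor $\bar H''$ lifts isometrically to your $M_k$; both get replaced by finite covers, destroying (i) and the fixed diameter in (iv).

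The correct mechanism is the one you mention parenthetically: use separability of $\Lambda_2$ (\Cref{subgroupsep}) directly to produce finite-index $\pi_1(M_k) \leqslant \pi_1(M_0)$ containing $\Lambda_2$ and avoiding the finitely many $\gamma \notin \Lambda_2$ moving $D$ within distance $k$. But here your reordering bites. Normal geodesics to $\Sigma_0$ can run inside $\bar H''$, so among the bad $\gamma$ you must exclude are elements of $\Lambda_1'' \setminus \Lambda_2$; once these are removed, $\bar H''$ no longer lifts isometrically to $M_k$. You then have to re-examine whether the lift of $\Sigma_0$ to the \emph{new} hypersurface $H_k$ (a nontrivial cover of $\bar H''$) is still null-homologous there with at most two components, and the bounding region you built in Step~2 does not obviously survive this passage. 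The paper handles this by doing the steps in the other order: first separate $\Gamma_\Sigma$ to enlarge the normal injectivity radius (Step~2), then invoke the stronger input from \cite[Theorem~1.4]{BHW11} that every geometrically finite subgroup of $\Gamma$ is a virtual retract to obtain a \emph{fresh} retraction onto the new hypersurface stabilizer at each stage, and only then perform the double-cover trick (Step~3). You never invoke this virtual-retract property, and without it your argument as written does not close.
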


Before we come to the proof of \Cref{prop: good submanifolds}, we first show how it can be used to control the $L^2$-norm of the approximate Einstein metrics.

Namely, let $(M_k)_{k \in \bbN}$ and $\Sigma_k \subseteq M_k$ be as in \Cref{prop: good submanifolds}. Fix an integer $d \geq 2$, and denote by $X_k$ the cyclic $d$-fold cover of $M_k$ branched along $\Sigma_k$. Set 
\begin{equation}\label{eq: gluing parameters}
    U_{k;\rm max}:=\cosh(R_k^\nu)
   \quad \text{and} \quad
   U_{k;\rm glue}:=\left(U_{k;\rm max} \right)^{\frac{1}{2}}.
\end{equation}
Let $\bar{g}_k$ be the approximate Einstein metric on $X_k$ given by \Cref{prop: Properties of the approximate Einstein metric}. Then we easily obtain the following from \Cref{prop: Properties of the approximate Einstein metric} and \Cref{prop: good submanifolds}.

\begin{cor}[Small $L^2$-norm]\label{cor: small L2 norm}
For the $L^2$-norm of $\Ric(\bar{g}_k)+(n-1)\bar{g}_k$ we have
\[
    \int_{X_k} |\Ric(\bar{g}_k)+(n-1)\bar{g}_k|^2(x) \, d\vol_{\bar{g}_k}(x) \xrightarrow{k \to \infty}0.
\]
\end{cor}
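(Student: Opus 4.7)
The plan is a direct estimation using the sup-norm bound in Proposition~\ref{prop: Properties of the approximate Einstein metric}(i) together with the volume bound in (iv), then exploiting that the $\Sigma_k$ are all isometric.

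First, since the tensor $\Ric(\bar{g}_k)+(n-1)\bar{g}_k$ is supported in the annular region $A_k := \{\tfrac{1}{2}U_{k;\rm glue} < u < U_{k;\rm glue}\}$ by Proposition~\ref{prop: Properties of the approximate Einstein metric}(ii), we have the crude bound
\[
\int_{X_k} |\Ric(\bar{g}_k)+(n-1)\bar{g}_k|^2\, d\vol_{\bar{g}_k}
\;\leq\; \bigl\|\Ric(\bar{g}_k)+(n-1)\bar{g}_k\bigr\|_{C^0(X_k,\bar{g}_k)}^{\,2}\cdot \vol_{\bar{g}_k}(A_k).
\]
Applying parts (i) and (iv) of Proposition~\ref{prop: Properties of the approximate Einstein metric} (with $U=U_{k;\rm glue}$), this yields a constant $C=C(n,d)$ such that
\[
\int_{X_k} |\Ric(\bar{g}_k)+(n-1)\bar{g}_k|^2\, d\vol_{\bar{g}_k}
\;\leq\; C\, U_{k;\rm glue}^{-2(n-1)}\cdot U_{k;\rm glue}^{n-1}\,\vol_{n-2}(\Sigma_k,g_{\rm hyp})
\;=\; C\,U_{k;\rm glue}^{-(n-1)}\,\vol_{n-2}(\Sigma_k,g_{\rm hyp}).
\]

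Now by Proposition~\ref{prop: good submanifolds}(i), the submanifolds $\Sigma_k$ are all isometric, so $\vol_{n-2}(\Sigma_k,g_{\rm hyp})$ equals a constant $V_0$ independent of $k$. Moreover, by the choice (\ref{eq: gluing parameters}), $U_{k;\rm glue}=\cosh(R_k^\nu)^{1/2}$, so it suffices to check that $R_k^\nu\to\infty$. This is immediate from Proposition~\ref{prop: good submanifolds}(iv): the ratio $\diam(\Sigma_k)/R_k^\nu$ tends to zero while $\diam(\Sigma_k)$ is bounded below by a positive constant (again because the $\Sigma_k$ are mutually isometric and of positive dimension $n-2\geq 2$). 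Hence $U_{k;\rm glue}\to\infty$, and the right-hand side of the displayed estimate tends to $0$.

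There is no real obstacle here; the proof is a one-line computation once the two main ingredients (the pointwise bound on $\Ric(\bar{g}_k)+(n-1)\bar{g}_k$ and the volume bound on the gluing annulus) are in place. The only thing to be careful about is to match the power of $U_{k;\rm glue}$ produced by the support volume with the power coming from the squared sup-norm, so that the resulting exponent $-(n-1)$ is negative and large enough to kill the (bounded) factor $\vol_{n-2}(\Sigma_k)$.
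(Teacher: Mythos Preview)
Your proof is correct. The structure --- bound the integral by the squared sup-norm times the volume of the support annulus, then invoke Proposition~\ref{prop: Properties of the approximate Einstein metric}(i),(ii),(iv) --- is exactly what the paper does. The only genuine difference is in how you dispose of the factor $\vol_{n-2}(\Sigma_k,g_{\rm hyp})$.

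The paper does not use Proposition~\ref{prop: good submanifolds}(i) at this point; instead it bounds $\vol_{n-2}(\Sigma_k)$ by $C\exp\big((n-3)\diam(\Sigma_k)\big)$ and then uses part (iv) to absorb this into $U_{k;\rm glue}^{\varepsilon}$ for any $\varepsilon>0$, arriving at a net power $U_{k;\rm glue}^{-(n-1)+\varepsilon}$. Your route is more direct and arguably cleaner: since part (i) guarantees that the $\Sigma_k$ are mutually isometric, both $\vol_{n-2}(\Sigma_k)$ and $\diam(\Sigma_k)$ are constant in $k$, so you land immediately on $U_{k;\rm glue}^{-(n-1)}$ times a fixed constant, and $R_k^\nu\to\infty$ follows at once from (iv). What the paper's argument buys is robustness: it would still work under the weaker hypothesis that $\diam(\Sigma_k)/R_k^\nu\to 0$ without any control on the individual sizes of the $\Sigma_k$, which is the natural setting of the introduction's equation~\eqref{eq: big normal injectivity radius}. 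Under the proposition as actually stated, however, your shortcut is entirely valid.
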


This is the key estimate that will enable us to use a fairly simple 
perturbation argument for the construction of Einstein metrics. 
%than in \cite{FP20}.

\begin{proof}[Proof of \Cref{cor: small L2 norm}]
    By \Cref{prop: Properties of the approximate Einstein metric}(i),(ii) the tensor $\Ric(\bar{g}_k)+(n-1)\bar{g}_k$ is supported in the region $\big\{\frac{1}{2}U_{k; \rm glue} < u < U_{k; \rm glue}\big\}$ and it is uniformly bounded from above by
    \begin{equation}\label{eq: C^0 bound of trace free ricci}
        ||\Ric(\bar{g}_k)+(n-1)\bar{g}_k||_{C^0(X_k,\bar{g}_k)} \leq CU_{k;\rm glue}^{-(n-1)}
    \end{equation}
    It follows from \Cref{prop: good submanifolds}(iii),(iv) and the definition (\ref{eq: gluing parameters}) of $U_{k; \rm glue}$ that for all $\varepsilon > 0$ and $k \geq k_0(\varepsilon)$ large enough we have
    \begin{equation*}
        \vol_{n-2}(\Sigma_k,g_{\rm hyp})\leq C\exp\big((n-3)\diam(\Sigma_k)\big) 
        \leq C\exp\left(\frac{1}{2}\varepsilon R_k^\nu \right)
        \leq CU_{k; \rm glue}^\varepsilon.
    \end{equation*}
    Together with the volume bound in \Cref{prop: Properties of the approximate Einstein metric}(iv) this implies
    \begin{equation}\label{eq: volume of gluing region}
        \vol_n\left(\left\{\frac{1}{2}U_{k; \rm glue} < u < U_{k; \rm glue} \right\},\bar{g}\right) 
            \leq
        C U_{k; \rm glue}^{n-1}\vol_{n-2}(\Sigma,g_{\rm hyp})
            \leq
        C U_{k; \rm glue}^{n-1+\varepsilon}.
    \end{equation}
    Combining (\ref{eq: C^0 bound of trace free ricci}) and (\ref{eq: volume of gluing region}) implies the desired estimate given that we choose $\varepsilon < n-1$.
\end{proof}

We now come to the proof of \Cref{prop: good submanifolds}.

\begin{proof}[Proof of \Cref{prop: good submanifolds}]
The proof of \Cref{prop: good submanifolds} is divided into three steps.

\smallskip\noindent
\textit{Step 1.} Let $M$ be a standard arithmetic hyperbolic manifold. 
Because of \Cref{thm: BHW retraction} and \cite[Theorem 1.4]{BHW11}, after passing to a finite cover, we may assume that
$M=\Gamma\backslash \mathbb{H}^n$ where $\Gamma$ is a torsion free cocompact lattice and that there exists a $\Gamma$-hyperplane $\tilde{H}\subseteq \mathbb{H}^n$ 
with the following properties:
\begin{enumerate}
\item ${\rm Stab}_{\Gamma}(\tilde{H})\backslash \tilde{H}$ is a standard arithmetic hyperbolic manifold which is 
embedded in $\Gamma\backslash \mathbb{H}^n$;
\item There is a retraction ${\rm retr}:\Gamma \to {\rm Stab}_{\Gamma}(\tilde{H})$;
\item Any geometrically finite subgroup of $\Gamma$ is a virtual retract. 
\end{enumerate}

Since ${\rm Stab}_{\Gamma}(\tilde{H})\backslash \tilde{H}$ is a standard arithmetic hyperbolic manifold, there
exists a ${\rm Stab}_{\Gamma}(\tilde{H})$-hyperplane $\tilde{\Sigma}\subseteq \tilde{H}$ and a subgroup 
$Q \leqslant {\rm Stab}_{\Gamma}(\tilde{H})$ of finite index such that 
\[
	\Sigma={\rm Stab}_{Q}(\tilde{\Sigma})\backslash \tilde{\Sigma}
\] 
is a standard arithmetic hyperbolic manifold embedded in 
$Q\backslash H$.

The preimage $\Gamma^\prime:={\rm retr}^{-1}(Q)$ of $Q \leqslant  {\rm Stab}_{\Gamma}(\tilde{H})$ under the retraction ${\rm retr}: \Gamma \to  {\rm Stab}_{\Gamma}(\tilde{H})$ is a finite index subgroup of $\Gamma$. Note that ${\rm Stab}_{\Gamma^\prime}(\tilde{H})=\Gamma^\prime \cap {\rm Stab}_{\Gamma}(\tilde{H})=Q$, and so the retraction of $\Gamma$ restricts to a retraction ${\rm retr}:\Gamma^\prime \to {\rm Stab}_{\Gamma^\prime}(\tilde{H})$. 
Moreover, any geometrically finite subgroup of $\Gamma^\prime$ is still a virtual retract of $\Gamma^\prime$.

Therefore, we have obtained a finite index subgroup $\Gamma^\prime \leqslant \Gamma$ such that ${\rm Stab}_{\Gamma^\prime}(\tilde{\Sigma}) \backslash \tilde{\Sigma}$, ${\rm Stab}_{\Gamma^\prime}(\tilde{H}) \backslash \tilde{H}$ and $\Gamma^\prime \backslash \bbH^n$ are all standard arithmetic hyperbolic manifolds that are smoothly embedded in each other and so that (2),(3) above still hold for $\Gamma^\prime$.

For ease of notation we will from now on replace $\Gamma^\prime$ by $\Gamma$ in our notations and put 
$M=\Gamma\backslash \mathbb{H}^n$. Furthermore we put 
\[
    \Gamma_{\Sigma}:={\rm Stab}_{\Gamma}(\tilde{\Sigma}),
    \quad
    \Sigma:=\Gamma_{\Sigma} \backslash \tilde{\Sigma},
    \quad
    \Gamma_H:={\rm Stab}_{\Gamma}(\tilde{H}),
    \quad   H:=\Gamma_H\backslash \tilde{H}.
\]
This notation is slightly different than in \Cref{subsec: subgroup separability}, where we used $H$ to denote a hyperplane in $\bbH^n$. We hope that this leads to no confusion.

\smallskip\noindent
\textit{Step 2.} Let $R >0$ be arbitrary. We will now show that one can pass to a finite-sheeted cover $M_R \to M$ that keeps $\Sigma$ 
fixed but so that the normal injectivity radius radius of $\Sigma \subseteq M_R$ is at least $R$. 
To achieve this we will exploit the subgroup separability from Theorem \ref{subgroupsep}.

\smallskip\noindent
\textit{Proof of Step 2.} We first make the following observation: If the normal injectivity radius of $\Sigma \subseteq M$ is at less than $R$, then there exists $\gamma \in \Gamma$ such that
\begin{equation}\label{eq: deck group when small normal injecvivity radius}
    d_{\mathbb{H}^n}(\gamma \cdot \Tilde{x}_0,\Tilde{x}_0) \leq 2\big(R+\diam(\Sigma)\big)
    \quad \text{and} \quad
    \gamma \notin \Gamma_{\Sigma},
\end{equation}
where $\Tilde{x}_0 \in \tilde{\Sigma}$ is some basepoint. Indeed, if the normal injectivity radius of $\Sigma \subseteq M$ is less than $R$, 
then  there exists a geodesic $\sigma:[0,1] \to M$ of length at most $2R$ such that
\[
    \sigma(i) \in \Sigma 
    \quad \text{and} \quad
    \sigma^\prime(i) \perp T_{\sigma(i)}\Sigma \quad \text{for } i=0,1.
\]
Let $x_0 \in \Sigma$ be the projection of the chosen basepoint 
$\Tilde{x}_0 \in \tilde \Sigma$. Choose a distance minimizing geodesic $\tau_i$ in $\Sigma$ from $x_0$ to $\sigma(i)$. Then the concatenation $\tau_0 \cdot \sigma \cdot \tau_1^{-1}$ is a loop based at $x_0$ of length at most $2(R+\diam(\Sigma))$. Clearly, this loop is \textit{not} homotopic to a loop in $\Sigma$. This proves the existence of an element $\gamma \in \pi_1(M,x_0) \cong \Gamma$ 
satisfying (\ref{eq: deck group when small normal injecvivity radius}).

Note that, for $R >0$ fixed, there are only finitely many elements $\gamma \in \Gamma$ satisfying the conditions in (\ref{eq: deck group when small normal injecvivity radius}). Therefore, 
by Theorem \ref{subgroupsep}, there is a finite index subgroup $\Gamma^\prime$ of $\Gamma$ containing $\Gamma_\Sigma$
such that $\gamma \notin \Gamma^\prime$
for all $\gamma\in \Gamma$ satisfying (\ref{eq: deck group when small normal injecvivity radius}). 

Since $\Gamma^\prime$ is a subgroup of $\Gamma$ of finite index, we know that ${\rm Stab}_{\Gamma^\prime}(\tilde{H})$ is a subgroup of finite index in 
${\rm Stab}_\Gamma(\tilde{H})$ and hence it is a cocompact torsion free lattice. In particular, ${\rm Stab}_{\Gamma^\prime}(\tilde{H})$ is a geometrically finite subgroup of $\Gamma$. Thus from property (3) in Step 1 we obtain a finite index subgroup $Q \leqslant \Gamma$ such that there is a retraction ${\rm retr}^\prime:Q \to {\rm Stab}_{\Gamma^\prime}(\tilde{H})$. As ${\rm Stab}_{\Gamma^\prime}(\tilde{H})={\rm Stab}_{\Gamma^\prime \cap Q}(\tilde{H})$, we obtain a retraction ${\rm retr}^\prime:\Gamma^\prime \cap Q \to {\rm Stab}_{\Gamma^\prime \cap Q}(\tilde{H})$ by restriction. Moreover, $\Gamma_\Sigma \leqslant {\rm Stab}_{\Gamma^\prime}(\tilde{H})$ implies $\Gamma_{\Sigma} \leqslant \Gamma^\prime \cap Q$. The finite cover $M^\prime \to M$ associated to $\Gamma^\prime \cap Q$ has the desired properties. For ease of notation, we will in the following simply write $\Gamma^\prime$ for $\Gamma^\prime \cap Q$. This completes the proof of Step 2.
%\qed
\hfill$\blacksquare$

The construction in Step 2 yields a finite cover $M^\prime$ of $M$ and 
connected totally geodesic submanifolds $\Sigma \subseteq 
H^\prime \subseteq M^\prime=\Gamma^\prime\backslash \mathbb{H}^n$ 
such that the normal injectivity radius of $\Sigma \subseteq M^\prime$ is larger than an arbitrary multiple of 
$\diam(\Sigma)$. If $\Sigma$ is null-homologous in $H^\prime$ (and hence in $M^\prime$) we are done. So we assume that $\Sigma$ is \textit{not} null-homologous in $H^\prime$.

\smallskip\noindent
\textit{Step 3.}
Finally we show that there is a two-sheeted cover $\hat{M} \to M^\prime$ 
such that the preimage $\hat{\Sigma} \subseteq \hat{M}$ of $\Sigma$ is null-homologous in $\hat{M}$ and 
the preimage $\hat H$ of $H^\prime$ in $\hat M$ is connected.  

\smallskip\noindent
\textit{Proof of Step 3.}
In accordance with Fine--Premoselli \cite[Definition 2.2]{FP20} we say that $\Sigma \subseteq H^\prime$ \textit{separates} 
$H^\prime$ if $H^\prime \setminus \Sigma$ is disconnected. It is well-known that this can be detected algebraically. Namely, $\Sigma$ determines a class $[\Sigma] \in H_{n-2}(H^\prime;\bbZ/2\bbZ)$ and hence, by Poincaré duality, also a homomorphism 
\[
    \rho:\pi_1(H^\prime) \to \bbZ/2\bbZ
\]
that counts the number of intersections mod $2$ of a generic loop with $\Sigma$. Then $\Sigma$ separates $H^\prime$ 
if and only if $\rho$ is trivial (see for example \cite[proof of Lemma 2.3]{FP20}). Therefore, 
if $\rho$ is non-trivial, then the two-sheeted cover 
$\hat{H} \to H^\prime$ associated to $\ker(\rho)$ is connected, 
and the preimage $\hat{\Sigma} \subseteq \hat{H}$ of $\Sigma$ will separate  
$\hat{H}$. In particular, $\hat{\Sigma}$ is null-homologous in $\hat{H}$. Note that 
as $\pi_1(\Sigma)$ is contained in the kernel of $\rho$, the manifold $\hat \Sigma$ has precisely two connected 
components, each of which is isometric to $\Sigma$. 

Precomposing with the retraction ${\rm retr}^\prime:\Gamma^\prime\to {\rm Stab}_{\Gamma^\prime}(\tilde{H})$ we can extend $\rho$ to a homomorphism defined on $\Gamma^{\prime}$. Let 
$\hat\Gamma\leqslant \Gamma^\prime$ be the kernel of this homomorphism.
Then $\hat M=\hat\Gamma\backslash \mathbb{H}^n$ contains 
the two-sheeted cover $\hat{H} \to H^\prime$ of $H$ as an embedded totally geodesic submanifold. Since $\hat{\Sigma}$ is null-homologous in $\hat{H}$, it is also null-homologous in $\hat{M}$. Furthermore, as $\hat \Gamma\leqslant \Gamma^\prime$, the 
normal injectivity radius of $\hat \Sigma$ is at least $R$. 
This completes the proof of Step 3. \hfill$\blacksquare$

Recall that if $\Sigma$ is not homologous to zero then 
$\hat{\Sigma}$ has precisely two connected components, isometric to $\Sigma$, 
and the normal injectivity radius can \textit{not} become smaller. 
Therefore, this completes the proof of \Cref{prop: good submanifolds}.
\end{proof}

\section{Construction of the Einstein metric}\label{sec: Einstein metrics}

Let $M_k$ be a sequence of closed hyperbolic $n$-manifolds with a totally geodesic codimension two submanifold $\Sigma_k$ as in \Cref{prop: good submanifolds}, $X_k$ 
the cyclic $d$-fold covering of $M_k$ branched along $\Sigma_k$, and $\bar{g}_k$ the approximate Einstein metric on $X_k$ given by \Cref{prop: Properties of the approximate Einstein metric}.

The goal of this section is to prove \Cref{main thm}, that is, to show that $X_k$ admits a
negatively curved Einstein metric. By \Cref{Zeros of Phi are Einstein} it suffices to show that the Einstein operator  $\Phi_k=\Phi_{\bar{g}_k}$ defined in (\ref{eq: Def of Phi}) has a zero sufficiently close to the zero section. 
We will achieve this by an application of the Inverse Function Theorem. 

Recall from (\ref{eq: linearisation of Einstein}) that the linearization of the Einstein operator at the background metric $\bar{g}_k$ is given by
\[
    \mathcal{L}=(D\Phi_k)_{\bar{g}_k}=\frac{1}{2}\Delta_L+(n-1)\id.
\]

We will first show in \Cref{subsec: Linearized Einstein is invertible} that $\mathcal{L}$ is an invertible linear operator between suitable Banach spaces. \Cref{subsec: proof of main thm} then contains the proof of \Cref{main thm}.

\subsection{Invertibility of the linearized Einstein operator}\label{subsec: Linearized Einstein is invertible}

It is a classic result of Koiso \cite[Section 3]{Koi78} (also see \cite[Lemma 12.71]{Bes08}) that for a closed Einstein manifold with negative sectional curvature, the operator $\mathcal{L}$ has a uniform $L^2$-spectral gap (only depending on the negative upper curvature bound). By an adaptation of Koiso's argument, the same is also true for the approximate Einstein metrics $\bar{g}_k$.

\begin{lem}[$L^2$-spectral gap]\label{lem: L2 spectral gap}
There exists a constant $\lambda=\lambda(n,d) > 0$ such that for all sufficiently large $k$ we have
\[
    \lambda \int_{X_k}|h|^2 \, d\vol_{\bar{g}_k} \leq \int_{X_k} \langle \mathcal{L}h,h\rangle d\vol_{\bar{g}_k} 
\]
for all $h \in C^2\big(\Sym^2(T^\ast X_k)\big)$.
\end{lem}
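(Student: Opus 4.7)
The plan is to adapt Koiso's spectral-gap argument for closed Einstein manifolds with negative sectional curvature to the approximately Einstein metrics $\bar g_k$. The starting point is the integration-by-parts identity
\[
\int_{X_k}\langle \mathcal{L}h,h\rangle\,d\vol_{\bar g_k}
=\tfrac{1}{2}\int_{X_k}|\nabla h|^2\,d\vol_{\bar g_k}
+\tfrac{1}{2}\int_{X_k}\langle \Ric_{\bar g_k}(h),h\rangle\,d\vol_{\bar g_k}
+(n-1)\int_{X_k}|h|^2\,d\vol_{\bar g_k},
\]
coming from $\Delta_L=\nabla^*\nabla+\Ric(\cdot)$, where $\Ric(\cdot)$ is the Weitzenb\"ock curvature endomorphism on symmetric $(0,2)$-tensors. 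The two inputs from Section~2 are the uniform upper bound $\sec(\bar g_k)\leq -c<0$ from Proposition~2.10(iii) and the $C^0$-smallness of the Einstein error $T_k:=\Ric(\bar g_k)+(n-1)\bar g_k$, namely $\|T_k\|_{C^0}\leq CU_{k;\mathrm{glue}}^{-(n-1)}\to 0$ (Proposition~2.10(i) with $m=0$).

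Writing $\Ric(\bar g_k)=-(n-1)\bar g_k+T_k$ and substituting into the Weitzenb\"ock endomorphism, the $-(n-1)\bar g_k$-part contributes $-2(n-1)h$ which, after the factor $\tfrac12$, cancels the $(n-1)$-shift in $\mathcal L$; this leaves
\[
\int_{X_k}\langle \mathcal{L}h,h\rangle\,d\vol_{\bar g_k}
=\tfrac{1}{2}\int_{X_k}|\nabla h|^2\,d\vol_{\bar g_k}
-\int_{X_k}\langle \mathring R_{\bar g_k}(h),h\rangle\,d\vol_{\bar g_k}
+E_k(h),
\]
where $\mathring R_{\bar g_k}$ is the curvature operator on symmetric $(0,2)$-tensors coming from the Weitzenb\"ock formula, and the remainder satisfies $|E_k(h)|\leq C\|T_k\|_{C^0}\int_{X_k}|h|^2\,d\vol_{\bar g_k}=o(1)\int_{X_k}|h|^2\,d\vol_{\bar g_k}$. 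It therefore suffices to prove a Koiso-type estimate $\tfrac12\int|\nabla h|^2-\int\langle\mathring R_{\bar g_k}h,h\rangle\geq 2\lambda\int|h|^2$ uniformly in $k$ for some fixed $\lambda>0$.

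For the reduced estimate I would split $h=h_0+f\bar g_k$ with $h_0$ trace-free and $f=\tfrac1n\tr_{\bar g_k}h$. The pure-trace piece is straightforward: it contributes $(n-1)\int|f\bar g_k|^2$ from the shift plus a non-negative gradient term, so the shift alone gives a gap of size $n-1$ on this subspace. The trace-free piece is handled by Koiso's argument, which uses that the metric is Einstein with $\sec\leq -c<0$ to produce a uniform lower bound of the form $\tfrac12|\nabla h_0|^2-\langle \mathring R h_0,h_0\rangle\geq c'(n,c)|h_0|^2$ after integration; cross terms between $h_0$ and $f\bar g_k$ are absorbed by Cauchy--Schwarz into the two main contributions.

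The main technical obstacle is that $\sec(\bar g_k)\leq -c$ is a bound only on the restriction of the Riemann tensor to decomposable bivectors, whereas $\mathring R_{\bar g_k}$ depends on the full curvature tensor; for constant curvature $-1$ one computes $\mathring R h_0=h_0$ on trace-free tensors, so the desired pointwise inequality fails outright and one truly needs an integrated estimate. Koiso's argument overcomes this by a further integration by parts exploiting the Einstein condition together with algebraic symmetries of the Riemann tensor, and the adaptation to $\bar g_k$ uses that the metric is Einstein off a region of volume $o(1)$ (Proposition~2.10(ii),(iv)) combined with $\|T_k\|_{C^0}\to 0$ to absorb the residual error into $E_k$. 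The rotational symmetry of the Fine--Premoselli model $g_{a(d)}$ near the branch locus provides the needed explicit algebraic control of $\mathring R$ there, and a continuity argument propagates the estimate across the transition region.
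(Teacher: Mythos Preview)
Your overall strategy is correct and coincides with the argument the paper defers to, namely \cite[Proposition~4.3]{FP20}: run Koiso's integrated Weitzenb\"ock argument, which for closed manifolds needs only the Einstein condition together with the upper bound $\sec\le -c<0$, and treat the failure of the Einstein condition as a perturbation controlled by $\|T_k\|_{C^0}\to 0$. The splitting into pure-trace and trace-free pieces is the right organization; note however that in your reduced expression the $(n-1)$-gap on the pure-trace subspace does not come ``from the shift'' (which you have already cancelled) but from the curvature term itself, since $-\mathring R(f\bar g_k)=(n-1)f\bar g_k+O(\|T_k\|_{C^0})f\bar g_k$.

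Two points in your final paragraph need correction. First, the assertion that $\bar g_k$ is Einstein off a region ``of volume $o(1)$'' is false: by the volume bound in Proposition~\ref{prop: Properties of the approximate Einstein metric}(iv) (cf.\ \eqref{eq: volume of gluing region}) the gluing region has volume of order $U_{k;\mathrm{glue}}^{\,n-1}\vol_{n-2}(\Sigma_k)\to\infty$. This is irrelevant for the spectral gap, because the extra terms arising in Koiso's second Weitzenb\"ock identity from the failure of the Einstein condition are bounded \emph{pointwise} by $C\|T_k\|_{C^0}|h|^2$; after integration they are dominated by $C\|T_k\|_{C^0}\|h\|_{L^2}^2$ and can be absorbed once $\|T_k\|_{C^0}$ is small, independently of the volume of the support of $T_k$. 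Second, no appeal to the rotational symmetry of the model metric or to any ``continuity argument'' across the transition region is needed: the global bound $\sec(\bar g_k)\le -c$ from Proposition~\ref{prop: Properties of the approximate Einstein metric}(iii) and the $C^0$-smallness of $T_k$ already supply all the curvature control Koiso's argument requires, uniformly over $X_k$.
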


For a detailed proof we refer the reader to \cite[Proposition 4.3]{FP20}, which is a bit more general than what we need here.

Fix a H\"older parameter $\alpha \in (0,1)$. We equip $C^{0,\alpha}\big(\Sym^2(T^\ast X_k) \big)$ with the hybrid norm
\begin{equation}\label{eq: hybrid 0-norm}
    ||f||_0:=\max\Big\{||f||_{C^{0,\alpha}(X_k,\bar{g}_k)},||f||_{L^2(X_k,\bar{g}_k)}\Big\}.
\end{equation}
Similarly, we equip $C^{2,\alpha}\big(\Sym^2(T^\ast X_k) \big)$ with the hybrid norm
\begin{equation}\label{eq: hybrid 2-norm}
    ||h||_2:=\max\Big\{||h||_{C^{2,\alpha}(X_k,\bar{g}_k)},||h||_{H^2(X_k,\bar{g}_k)}\Big\},
\end{equation}
where $||\cdot||_{H^2(X_k,\bar{g}_k)}$ is the Sobolev norm
\[
    ||h||_{H^2(X_k,\bar{g}_k)}:=\left(\int_{X_k}|h|^2+|\nabla h|^2+|\Delta h|^2 \, d\vol_{\bar{g}_k}\right)^{\frac{1}{2}}.
\]
Here the H\"older norm of a tensor is defined by the H\"older norm of the coefficients of the tensor in a harmonic chart defined on balls of a priori size (for a detailed account we refer to \cite[Proof of Proposition 2.5]{HJ22}).

Using the $C^0$-estimate from \Cref{lem: Nash-Moser} with the $L^2$-estimate from \Cref{lem: L2 spectral gap}, it is now straightforward to show that $\mathcal{L}$ is invertible (with universal constants).

\begin{prop}[$\mathcal{L}$ is uniformly invertible]\label{prop: L invertible}
There exists a constant $C=C(\alpha,n,d)$ with the following property. For all $k$ sufficiently large, the linearized Einstein operator
\[
	\mathcal{L}: \Big( C^{2,\alpha}\big(\Sym^2(T^*X_k)\big), ||\cdot||_2 \Big) \longrightarrow \Big( C^{0,\alpha}\big( \Sym^2(T^*X_k)\big), ||\cdot||_0 \Big)
\]
is invertible, and 
\[
	||\mathcal{L}||_{\rm op}, ||\mathcal{L}^{-1}||_{\rm op} \leq C,
\]
where \(||\cdot||_0\) resp. \(||\cdot||_2\) is the norm defined in (\ref{eq: hybrid 0-norm}) resp. (\ref{eq: hybrid 2-norm}).
\end{prop}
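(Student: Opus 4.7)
The plan is to combine the uniform bounded geometry of the metrics $\bar g_k$ (implicit in \Cref{prop: Properties of the approximate Einstein metric}), the pointwise $C^0$-estimate of \Cref{lem: Nash-Moser}, the $L^2$-spectral gap of \Cref{lem: L2 spectral gap}, and standard interior Schauder estimates in harmonic charts. The key point is that all of these analytic ingredients provide constants depending only on $n$, $d$ and $\alpha$, once the two-sided curvature bounds and a positive lower bound on the injectivity radius of $\bar g_k$ are known to be uniform in $k$.

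The upper bound $\|\mathcal{L}\|_{\rm op}\le C$ is immediate. Writing $\mathcal{L}h=\tfrac12\nabla^\ast\nabla h+\tfrac12\Ric(h)+(n-1)h$, the Weitzenb\"ock curvature operator has $C^{0,\alpha}$-norm controlled by that of the curvature tensor of $\bar g_k$, which is uniformly bounded. Hence $\|\mathcal{L}h\|_{C^{0,\alpha}}\le C\|h\|_{C^{2,\alpha}}$ and $\|\mathcal{L}h\|_{L^2}\le C\|h\|_{H^2}$, so $\|\mathcal{L}h\|_0\le C\|h\|_2$.

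For the bound on $\mathcal{L}^{-1}$ I would first show that $\mathcal{L}$ is invertible on $L^2$. From \Cref{lem: L2 spectral gap} and Cauchy--Schwarz,
\[
\lambda\|h\|_{L^2}^2\le \langle\mathcal{L}h,h\rangle_{L^2}\le \|\mathcal{L}h\|_{L^2}\|h\|_{L^2},
\]
so $\mathcal{L}$ is injective with $\|h\|_{L^2}\le \lambda^{-1}\|\mathcal{L}h\|_{L^2}$; as $\mathcal{L}$ is formally self-adjoint and elliptic on the closed manifold $X_k$, it is Fredholm of index zero and therefore also surjective. Given $f\in C^{0,\alpha}(\Sym^2(T^\ast X_k))$, let $h$ be the unique solution of $\mathcal{L}h=f$. \Cref{lem: Nash-Moser} then gives
\[
|h|(x)\le C\bigl(\|h\|_{L^2(B(x,\rho))}+\|f\|_{C^0(B(x,\rho))}\bigr)\le C\bigl(\lambda^{-1}\|f\|_{L^2}+\|f\|_{C^{0,\alpha}}\bigr)\le C\|f\|_0.
\]
Interior Schauder estimates in the Jost--Karcher harmonic charts, applied to the rewritten equation $\nabla^\ast\nabla h=2f-2(n-1)h-\Ric(h)$, upgrade this $C^0$-bound to $\|h\|_{C^{2,\alpha}}\le C\|f\|_0$. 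The same rewritten equation yields $\|\nabla^\ast\nabla h\|_{L^2}\le C\|f\|_0$ directly, and integration by parts gives $\|\nabla h\|_{L^2}^2=\langle\nabla^\ast\nabla h,h\rangle_{L^2}\le \|\nabla^\ast\nabla h\|_{L^2}\|h\|_{L^2}\le C\|f\|_0^2$, so $\|h\|_{H^2}\le C\|f\|_0$ and altogether $\|h\|_2\le C\|f\|_0$.

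The main obstacle is verifying that every constant in the argument is genuinely uniform in $k$. The curvature of $\bar g_k$ is bounded in absolute value by a constant depending only on $n$ and $d$, because away from $\Sigma_k$ the metric is hyperbolic and near $\Sigma_k$ it interpolates between the hyperbolic metric and the explicit model Einstein metric $g_{a(d)}$. The injectivity radius of $\bar g_k$ is likewise bounded below independently of $k$: on the hyperbolic region it is at least $\inj(M_k)\ge \inj(M)>0$ since $M_k$ is a cover of $M$, and near $\Sigma_k$ the local geometry is dictated by the $k$-independent model metric. With these uniform geometric bounds, the constants in Jost--Karcher, Nash--Moser and Schauder are all $k$-independent, and the argument above delivers the $k$-uniform constant $C=C(\alpha,n,d)$ claimed in the proposition.
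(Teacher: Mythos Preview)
Your proposal is correct and follows essentially the same route as the paper: the $L^2$-spectral gap (\Cref{lem: L2 spectral gap}) gives the $L^2$/$H^2$ estimate, \Cref{lem: Nash-Moser} converts this into a $C^0$-bound, and Schauder estimates in harmonic charts finish the $C^{2,\alpha}$-control. The only cosmetic difference is that the paper packages everything as a single a priori estimate $\|h\|_2\le C\|\mathcal{L}h\|_0$ and then invokes ``standard arguments'' for surjectivity, whereas you first establish $L^2$-invertibility via Fredholm theory and then bound $\mathcal{L}^{-1}f$ directly; your explicit discussion of the uniform curvature and injectivity-radius bounds is a welcome addition that the paper leaves implicit.
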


\begin{proof}
    It is clear that $||\mathcal{L}||_{\rm op}$ is bounded by a universal constant. It will suffice to prove the a priori estimate
    \(
        ||h||_2 \leq C||\mathcal{L}h||_0 
    \)
    for all $h\in C^{2,\alpha}\big(\Sym^2(T^*X_k)\big)$. Indeed, given the a priori estimate, standard arguments show that $\mathcal{L}$ is surjective; consequently $\mathcal{L}$ is invertible and $||\mathcal{L}^{-1}||_{\rm op} \leq C$ due to the a priori estimate.

    Clearly, $||h||_{L^2(X_k)} \leq C||\mathcal{L}h||_{L^2(X_k)}$ because $\mathcal{L}$ has a uniform $L^2$-spectral gap (\Cref{lem: L2 spectral gap}). Since $\mathcal{L}=\frac{1}{2}\Delta_L+(n-1)\id$, this $L^2$-estimate implies
    \begin{equation}\label{eq: H^2 bound}
        ||h||_{H^2(X_k)} \leq C||\mathcal{L}h||_{L^2(X_k)}.
    \end{equation}
    Moreover, the well-known Schauder estimates (see \cite[Proposition 2.5]{HJ22}) state
    \begin{equation}\label{eq: Schauder}
        ||h||_{C^{2,\alpha}(X_k)} \leq C\Big(||\mathcal{L}h||_{C^{0,\alpha}(X_k)}+||h||_{C^0(X_k)} \Big).
    \end{equation}
    The $C^0$-estimate (\ref{C^0}) together with (\ref{eq: H^2 bound}) yields
    \begin{equation}\label{eq: C^0 by hybrid norm}
        ||h||_{C^0(X_k)} \leq C\Big(||h||_{L^2(X_k)}+||\mathcal{L}h||_{C^0(X_k)} \Big)
        \leq  C\Big(||\mathcal{L}h||_{L^2(X_k)}+||\mathcal{L}h||_{C^0(X_k)}\Big).
    \end{equation}
    Keeping in mind the definitions (\ref{eq: hybrid 0-norm}) and (\ref{eq: hybrid 2-norm}) of the norms $||\cdot||_0$ and $||\cdot||_2$, the desired a priori estimate $||h||_2 \leq C||\mathcal{L}h||_0$ follows by combining (\ref{eq: H^2 bound}), (\ref{eq: Schauder}) and (\ref{eq: C^0 by hybrid norm}).
\end{proof}

\subsection{Proof of the Main Theorem}\label{subsec: proof of main thm}

The goal of this subsection is to present the proof of our main result.

\begin{thm}[Existence of Einstein metrics]\label{thm: existence of einstein metrics}
    For all sufficiently large $k$ there exists a metric $\hat{g}_k$ on $X_k$ such that
    \[
        \Ric(\hat{g}_k)+(n-1)\hat{g}_k=0 
        \quad \text{and} \quad
        \sec(X_k,\hat{g}_k) \leq -c(n,d) < 0.
    \]
    Moreover,
    \[
        ||\hat{g}_k-\bar{g}_k||_{C^{2,\alpha}(X_k,\bar{g}_k)} \xrightarrow{k \to \infty} 0.
    \]
\end{thm}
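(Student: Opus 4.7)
The plan is to solve $\Phi_k(\bar{g}_k+h)=0$ by a Banach fixed-point argument in $\bigl(C^{2,\alpha}(\Sym^2(T^{\ast}X_k)),\|\cdot\|_2\bigr)$. Writing the Taylor expansion
\[
    \Phi_k(\bar{g}_k+h)=\Phi_k(\bar{g}_k)+\mathcal{L}h+Q_k(h),
\]
where $Q_k$ collects all terms quadratic and higher order in $h$ and its first two derivatives, the equation $\Phi_k(\bar{g}_k+h)=0$ becomes the fixed-point problem
\[
    h=T_k(h):=-\mathcal{L}^{-1}\bigl(\Phi_k(\bar{g}_k)+Q_k(h)\bigr).
\]
By \Cref{prop: L invertible} the operator $\mathcal{L}^{-1}$ exists with norm uniformly bounded in $k$.

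Two further ingredients are needed. \emph{First}, the source term is small: since $\bar{g}_k$ is parallel for its own Levi--Civita connection and $\tr_{\bar{g}_k}\bar{g}_k=n$, the Bianchi term $\beta_{\bar{g}_k}(\bar{g}_k)$ vanishes identically, so $\Phi_k(\bar{g}_k)=\Ric(\bar{g}_k)+(n-1)\bar{g}_k$; by \Cref{prop: Properties of the approximate Einstein metric}(i),(ii) this tensor has $C^{0,\alpha}$-norm at most $CU_{k;\rm glue}^{-(n-1)}\to 0$, while by \Cref{cor: small L2 norm} its $L^2$-norm also tends to $0$, so $\|\Phi_k(\bar{g}_k)\|_0\to 0$. \emph{Second}, a uniform quadratic estimate holds: since $\Phi_k$ is a second-order quasilinear differential operator whose coefficients depend smoothly on $\bar{g}_k$, and the background metrics $\bar{g}_k$ enjoy uniform $C^{m,\alpha}$-bounds in harmonic charts of a priori size (a consequence of the explicit interpolation construction of $\bar{g}_k$ and the uniform curvature and injectivity radius bounds underlying \Cref{prop: Properties of the approximate Einstein metric}(iii)), a routine computation yields
\[
    \|Q_k(h_1)-Q_k(h_2)\|_0 \le C\bigl(\|h_1\|_2+\|h_2\|_2\bigr)\|h_1-h_2\|_2
\]
for all $\|h_i\|_2\le\epsilon_0$, with $C$ and $\epsilon_0$ independent of $k$. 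Choosing $r_k:=2\|\mathcal{L}^{-1}\|_{\rm op}\|\Phi_k(\bar{g}_k)\|_0\to 0$, the map $T_k$ becomes a $\tfrac{1}{2}$-contraction on the closed ball of radius $r_k$ in $(C^{2,\alpha},\|\cdot\|_2)$ for all sufficiently large $k$, producing a fixed point $h_k$ with $\|h_k\|_2\le r_k\to 0$.

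To conclude, set $\hat{g}_k:=\bar{g}_k+h_k$; elliptic regularity applied to $\mathcal{L}h_k=-\Phi_k(\bar{g}_k)-Q_k(h_k)$ promotes $h_k$ to a smooth tensor, so $\Phi_k(\hat{g}_k)=0$ and $\|\hat{g}_k-\bar{g}_k\|_{C^{2,\alpha}}\to 0$. To apply \Cref{Zeros of Phi are Einstein} one needs $\sup_{X_k}|\beta_{\bar{g}_k}(\hat{g}_k)|<\infty$, which is automatic by compactness, and $\Ric(\hat{g}_k)\le\lambda\hat{g}_k$ for some $\lambda<0$; since \Cref{prop: Properties of the approximate Einstein metric}(iii) gives $\Ric(\bar{g}_k)\le-(n-1)c\,\bar{g}_k$, the $C^2$-closeness yields $\Ric(\hat{g}_k)\le-\tfrac{(n-1)c}{2}\hat{g}_k$ for $k$ large, so $\Ric(\hat{g}_k)=-(n-1)\hat{g}_k$. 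The same $C^2$-closeness and continuity of sectional curvature in the $2$-jet of the metric give the bound $\sec(X_k,\hat{g}_k)\le-c(n,d)/2$. The main obstacle is the uniform quadratic estimate on $Q_k$: it requires that the harmonic charts and local coefficient bounds controlling the Hölder norms behave uniformly as $k\to\infty$, which is ensured by the explicit structure of $\bar{g}_k$ as a smooth interpolation between the hyperbolic metric and the explicit model Einstein metric, both of which have a priori bounded geometry of all orders.
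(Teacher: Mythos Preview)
Your argument is correct and is essentially the same as the paper's: the paper invokes a quantitative Inverse Function Theorem (with the Lipschitz bound on $g\mapsto (D\Phi_k)_g$ playing the role of your quadratic estimate on $Q_k$), while you unpack this as an explicit Banach contraction---indeed the paper itself notes in the remark following the theorem that the IFT step amounts to iterating the map $h\mapsto h-\mathcal{L}^{-1}\Phi_k(\bar g_k+h)$, which coincides with your $T_k$. The remaining verifications (smallness of $\|\Phi_k(\bar g_k)\|_0$, passage from $\Phi_k(\hat g_k)=0$ to the Einstein equation via \Cref{Zeros of Phi are Einstein}, and the curvature bound by $C^{2,\alpha}$-closeness) match the paper's.
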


Before we come to the proof we point out that, for $k$ sufficiently large, 
the Einstein metric $\hat{g}_k$ can \textit{not} be locally symmetric. 
Indeed, $\sec(\Sigma_k,\bar{g}_k)=-u_{a(d)}^{-2} < -1$ by (\ref{eq: Ansatz for Einstein metric}) and \Cref{lem: cone singularities of Ansatz}. Thus $\sec(\Sigma_k,\hat{g}_k) < -1$ for all $k$ sufficiently large, and so $\hat{g}_k$ can \textit{not} be (real) hyperbolic. Moreover, by construction, the metric $\bar{g}_k$ is hyperbolic outside of a tubular neighborhood of $\Sigma_k$. Hence, outside of a tubular neighborhood of $\Sigma_k$, $\sec(X_k,\hat{g}_k)$ is very close to $-1$, and so  $\hat{g}_k$ can \textit{not} be complex- or quaternionic hyperbolic nor the Cayley plane.

In fact, in \Cref{sec: not locally symmetric} we will show that for a 
slightly restricted choice of the hyperbolic manifolds $M_k$, at most one of the cyclic 
branched coverings $X_k$ can \textit{not} admit any locally symmetric metric.

\begin{proof}
    We equip $C^{k,\alpha}\big(\Sym^2(T^\ast X_k)\big)$ with the norm $||\cdot||_k$ defined in (\ref{eq: hybrid 0-norm}) and (\ref{eq: hybrid 2-norm}) ($k=0,2$); $B(h,r)$ shall denote the balls with respect to these norms. 
    
    Any element in $B(\bar{g}_k,1/2) \subseteq C^{2,\alpha}\big(\Sym^2(T^\ast X_k)\big)$ is a positive definite $(0,2)$-tensor, that is, a Riemannian metric on $X_k$. Let $\Phi_k=\Phi_{\bar{g}_k}$ be the Einstein operator defined in (\ref{eq: Def of Phi}), which we consider as an operator
    \[
        \Phi_k: B(\bar{g}_k,1/2) \subseteq C^{2,\alpha}\big(\Sym^2(T^\ast X_k)\big) \to
        C^{0,\alpha}\big(\Sym^2(T^\ast X_k)\big).
    \]
    Denote by $\mathcal{L}=(D\Phi_k)_{\bar{g}_k}$ the linearization of $\Phi_k$ at the background metric $\bar{g}_k$. By \Cref{prop: L invertible} there exists a universal constant $C_0=C_0(\alpha,n,d)$ such that, for all $k$ sufficiently large, $\mathcal{L}$ is invertible with $||\mathcal{L}||_{\rm op}, ||\mathcal{L}^{-1}||_{\rm op} \leq C_0$. Moreover, by possibly enlarging $C_0$, it is clear that the map $g \mapsto (D\Phi_k)_g$ is $C_0$-Lipschitz. Therefore, applying (a quantitative version of) the Inverse Function Theorem implies that there exist constants $\varepsilon_0=\varepsilon_0(\alpha,n,d) > 0$ and $C=C(\alpha,n,d)$ with the following property: For each $f \in C^{0,\alpha}\big(\Sym^2(T^\ast X_k)\big)$ with $||f-\Phi_k(\bar{g}_k)||_0 \leq \varepsilon_0$ there exists a metric $g_f \in C^{2,\alpha}\big(\Sym^2(T^\ast X_k)\big)$ such that 
    \[
        \Phi_k(g_f)=f 
        \quad \text{and} \quad
        ||g_f-\bar{g}_k||_2 \leq C||f-\Phi_k(\bar{g}_k)||_0.
    \]
    Note that $\Phi_k(\bar{g}_k)=\Ric(\bar{g}_k)+(n-1)\bar{g}_k$. Hence it follows from \Cref{prop: Properties of the approximate Einstein metric}(i) and \Cref{cor: small L2 norm} that $||\Phi_k(\bar{g}_k)||_0 \to 0$ as $k \to \infty$. In particular, for all $k$ sufficiently large, $f=0$ satisfies $||f-\Phi_k(\bar{g}_k)|| \leq \varepsilon_0$. Therefore, there exists a metric $\hat{g}_k$ on $X_k$ such that
    \[
        \Phi_k(\hat{g}_k)=0
        \quad \text{and} \quad
        ||\hat{g}_k-\bar{g}_k||_2 \xrightarrow{k \to \infty}0.
    \]
    In particular, as $\sec(X_k,\bar{g}_k) \leq -c(n,d) < 0$ by \Cref{prop: Properties of the approximate Einstein metric}(iii), also $\sec(X_k,\hat{g}_k) < 0$ for all $k$ sufficiently large. Therefore, $\Phi_k(\hat{g}_k)=0$ implies $\Ric(\hat{g}_k)+(n-1)\hat{g}_k=0$ due to \Cref{Zeros of Phi are Einstein}. This completes the proof.
\end{proof}

For the formulation of the next remark, note that there is a natural action of the cyclic group $C_d$ of order $d$ on the $d$-fold branched cover $X_k$.

\begin{rem}\label{rem: conical Einstein metrics on M}
For all $k$ sufficiently large, the Einstein metric $\hat{g}_k$ on $X_k$ given by \Cref{thm: existence of einstein metrics} is $C_d$-invariant. In particular, for all $k$ sufficiently large (depending on $d$), the hyperbolic manifolds $M_k$ admit negatively curved Einstein metrics with a conical singularity and cone angle $\frac{2\pi}{d}$ along the codimension two submanifold $\Sigma_k \subseteq M_k$.
\end{rem}

\begin{proof}
In the proof of \Cref{thm: existence of einstein metrics}, the Einstein metric $\hat{g}_k$ was the zero of the Einstein operator $\Phi_k$ obtained from an application of the Inverse Function Theorem. Since the Inverse Function Theorem can be proved using the Banach Fixed Point Theorem, $\hat{g}_k$ is of the form $\bar{g}_k+\hat{h}_k$, where $\hat{h}_k$ is a fixed point of the operator
\[
        \Psi_k: C^{2,\alpha}\big(\Sym^2(T^\ast X_k)\big) \to
        C^{2,\alpha}\big(\Sym^2(T^\ast X_k)\big), \, h \mapsto h-\mathcal{L}^{-1}\big(\Phi_k(\bar{g}_k+h)\big).
\]
Using the definition (\ref{eq: Def of Phi}) of the Einstein operator, one can easily check that if a Riemannian metric $g$ on $X_k$ is $\varphi$-invariant for some $\varphi \in {\rm Isom}(X_k,\bar{g}_k)$, then also $\Phi_k(g)$ is $\varphi$-invariant. As the fixed point $\hat{h}_k$ is given by the limit $\lim_{i \to \infty} \Psi_k^i(0)$, this shows that $\hat{h}_k$, and hence $\hat{g}_k$, is ${\rm Isom}(X_k,\bar{g}_k)$-invariant. However, it is apparent from the construction of $\bar{g}_k$ explained in \Cref{subsec: approximate Einstein} that $\bar{g}_k$ is $C_d$-invariant. Therefore, also $\hat{g}_k$ is $C_d$-invariant.
\end{proof}

\section{Einstein manifolds not homeomorphic to locally symmetric spaces}\label{sec: not locally symmetric}

By \Cref{thm: existence of einstein metrics} there exist negatively curved Einstein metrics on some branched covers $X$ of 
certain hyperbolic manifolds $M$. The construction is valid for all covering degrees smaller than a number 
depending on $M$. As $M$ varies, this maximal covering degree can be arbitrarily large. 
The goal of this section is to show that 
for any dimension $n\geq 4$, we find infinitely many such branched coverings which 
are not homeomorphic to a locally symmetric manifold. 

We start with the following basic observation. 

\begin{prop}\label{prop: no complex hyperbolic metric}
Let $M$ be a closed hyperbolic $n$-manifold and $\Sigma \subseteq M$ a closed null-homologous totally geodesic submanifold of codimension two. 
Then the cyclic $d$-fold covering $X$ of $M$ 
branched along $\Sigma$ is \emph{not} homeomorphic to any locally symmetric manifold, except possibly hyperbolic manifolds.
\end{prop}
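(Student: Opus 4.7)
The plan has two stages: first reduce the possible target $Y$ to a rank-one symmetric space of non-compact type using coarse geometry, then exclude the non-real cases via a characteristic-class argument.

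For the reduction, I would use that $X$ admits a metric of strictly negative sectional curvature, either by \Cref{prop: Properties of the approximate Einstein metric}(iii) or by the original Gromov--Thurston construction. Hence $X$ is aspherical with Gromov hyperbolic fundamental group. If $X$ were homeomorphic to a closed locally symmetric manifold $Y$, then $Y$ is aspherical as well (eliminating any compact-type factor) and $\pi_1(Y) \cong \pi_1(X)$ is Gromov hyperbolic (eliminating Euclidean or higher-rank factors, whose lattices contain $\bbZ^2$). Therefore $Y$ must be a closed quotient of a rank-one symmetric space of non-compact type: a quotient of real hyperbolic $n$-space, complex hyperbolic $(n/2)$-space, quaternionic hyperbolic $(n/4)$-space, or (if $n=16$) the Cayley hyperbolic plane. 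The first of these is exactly the case the proposition permits.

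To exclude the other three I would use the homeomorphism invariance of rational Pontryagin numbers (Novikov) together with the Hirzebruch proportionality principle. The compact dual of real hyperbolic $n$-space is $S^n$, whose Pontryagin classes all vanish, so every closed real hyperbolic manifold has vanishing rational Pontryagin numbers. The same should hold for $X$: using a branched-cover formula for Pontryagin classes (generalizing Hirzebruch's formula for the signature of a cyclic branched cover), the Pontryagin numbers of $X$ decompose as $d$ times those of $M$ plus correction terms expressible through Pontryagin classes of $T\Sigma$ and of the normal bundle of $\Sigma$ paired with $[\Sigma] \in H_{n-2}(M;\bbQ)$. The tangential classes contribute zero because $M$ and $\Sigma$ are hyperbolic, and the normal-bundle contribution vanishes because $[\Sigma]=0$. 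By contrast, quotients of complex, quaternionic, or Cayley hyperbolic space carry non-vanishing rational Pontryagin numbers whenever $4\mid n$, witnessed for instance by a non-trivial signature.

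The main obstacle I anticipate is the case $n \equiv 2 \pmod{4}$, where classical Pontryagin numbers vanish vacuously on both sides and only $Y$ a complex hyperbolic quotient (with $n/2$ odd) remains to be excluded. Here I would turn to the cup-product structure on real cohomology: for such $Y$ the K\"ahler class $\omega \in H^2(Y;\bbR)$ satisfies $\omega^{n/2} \neq 0$. On $X$ the Poincar\'e dual of $\tilde\Sigma$ has self-intersection zero because $[\Sigma]\cdot[\Sigma]=0$ in $M$, so it cannot play the role of such a class; combined with a careful analysis of pull-back classes from $H^*(M;\bbR)$, this should force the desired contradiction, though making the argument uniform across all $M$ and $d$ satisfying the hypothesis is the most delicate point.
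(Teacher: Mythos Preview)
Your rank-one reduction is essentially the paper's argument (they phrase it via Wolf's theorem and Preissmann, you via Gromov hyperbolicity; same content). Your treatment of the quaternionic and Cayley cases also matches the paper: Novikov invariance plus Hirzebruch proportionality on the target side, and vanishing of the Pontryagin numbers of $X$ on the source side. The paper simply cites Lafont--Roy \cite{LR07} for that vanishing rather than sketching a branched-cover formula, but the strategy is the same.

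The genuine gap is the complex hyperbolic case. You correctly flag $n\equiv 2\pmod 4$ as the obstacle, but your proposed fix does not work: the observation that the Poincar\'e dual of $\tilde\Sigma$ has vanishing self-intersection gives you \emph{one} degree-two class with trivial top power, not control over all of $H^2(X;\bbR)$. There is no reason the would-be K\"ahler class should be (a multiple of) $[\tilde\Sigma]^\vee$ or lie in the pullback of $H^2(M;\bbR)$, and ``a careful analysis of pull-back classes'' is not an argument. The paper bypasses cohomological bookkeeping entirely: if $X$ were homeomorphic to a closed complex hyperbolic manifold $N$, the branched covering $X\to M$ gives a degree $d\geq 2$ map $N\to M$, which is homotopic to a harmonic map; but by Sampson's theorem \cite{Sa86}, any harmonic map from a compact K\"ahler manifold to a real hyperbolic manifold is trivial on homology in degrees $>2$, contradicting positivity of the degree for $n\geq 4$. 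This works uniformly in $n$ and replaces the incomplete part of your proposal.
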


\begin{proof}
%By the results of Gromov--Thurston \cite{GT87} there exists, for any $\varepsilon >0$, a constant $R(\varepsilon)=R(\varepsilon,n,d) > 0$ %such that if the normal injectivity radius of $\Sigma \subseteq M$ is at least $R(\varepsilon)$, then the 
%degree $d$ cover $X$ of $M$ branched along $\Sigma$ 
%admits a smooth Riemannian metric with $\sec \in (-1-\varepsilon,-1+\varepsilon)$. In particular, 
%for $\varepsilon=1/2$, this yields a metric  on $X$ with negative sectional curvature that is strictly $1/4$-pinched. 
%
Arguing by contradiction, we assume that $X$ is homeomorphic to a locally symmetric manifold $N$ that is \textit{not} hyperbolic. 
We first observe that this locally symmetric manifold has to be of real rank one. 
Namely, since $X$ is aspherical, a locally symmetric metric on 
a manifold homeomorphic to $X$ is of non-positive
curvature. By a theorem of Wolf (Theorem 4.2 of \cite{W62}), a cocompact lattice in a semisimple Lie group 
of real rank $r$ contains a subgroup isomorphic to $\mathbb{Z}^r$. 
However, as $X$ carries a negatively curved metric \cite{GT87}, 
by Preissmann's theorem \cite[Théorème 10]{Pr42} (also see \cite[Theorem 3.2 in Chapter 12]{dC92}) any abelian subgroup of $\pi_1(X)$ is infinite cyclic.

It remains to show that $X$ is \textit{not} homeomorphic to any complex-, quaternionic- or Cayley-hyperbolic manifold. 
If $X$ is homotopy equivalent to a complex hyperbolic manifold $N$,  then there is a degree $d\geq 2$
map $\Pi:N\to M$. 
Since $M$ has constant negative curvature, 
the map $\Pi$ is homotopic to a harmonic map. But by a theorem of Sampson \cite{Sa86}, any harmonic map from 
a compact K\"ahler manifold into a real hyperbolic manifold is trivial in homology of dimension larger than 
two, which contradicts the fact that the degree of the map $\Pi$ is positive (unless $n=2$ and $N$ is also real hyperbolic).

By a celebrated result of Novikov \cite[Theorem 1]{Nov65}, the rational Pontryagin classes are a homeomorphism invariant. In particular, the Pontryagin numbers are a homeomorphism invariant. By a result of Lafont--Roy \cite[Theorem B]{LR07} all Pontryagin numbers of $X$ vanish, while it is a well-known consequence of the Hirzebruch proportionality principle \cite[Satz 2 and Equation (2)]{Hir56} that closed quaternionic- or Cayley-hyperbolic manifolds have some non-zero Pontryagin numbers (see \cite[Corollary 3]{LR07}). Therefore, $X$ can also \textit{not} be homeomorphic to a quaternionic- or Cayley-hyperbolic manifold. 
\end{proof}

As a consequence of Proposition \ref{prop: no complex hyperbolic metric} and
the work of Besson, Courtois and Gallot \cite{BCG95} we obtain.

\begin{cor}\label{rigidity}
If ${\rm dim}(X)=4$ and $X$ admits an Einstein metric as constructed in Theorem \ref{thm: existence of einstein metrics}
then $X$ is not homeomorphic to a locally symmetric manifold.
\end{cor}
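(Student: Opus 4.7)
The plan is to combine the reduction from \Cref{prop: no complex hyperbolic metric} with Einstein rigidity of closed real hyperbolic 4-manifolds due to Besson--Courtois--Gallot. By \Cref{prop: no complex hyperbolic metric}, if $X$ is homeomorphic to a locally symmetric manifold then $X$ must in fact be homeomorphic to a closed real hyperbolic 4-manifold. So the task reduces to ruling out this single remaining possibility.

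To do so I would argue by contradiction: suppose $\varphi:X\to N$ is a homeomorphism onto a closed hyperbolic 4-manifold $N$ with hyperbolic metric $g_{\rm hyp}$ normalized so that $\Ric(g_{\rm hyp})=-3\,g_{\rm hyp}$. Transporting $\hat g_k$ via $\varphi$ gives a second Einstein metric on $N$ satisfying $\Ric(\hat g_k)=-3\hat g_k$. I would then apply the Einstein rigidity for closed hyperbolic 4-manifolds (\cite{BCG95}, see also \cite{And10}) asserting that, up to isometry, the only Einstein metric on $N$ with this normalization is $g_{\rm hyp}$. The standard argument combines the BCG entropy inequality $\vol(N,\hat g_k)\geq \vol(N,g_{\rm hyp})$ with the Chern--Gauss--Bonnet identity in dimension four, which for any Einstein metric $g$ with $\Ric(g)=-3g$ reads $\chi(N)=\frac{1}{8\pi^2}\int_N\bigl(|W|^2+6\bigr)\,d\vol_g\geq \tfrac{3}{4\pi^2}\vol(N,g)$, with equality precisely when the Weyl tensor $W$ vanishes identically. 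Applying the same identity to $g_{\rm hyp}$ (for which $W=0$) and comparing volumes forces equality throughout, so that $\hat g_k$ is isometric to $g_{\rm hyp}$.

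To derive the final contradiction I would exploit the non-constant sectional curvature of $\hat g_k$: as already observed immediately after \Cref{thm: existence of einstein metrics}, the approximate Einstein metric satisfies $\sec(\Sigma_k,\bar g_k)=-u_{a(d)}^{-2}<-1$ by \Cref{lem: cone singularities of Ansatz}, and since $\hat g_k\to\bar g_k$ in $C^{2,\alpha}$ this gives $\sec(\Sigma_k,\hat g_k)<-1$ along $\Sigma_k$ for all $k$ sufficiently large. Thus $\hat g_k$ cannot have constant sectional curvature $-1$, contradicting the isometry with $g_{\rm hyp}$. The main subtlety lies in citing BCG in the Einstein form just used: the original statement concerns minimal volume entropy, and the dimension-four Gauss--Bonnet upgrade sketched above is needed to promote it to Einstein rigidity; this upgrade is standard and is spelled out in the survey \cite{And10}.
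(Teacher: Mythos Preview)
Your proof is correct and follows essentially the same route as the paper: reduce to the real hyperbolic case via \Cref{prop: no complex hyperbolic metric}, then invoke the Besson--Courtois--Gallot rigidity \cite{BCG95} (with the dimension-four Gauss--Bonnet upgrade, as in \cite{And10}) to force constant curvature, contradicting the observation that $\hat g_k$ has sectional curvature strictly less than $-1$ along $\Sigma_k$. One minor point: since $\varphi$ is only a homeomorphism you cannot literally transport $\hat g_k$ to $N$, but this is harmless because the BCG argument you sketch operates via the homotopy equivalence and the homeomorphism-invariant Euler characteristic, and its equality case produces the diffeomorphism as a conclusion rather than requiring it as input.
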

\begin{proof}
Using the notations from Proposition \ref{prop: no complex hyperbolic metric}, if $X$ is homeomorphic 
to a locally symmetric manifold $M$ then $M$ is real hyperbolic. As $X$ admits an Einstein metric $g$, it is a consequence
of \cite[Théorème 9.6]{BCG95} (also see \cite[Corollary 4.6]{And10}) 
that $X$ is diffeomorphic to $M$ and $g$ is of constant curvature. 
However, the curvature of the Einstein metric $g$ on $X$ is not constant, from which the corollary follows.
\end{proof}

In the remainder of this section, which is independent of the rest of the article, 
we show that for 
any $n\geq 4$ there are infinitely many arithmetic hyperbolic manifolds $M$ of dimension $n$ 
which admit branched covers to which our 
construction of Einstein metrics applies, but such that 
at most one of these branched covers can be homeomorphic to a hyperbolic manifold. 
Together with \Cref{prop: no hyperbolic metric}, this completes the proof of Theorem \ref{main thm}.

We begin with collecting some more specific information on the 
standard arithmetic hyperbolic manifolds
used in our construction. 
Let $k$ be a totally real number field of degree $d$ over $\mathbb{Q}$ equipped with a fixed embedding
into $\mathbb{R}$ which we refer to as the identity embedding.
%and denote the ring of integers
%of $k$ by $R_k$. 
Let $V$ be an $(n+1)$-dimensional vector space over $k$ equipped with a quadratic form 
$q$ (with associated symmetric matrix $Q$) defined over $k$ which has signature $(n,1)$ at the identity
embedding, and signature $(n+1,0)$ at the remaining embeddings. Such a quadratic form 
is called \emph{admissible}. We require in the sequel that $q$ is \emph{anisotropic} over 
$\mathbb{Q}$. This means that $q=0$ has no rational solution.

Let $R_k$ be the ring of integers of the number field $k$ and 
let ${\rm O}(q,R_k)$ be the group of automorphisms of the quadratic form $q$ which 
are defined over $R_k$, that is,
\[{\rm O}(q,R_k):=\{X\in {\rm GL}_{n+1}(R_k)\mid X^t Q X=Q\}.\]

A subgroup $\Gamma$ of the isometry group ${\rm O}^+(n,1)$ of the hyperbolic space $\mathbb{H}^n$ is 
called an \emph{arithmetic group of simplest type}
if $\Gamma$ is commensurable with a conjugate of an arithmetic group 
${\rm O}(q,R_k)$. As the quadratic form $q$ is admissible
and anisotropic over $\mathbb{Q}$, an arithmetic group of simplest type $\Gamma$ is a cocompact
lattice in ${\rm O}^+(n,1)$. Thus 
$\Gamma \backslash \bbH^n$ is a compact hyperbolic orbifold with singularities corresponding to the fixed points of $\Gamma$. 
We refer to \cite[Example 6.30]{Em23} for more information.

\begin{example}\label{standardarith}
The quadratic form
\[
	q(x)=-\sqrt{2}x_0^2+x_1^2+\dots + x_n^2
\]
on $\bbR^{n+1}$ is defined over the quadratic extension $\mathbb{Q}(\sqrt{2})$ of $\mathbb{Q}$. 
Evaluation on the non-identity embedding $\mathbb{Q}(\sqrt{2})\to \mathbb{R}$ given by
$\sqrt{2}\to -\sqrt{2}$ shows that $q$ is admissible, moreover it is anisotropic over $\mathbb{Q}$. 
The upper paraboloid $\big\{x \in \bbR^{n+1} \, | \, q(x)=-1 \text{ and }x_0 > 0\big\}$ is a model for $\bbH^n$. 

The ring of integers of the number field $\bbQ(\sqrt{2})$ is the ring $\bbZ[\sqrt{2}]$ and hence 
\[
	{\rm O}(q,\mathbb{Z}[\sqrt{2}])={\rm O}(q) \cap \GL_{n+1}\big(\bbZ\big[\sqrt{2}\big]\big)
\]
is a cocompact lattice in ${\rm O}^+(n,1)$. 
\end{example}

Standard theory of quadratic forms (see \cite{La73}) provides an equivalence over $k$ 
of the quadratic form $q$ to an admissible diagonal quadratic form. Thus we may assume without
loss of generality that 
\[q(x)=-a_0 x_0^2+a_1 x_1^2+ \cdots + a_nx_n^2\]
with $a_i\in k, a_i>0$. Put $\Gamma={\rm O}(q,R_k)$. 

Let $\iota \in {\rm Isom}(\bbH^n)$ be  the geometric involution  
that acts via reflection in the $x_1$-variable, that is,
\[
	\iota(x_0,x_1,x_2,\dots,x_n)=(x_0,-x_1,x_2,\dots,x_n).
\]
Then $\tilde{H}:={\rm Fix}(\iota)=\{x \in \bbH^n \, | \, x_1=0\}$ is a hyperplane. 
The quadratic form 
\[q_0(x)=-a_0x_0^2+a_2x_2^2+ \cdots +a_nx_n^2\]
on the linear subspace $V_0=\{x_1=0\}$ of $V$ defined over $k$ 
is admissible and anisotropic over $\mathbb{Q}$. 
Under the obvious identifications we then have 
${\rm Stab}_{\Gamma}(\tilde{H})={\rm O}^+(n-1,1) \cap {\rm O}(q,R_k)$, so that, by the same reason as above, 
the quotient ${\rm Stab}_{\Gamma}(\tilde{H}) \backslash \tilde{H}$ is compact. This means that, in the terminology of \Cref{subsec: subgroup separability}, $\tilde{H}$ is a $\Gamma$-hyperplane. Furthermore, we have $\iota \Gamma \iota =\Gamma$. 

Consider the sequence 
$\Gamma_m \vartriangleleft \Gamma$ of congruence subgroups defined as the kernel of the 
natural homomorphism
\[
    \Gamma\to \GL_{n+1}\big(R_k\big)\to 
\GL_{n+1}\big(R_k/{\mathcal O}_m\big)
\]
where ${\mathcal O}_m$ is a sequence of mutually distinct prime ideals in $R_k$. 
For sufficiently large $m$ the group $\Gamma_m$ is sufficiently deep and hence torsion free.
The quotient manifold 
$ N_m= \Gamma_m\backslash \mathbb{H}^n$ is a standard arithmetic hyperbolic 
manifold. Moreover, by construction, $N_m$ is oriented. 

As kernels are normal subgroups, one easily checks $\iota \Gamma_m\iota^{-1}=\Gamma_m$.  
It follows that $\iota$ descends to an isometric 
involution of $N_m=\Gamma_m\backslash \mathbb{H}^n$, again denoted by
$\iota$. The fixed point set of this involution 
is a (possibly disconnected) totally geodesic submanifold of codimension one. Fix a component
$H$ of this submanifold. We may assume that $H$
is the projection to $N_m$ of the $\Gamma_m$-hyperplane $\tilde H$. Following
the construction in Section \ref{sec: good submanifolds}, we know that 
${\rm Stab}_{\Gamma_m}(\tilde H)$ is a virtual retract of 
$\Gamma_m$. Let $\Gamma_m^\prime \leqslant \Gamma_m$ be a finite index subgroup containing 
the fundamental group ${\rm Stab}_{\Gamma_m}(\tilde H)$ of $H$ 
which retracts onto ${\rm Stab}_{\Gamma_m}(\tilde H)$.

\begin{lem}\label{fi}
There exists a $\iota$-invariant finite index subgroup $\Gamma_m^0 \leqslant \Gamma_m^\prime$ 
which contains 
${\rm Stab}_{\Gamma_m}(\tilde H)$.
\end{lem}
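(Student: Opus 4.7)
The natural candidate is the intersection
\[
	\Gamma_m^0 := \Gamma_m^\prime \cap \iota\Gamma_m^\prime\iota^{-1}.
\]
The plan is to show that this subgroup has finite index in $\Gamma_m$, is manifestly $\iota$-invariant by construction, and still contains ${\rm Stab}_{\Gamma_m}(\tilde H)$. The first two properties are essentially formal; the only thing that actually requires a small argument is the containment of the stabilizer.

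For finite index, recall that $\iota\Gamma_m\iota^{-1}=\Gamma_m$, so $\iota\Gamma_m^\prime\iota^{-1}$ is a finite index subgroup of $\Gamma_m$ and hence $\Gamma_m^0$ is also of finite index. For $\iota$-invariance, since $\iota^2=\mathrm{id}$, one has
\[
	\iota\Gamma_m^0\iota^{-1} = \iota\Gamma_m^\prime\iota^{-1}\cap \iota^2\Gamma_m^\prime\iota^{-2} = \iota\Gamma_m^\prime\iota^{-1}\cap\Gamma_m^\prime = \Gamma_m^0.
\]

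The crux is to check ${\rm Stab}_{\Gamma_m}(\tilde H)\subseteq \Gamma_m^0$. By hypothesis this stabilizer already lies in $\Gamma_m^\prime$, so it remains to see that it also lies in $\iota\Gamma_m^\prime\iota^{-1}$, equivalently that $\iota^{-1}{\rm Stab}_{\Gamma_m}(\tilde H)\iota\subseteq \Gamma_m^\prime$. The key observation is that $\iota$ is the geometric reflection in the hyperplane $\tilde H$, and any isometry $g$ of $\mathbb{H}^n$ preserving $\tilde H$ setwise commutes with the reflection in $\tilde H$: writing $g$ in the orthogonal decomposition $\mathbb{R}^{n+1}=\mathbb{R}e_1\oplus V_0$ one sees that $g(e_1)=\pm e_1$ and $g$ preserves $V_0$, so $g$ commutes with $\iota=\mathrm{diag}(1,-1,1,\ldots,1)$. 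Thus $\iota g\iota^{-1}=g$ for every $g\in{\rm Stab}_{\Gamma_m}(\tilde H)$, which gives
\[
	\iota\, {\rm Stab}_{\Gamma_m}(\tilde H)\,\iota^{-1} = {\rm Stab}_{\Gamma_m}(\tilde H)\subseteq \Gamma_m^\prime,
\]
as required. This is the only nontrivial step in the proof, and it is where the specific geometric nature of $\iota$ (as the reflection fixing $\tilde H$, not just some arbitrary symmetry) is used.
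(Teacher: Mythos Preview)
Your proposal is correct and follows essentially the same approach as the paper: both define $\Gamma_m^0=\Gamma_m^\prime\cap\iota\Gamma_m^\prime\iota^{-1}$ and verify the stabilizer containment by observing that elements of ${\rm Stab}_{\Gamma_m}(\tilde H)$ are (conjugation-)invariant under the reflection $\iota$. You actually spell out the linear-algebra reason for the commutation a bit more explicitly than the paper, which just says ``by inspecting the action of the differential''; otherwise the arguments are identical.
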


In particular, by restricting the retraction ${\rm ret}:\Gamma_m^\prime\to {\rm Stab}_{\Gamma_m}(\tilde H)$ to $\Gamma_m^0$, we see that $\Gamma_m^0$ also retracts onto ${\rm Stab}_{\Gamma_m^0}(\tilde H)={\rm Stab}_{\Gamma_m}(\tilde H)$.

\begin{proof}
As $\Gamma_m^\prime \leqslant \Gamma_m$ has finite index and $\Gamma_m$ is $\iota$-invariant, 
$\Gamma_m^0:=\Gamma_m^\prime \cap \iota \Gamma_m^\prime \iota^{-1}$ is a $\iota$-invariant finite index subgroup of 
$\Gamma_m$. Moreover, by inspecting the action of the differential, one can check that $\iota^{-1} {\rm Stab}_{\Gamma_m}(\tilde H) \iota \subseteq {\rm Stab}_{\Gamma_m}(\tilde H) \subseteq \Gamma_m^\prime$. This then implies ${\rm Stab}_{\Gamma_m}(\tilde H) \leqslant \Gamma_m^0$.
\end{proof}

The group $\Gamma_m^0$ is invariant under conjugation by $\iota$, and this action 
of $\iota$ on $\Gamma_m^0$ is nontrivial. Thus  
$\iota$ acts as an isometric involution on $M_m=\Gamma_m^0\backslash \mathbb{H}^n$. Its fixed point set  
is a disjoint union of totally geodesic embedded hyperplanes containing the quotient $H$ 
of $\tilde H$ under the action of ${\rm Stab}_{\Gamma_m}(\tilde H)$. 

By the construction in Section \ref{sec: Einstein metrics}, by perhaps passing to a 
two-sheeted covering $\hat M_m$ of $M_m$, we
may assume that the preimage $\hat H$ of $H$ in $\hat M_m$ 
contains a totally geodesic embedded hyperplane $\hat \Sigma_m$ which is homologous to zero
and consists of at most two connected components. 
The involution $\iota$ may not lift to $\hat M_m$, but it lifts to 
the covering $\tilde M_m$ of $\hat M_m$ of degree at most two with fundamental group 
$\pi_1(\hat M_m)\cap \iota \pi_1(\hat M_m)\iota^{-1}$.
Note that as the hyperplane $H$ in $M_m$ is contained in the fixed point set of the involution
$\iota$, if $\pi_1(\hat M_m)<\pi_1(M_m)$ is not invariant under conjugation by $\iota$, then 
the preimage of $\hat H$ in $\tilde M_m$ consists of 
two components of $\hat H$, each of which contains a totally geodesic null homologous
hyperplane as required in the construction in the beginning of this article. 
 
Using this construction, \Cref{main thm} is an immediate consequence of \Cref{thm: existence of einstein metrics},
\Cref{prop: no complex hyperbolic metric} and the following main result of this section.

\begin{thm}\label{prop: no hyperbolic metric}
Let $M$ be an oriented closed hyperbolic manifold of dimension $n\geq 4$ 
and let $H\subset M$ to a totally geodesic embedded hyperplane. Assume that
$H$ is contained in the fixed point set of an orientation reversing isometric
involution $\iota$ and that 
$H$ contains a (possibly disconnected) embedded totally geodesic 
hyperplane $\Sigma$ which is homologous to zero in $H$. 
Then for at most 
one $d\in 4\mathbb{Z}$, the cyclic $d$-fold covering of $M$ branched along $\Sigma$ can be
homeomorphic to a hyperbolic manifold. 
\end{thm}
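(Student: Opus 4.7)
The plan is to argue by contradiction: suppose that for two distinct $d_1,d_2\in 4\mathbb{Z}$ both $X_{d_i}$ are homeomorphic to hyperbolic $n$-manifolds. By Mostow rigidity ($n\geq 3$), each $X_{d_i}$ carries a unique hyperbolic metric $\hat g_i$ and the deck group $C_{d_i}$ acts isometrically, so $\pi_i^{-1}(\Sigma)\subset X_{d_i}$ is totally geodesic of codimension two. Since $\iota$ fixes $\Sigma$ pointwise and is orientation reversing, its differential on the normal $2$-plane to $\Sigma$ is a reflection, so $\iota_\ast$ sends each meridian to its inverse. Consequently $\iota$ normalizes (with sign $-1$) the class $\phi_i\in H^1(M\setminus\Sigma;\mathbb{Z}/d_i)$ defining the cover, and hence lifts to an anti-$C_{d_i}$-equivariant involution $\tilde\iota_i$ of $X_{d_i}$. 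By Mostow rigidity again, $\tilde\iota_i$ is isotopic to an isometry, and together with $\sigma_i$ generates a dihedral subgroup $D_{d_i}\subset\mathrm{Isom}(X_{d_i},\hat g_i)$ of order $2d_i$.

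The fixed hyperplane $\widetilde H_0\subset X_{d_i}$ of $\tilde\iota_i$ is totally geodesic and projects homeomorphically onto $H$. Its $C_{d_i}$-orbit is a ``book'' of $d_i$ totally geodesic pages meeting along $\pi_i^{-1}(\Sigma)$ at consecutive dihedral angle $\pi/d_i$, whose union is $\pi_i^{-1}(H)$. The quotient $\mathcal O_i:=X_{d_i}/D_{d_i}$ is then a hyperbolic orbifold on $M/\iota$ with mirror boundary along $H$ and a corner of angle $\pi/d_i$ along $\Sigma\subset H$. Meanwhile $(M,g_0)/\iota$ gives a hyperbolic orbifold $\mathcal O_0$ on $M/\iota$ with the same mirror stratum but corner angle $\pi$ along $\Sigma$. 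Three pairwise distinct hyperbolic orbifold structures on the common underlying space $M/\iota$ would therefore arise, differing only in the corner angle along $\Sigma$.

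The hypothesis $d_i\in 4\mathbb{Z}$ is used to convert this difference into a rigid invariant: since $d_i/2$ is even, the pages $\widetilde H_0$ and $\widetilde H_{d_i/2}$ meet orthogonally along $\pi_i^{-1}(\Sigma)$, so $\sigma_i^{d_i/2}\tilde\iota_i$ is a second orientation-reversing isometric involution whose fixed hyperplane is orthogonal to that of $\tilde\iota_i$. Passing to a common finite cover of $\mathcal O_1$ and $\mathcal O_2$ and comparing the resulting arithmetic lattices of simplest type in $\mathrm{Isom}(\mathbb H^n)$, this orthogonal reflection data should make the rotation angle at $\Sigma$ a commensurability invariant, forcing $d_1=d_2$. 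The principal obstacle is precisely this last rigidity step: the book of totally geodesic pages makes the value $d_i$ geometrically manifest in each $X_{d_i}$, but turning $\pi/d_i$ into a rigid invariant of the shared ambient object $M$ (equivalently, of the commensurator of $\pi_1(M)$) requires careful use of arithmeticity, and the restriction $4\mid d_i$ is what supplies the orthogonal reflection pair needed to make this invariant sharp.
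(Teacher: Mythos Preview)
Your outline parallels the paper's setup through the ``book of pages'' picture, but it has a genuine gap precisely where you flag it. You reduce to three hyperbolic orbifold structures on the common underlying space $M/\iota$ differing only in the corner angle along $\Sigma$, and then hope that the angle $\pi/d_i$ is a commensurability invariant, invoking arithmeticity. But the theorem does \emph{not} assume $M$ is arithmetic, so that route is unavailable; and in the arithmetic case the commensurator is dense, so it is unclear what invariant you would extract. Distinct hyperbolic orbifold structures on a fixed topological space with different isotropy along a codimension-two stratum are not in themselves contradictory, and Mostow rigidity for orbifolds does not apply since $\pi_1^{\mathrm{orb}}(\mathcal O_1)\not\cong\pi_1^{\mathrm{orb}}(\mathcal O_2)$. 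So the argument, as written, does not close.

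The paper's resolution is a different and self-contained idea. Having established (with substantial work you skip: \Cref{fixed} for matching fixed sets of $\phi$ and $\phi^\#$, and \Cref{intersection} for the exact rotation angle $2\pi/d$) that each $X^{(d_i)}$ is tiled by $d_i$ isometric hyperbolic pieces $M_{\rm cut}^{2\pi/d_i}$ with totally geodesic boundary meeting at dihedral angle $2\pi/d_i$ along $\Sigma$, the paper glues $d_1/2$ copies of $M_{\rm cut}^{2\pi/d_1}$ and $d_2/2$ copies of $M_{\rm cut}^{2\pi/d_2}$ cyclically around $\Sigma$. Since $\tfrac{d_1}{2}\cdot\tfrac{2\pi}{d_1}+\tfrac{d_2}{2}\cdot\tfrac{2\pi}{d_2}=2\pi$, this yields a \emph{smooth} closed hyperbolic manifold $Y$. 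By Seifert--van Kampen, $\pi_1(Y)\cong\pi_1\bigl(X^{((d_1+d_2)/2)}\bigr)$, so by Mostow rigidity $Y$ carries an isometric cyclic $\tfrac{d_1+d_2}{2}$-action rotating the normal bundle of $\Sigma$ by a \emph{fixed} angle. This forces all consecutive pages in $Y$ to meet at the same angle, contradicting the explicit construction in which some meet at $2\pi/d_1$ and others at $2\pi/d_2$. The role of $4\mid d_i$ here is simply that $d_i/2$ is even, so the glued manifold $Y$ is again a branched cover of even degree to which the earlier analysis applies---not an orthogonality trick.
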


\begin{rem}
Building on the results in this section, in forthcoming work, we show that for $n\geq 4$, 
no nontrivial branched cover of a closed hyperbolic $n$-manifold admits a hyperbolic metric. We refer
to \cite{KS12} for a closely related result. 
\end{rem}

The remainder of this article is devoted to the proof of \Cref{prop: no hyperbolic metric}. 
It is inspired by \cite[Remark 3.6]{GT87}, though it does not directly follow from it. 
The section is divided into two subsections. 
We always consider a degree $d$ branched covering $X$ of $M$ for an \emph{even} number 
$d\geq 2$, and we assume that $X$ admits 
a hyperbolic metric.

\subsection{Fixed point sets of isometries}\label{fixedpointset}  

Let $M$ be as in the statement of Theorem \ref{prop: no hyperbolic metric}, containing
the hypersurface $H\supset \Sigma$.  By assumption, $\Sigma$ bounds a submanifold $H_0\subset H$.
Put $H_1=H\setminus H_0$. 
%Denote by 
%\[
%	H_0 \text{ and } H_1 \text{ the (closures of the) two connected components of } H \setminus \Sigma.
%\]

%Fix an 
%\textit{even} 
%integer $d\geq 2$ and let $X$ be 
The $d$-fold covering $X$ of $M$ branched along the totally geodesic submanifold 
$\Sigma \subset H\subset M$ can be realized as follows. 
Let $M_{\rm cut}$ be obtained from $M$ by cutting along $H_0$, that is, $M_{\rm cut}$ is the metric completion of
$M-H_0$. Thus $M_{\rm cut}$
is a compact (topological) 
manifold whose boundary consists of two copies $H_0^{-}$ and $H_0^{+}$ of $H_0$ intersecting in $\Sigma$. 
The manifold $X$ 
is obtained by gluing $d$ copies $M_{\rm cut}^{1},\dots,M_{\rm cut}^{d}$ of $M_{\rm cut}$ along the boundary, so that the copy of $H_0^+$ in $M_{\rm cut}^{i}$ is glued to the copy of $H_0^{-}$ in $M_{\rm cut}^{i+1}$ (where the superscripts $i$ are taken ${\rm mod}\, d$).

Let $\iota=\iota_M:M\to M$ be the isometric involution 
whose fixed point set contains 
$H \subseteq {\rm Fix}(\iota)$. 
Since locally near $H$, $\iota_M$ acts as a reflection in $H$, it exchanges the two components of $U\setminus H$ where
$U$ is a tubular neighborhood of $H$ in $M$. Thus $\iota_M$ acts as an involution on $M_{\rm cut}$ which 
exchanges $H_0^{+}$ and $H_0^{-}$ and fixes $W={\rm Fix}(\iota_M)\setminus H_0\supseteq H_1$.

As a consequence, $\iota_M$ induces an 
involution $\iota$ of $X$ with the property that $\iota(M_{\rm cut}^i)=M_{\rm cut}^{d+2-i}$ 
and so that the restrictions $\iota:M_{\rm cut}^{i} \to M_{\rm cut}^{d+2-i}$ are identified with 
$\iota:M_{\rm cut} \to M_{\rm cut}$ (superscripts are again taken ${\rm mod}\, d$).

Let $\zeta$ be a generator of the cyclic deck group of $X\to M$. 
It cyclically permutes the 
copies $M^1_{\rm cut},\dots,M_{\rm cut}^d$ of $M_{\rm cut}$ in $X$. 
%If the degree $d$ is even 
Define
$j=\zeta \circ \iota$ (read from right to left).
%and for odd degree define $j=\iota$. 

\begin{fact}\label{factfixed}
%\begin{itemize}
%    \item 
The fixed point set of $j$ in $X$ is the union 
$H_0^{d,1}\cup H_0^{d,1+d/2}$ of the copies of $H_0\subset H$ 
in $M_{\rm cut}^1$ and $M_{\rm cut}^{1+d/2}$, and the copies $H_0^{d,1}$ and $H_0^{d,1+d/2}$ of $H_0$ are
glued along $\Sigma$
(see \Cref{fig: branched cover and involutions}). 
%\item If $d$ is odd then the fixed point set of $j$ in $X$ is the union $W^1\cup H_0$
%of the copy of $W\supseteq H_1$ in $M_{\rm cut}^1$ and $M_{\rm cut}^{(d-1)/2}\cap M_{\rm cut}^{(d+1)/2}$.
%In particular, this fixed point set is homeomorphic to ${\rm Fix}(\iota_M)$. 
%\end{itemize}
\end{fact}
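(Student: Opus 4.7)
The plan is a direct bookkeeping computation tracking the action of $j=\zeta\circ\iota$ on the decomposition $X=M_{\rm cut}^1\cup\cdots\cup M_{\rm cut}^d$ and on the shared boundary pieces (all indices read mod $d$). Let $H_0^{d,i}\subset X$ denote the copy of $H_0$ obtained as the boundary component $H_0^{+}$ of $M_{\rm cut}^i$, equivalently as $H_0^{-}$ of $M_{\rm cut}^{i+1}$ under the gluing.

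The first step is to rule out interior fixed points. Combining $\iota(M_{\rm cut}^i)=M_{\rm cut}^{d+2-i}$ with $\zeta(M_{\rm cut}^j)=M_{\rm cut}^{j+1}$ gives
\[
  j(M_{\rm cut}^i)=M_{\rm cut}^{d+3-i}.
\]
So $j$ preserves $M_{\rm cut}^i$ setwise only when $2i\equiv 3\pmod d$, which is impossible for $d$ even. Hence $\mathrm{Fix}(j)\subseteq\bigcup_i H_0^{d,i}$.

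The second step identifies the invariant pieces. Because $\iota_M$ locally reflects across $H$, it swaps $H_0^{+}$ and $H_0^{-}$ inside each $M_{\rm cut}$; tracking this through $\iota:M_{\rm cut}^i\to M_{\rm cut}^{d+2-i}$ and the gluing, then applying $\zeta$, yields
\[
  j(H_0^{d,i})=H_0^{d,\,d+2-i}.
\]
Thus $H_0^{d,i}$ is $j$-invariant iff $2i\equiv 2\pmod d$, whose solutions in $\{1,\dots,d\}$ for $d$ even are exactly $i=1$ and $i=1+d/2$.

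The third step verifies that $j$ acts as the identity on each of $H_0^{d,1}$ and $H_0^{d,1+d/2}$, and that they meet only along $\Sigma$. For $y\in H_0^{d,1}$, represent $y$ as lying in $H_0^{+}\subset M_{\rm cut}^1$. Since $d+2-1\equiv 1$, $\iota$ preserves $M_{\rm cut}^1$ and acts there as the model involution $\iota_M$, which fixes $H\supseteq H_0$ pointwise while swapping $H_0^{+}$ and $H_0^{-}$; so $\iota(y)$ is the same underlying point of $H_0$, now viewed in $H_0^{-}\subset M_{\rm cut}^1$, which under the gluing is the corresponding point in $H_0^{+}\subset M_{\rm cut}^d$. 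Applying $\zeta$ shifts the block index from $d$ to $1$ and returns $y$, so $j(y)=y$. The same argument handles $H_0^{d,1+d/2}$ verbatim, using $d+2-(1+d/2)\equiv 1+d/2$. Both invariant pieces contain the unique lift of $\Sigma$ in $X$ (the branch locus), but they sit on disjoint pairs of consecutive blocks, namely $(M_{\rm cut}^1,M_{\rm cut}^2)$ and $(M_{\rm cut}^{1+d/2},M_{\rm cut}^{2+d/2})$, so they can only intersect along $\Sigma$. I expect the only delicate point to be keeping the identifications $M_{\rm cut}\cong M_{\rm cut}^i$ consistent with both $\iota$ and $\zeta$ throughout; once the conventions from the beginning of the subsection are fixed, every step reduces to a straightforward index computation mod $d$.
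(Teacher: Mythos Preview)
Your argument is correct. The paper does not give a proof of this statement at all: it is recorded as a \emph{Fact} and justified only by reference to \Cref{fig: branched cover and involutions}. Your three-step computation (ruling out interior fixed points via $j(M_{\rm cut}^i)=M_{\rm cut}^{d+3-i}$ and the parity obstruction $2i\equiv 3\pmod d$, identifying the two invariant boundary pieces via $j(H_0^{d,i})=H_0^{d,d+2-i}$, and then verifying pointwise fixing) supplies exactly the bookkeeping the paper leaves to the reader and the picture. One small point you could make explicit: in Step~2 a point on a non-invariant $H_0^{d,i}$ could in principle still be fixed if it lay in $H_0^{d,i}\cap H_0^{d,d+2-i}$, but this intersection is $\Sigma$, which is already contained in $H_0^{d,1}$, so no extra fixed points arise.
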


\begin{figure}[ht]
	\begin{center}
		\includegraphics[width=12cm]{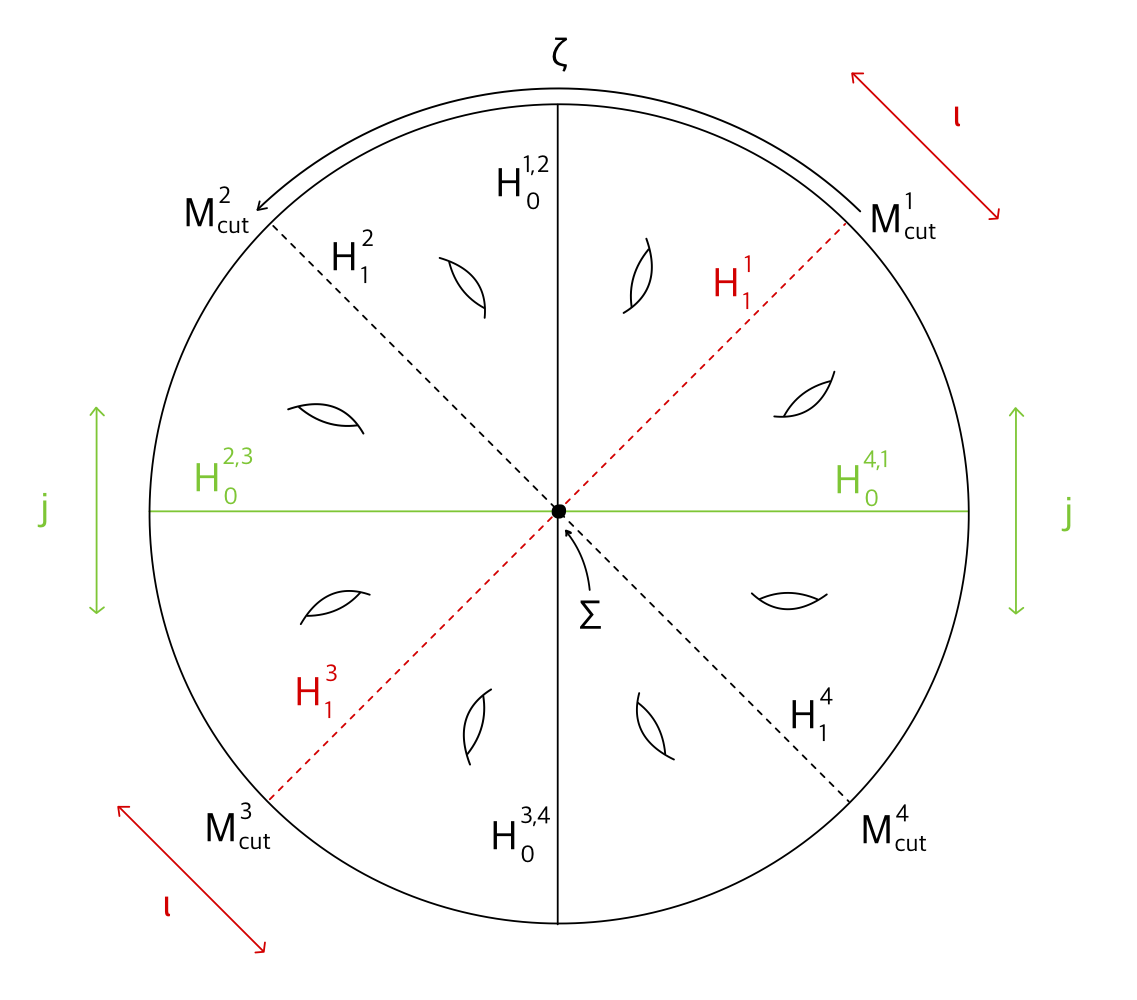}
		\caption{The cyclic $4$-fold branched cover. The involutions $\iota$ and $j$ act via reflection along the colored submanifolds and $\zeta$ via rotation around $\Sigma$.}
		\label{fig: branched cover and involutions}
	\end{center}
\end{figure}

The fixed point set of each of the involutions $\zeta^i \circ j \circ \zeta^{-i}$ ($i=0,\dots,d-1$) 
is the embedded submanifold $\zeta^i({\rm Fix}(j))$ of $X$. Their union cuts $X$ up into the $d$ copies of $M_{\rm cut}$.
We call any diffeomorphism of $X$ contained in the finite group of diffeomorphisms
of $X$ generated by $j$ and $\zeta$ an \emph{admissible} diffeomorphism of $X$.
%the involution $\iota_M$ of $M$ to $X$ although
%this is not an accurate description of such a map. Note that 
%the element $\zeta$ i

By Mostow rigidity, any homotopy self-equivalence $\sigma$ of $X$ is homotopic to a unique isometry $\sigma^\#$ of $X$.
Furthermore, by uniqueness, the map
\[
	{\rm Homeo}(X) \to {\rm Isom}(X), \, \sigma \mapsto \sigma^\#
\]
which associates to a homeomorphism the unique isometry homotopic to it 
is a group homomorphism.
The following result relates the finite group of admissible diffeomorphisms of $X$ to the 
corresponding finite group of isometries for the hyperbolic metric. It 
seems be known to the experts, and it was claimed in \cite{GT87} for the generator
$\zeta$ of the deck group of $X\to M$ (except for Remark 3.4, this is not used in \cite{GT87}).  
In view of the fact that in the presence of fixed point sets of 
positive dimension, a finite group of diffeomorphisms of a hyperbolic manifold of dimension $n\geq 3$
need not be conjugate to its isometric realization (see the main result of \cite{CLW18} for the case 
of finite groups of homeomorphisms and the included remark about the case of diffeomorphisms), 
we present a proof.

\begin{prop}\label{fixed}
Let $\phi$ be 
an admissible diffeomorphism of $X$ 
and let $\phi^\#$ be the 
isometry of $X$ homotopic to $\phi$. Then 
the fixed point set 
${\rm Fix}(\phi^\#)\subset X$ of 
$\phi^\#$ 
is (abstractly) diffeomorphic to ${\rm Fix}(\phi)$. 
Moreover, ${\rm Fix}(\phi^\#)$ and ${\rm Fix}(\phi)$ are freely homotopic inside $X$.
\end{prop}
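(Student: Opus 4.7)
The plan is to set up a bijection between components of $\mathrm{Fix}(\phi)$ and $\mathrm{Fix}(\phi^\#)$ via conjugacy data in $\pi_1(X)$, and then identify matched components using Mostow rigidity. First, since $(\cdot)^\# \colon \mathrm{Homeo}(X) \to \mathrm{Isom}(X)$ is a group homomorphism, the finite group $G = \langle j, \zeta \rangle$ of admissible diffeomorphisms is sent to a finite subgroup $G^\# \leqslant \mathrm{Isom}(X)$. By Mostow rigidity in dimension $n \geq 3$, the natural map $\mathrm{Isom}(X) \to \mathrm{Out}(\pi_1(X))$ is an isomorphism, and since $\phi \simeq \phi^\#$ for every admissible $\phi$, the two actions of $G$ on $X$ induce the same homomorphism $G \to \mathrm{Out}(\pi_1(X))$. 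In particular, for each $\phi \in G$ the two semidirect products $\pi_1(X) \rtimes \langle \phi \rangle$ arising from the two actions are canonically identified.

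Next, one classifies the components of the fixed sets on both sides in terms of $\pi_1(X)$-conjugacy classes of lifts of $\phi$ in the extension $\pi_1(X) \rtimes \langle \phi \rangle$. For the isometric action, each component of $\mathrm{Fix}(\phi^\#)$ lifts to a totally geodesic subspace of $\mathbb{H}^n$ fixed by some lift of $\phi^\#$, and the $\pi_1(X)$-conjugacy class of this lift determines the component uniquely; this is classical for isometric actions of finite cyclic groups on closed hyperbolic manifolds. For the diffeomorphism action, the fixed sets are explicitly known: $\mathrm{Fix}(\zeta^k) = \Sigma$ for $1 \leq k < d$, and $\mathrm{Fix}(\zeta^i \circ j)$ is described by the appropriate translate of Fact \ref{factfixed}. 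Directly from the branched-cover construction one can identify these components with $\pi_1(X)$-conjugacy classes of lifts of $\phi$, matching those on the isometric side under the canonical identification of semidirect products. This produces the desired bijection between components.

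Third, for each matched pair $F \leftrightarrow F^\#$, both are closed aspherical manifolds whose fundamental groups coincide as subgroups of $\pi_1(X)$ up to conjugation. The component $F^\#$ is totally geodesic in the hyperbolic manifold $X$ and hence is itself a closed hyperbolic manifold. The component $F$ is, by the branched-cover construction, either a component of $\Sigma$ (totally geodesic in $M$) or the double of a component of $H_0$ along its totally geodesic boundary $\Sigma$; in both cases $F$ carries a natural closed hyperbolic structure. By Mostow rigidity applied to $F$ and $F^\#$, the isomorphism of fundamental groups is realized by a diffeomorphism $F \to F^\#$. If $\dim F = 2$, which can only happen when $n = 4$, the same conclusion follows from the classification of closed hyperbolic surfaces. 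The free homotopy of the two inclusions into the aspherical manifold $X$ is then immediate from the fact that they induce conjugate homomorphisms on fundamental groups.

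The main obstacle is the classification of components of $\mathrm{Fix}(\phi)$ via $\pi_1(X)$-conjugacy classes of lifts. For the isometric action this is a standard and well-understood consequence of the totally geodesic nature of fixed sets. For the smooth diffeomorphism action on $X$, the analogous description is not automatic: one must verify directly, using the explicit branched-cover description and the action of $\langle j, \zeta \rangle$ in the complex line normal to $\Sigma$, that each component of $\mathrm{Fix}(\phi)$ is realised by a lift with non-empty fixed locus on the universal cover, and that distinct $\pi_1(X)$-conjugacy classes of such lifts produce distinct components. Once this algebraic matching is in place, the remainder of the argument reduces to Mostow rigidity within each fixed component, and the free-homotopy statement follows from asphericity.
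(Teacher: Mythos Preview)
Your strategy---matching components of $\mathrm{Fix}(\phi)$ and $\mathrm{Fix}(\phi^\#)$ via $\pi_1(X)$-conjugacy classes of lifts in $\pi_1(X)\rtimes\langle\phi\rangle$, then applying Mostow rigidity componentwise---is a legitimate framework, and different from the paper's route. But the step you yourself flag as ``the main obstacle'' is a genuine gap, not a routine verification. For the bijection to hold you need that \emph{the same} conjugacy classes of lifts have non-empty fixed locus on both sides. On the $\phi^\#$ side this follows from Cartan's fixed-point theorem: a lift to $\mathbb{H}^n$ has a fixed point iff it has finite order in the extension. On the $\phi$ side you offer no analogous mechanism, only the promise to ``verify directly from the branched-cover description.'' Without more, nothing prevents a finite-order lift of $\phi$ from acting freely on $\tilde X$, while the corresponding lift of $\phi^\#$ is elliptic and produces an unmatched component of $\mathrm{Fix}(\phi^\#)$. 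The missing ingredient is that $X$ carries a locally $\mathrm{CAT}(-1)$ hyperbolic cone metric $h$ (the pullback from $M$, with cone angle $2\pi d$ along $\Sigma$) for which every admissible $\phi$ is an \emph{isometry}; once you observe this, Cartan applies equally on both sides, fixed sets of lifts are convex hence contractible, and your bijection (together with the identification $\pi_1(F)=C_{\pi_1(X)}(\tilde\phi)=\pi_1(F^\#)$) goes through cleanly.

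The paper does use this cone metric $h$, but in a more hands-on way: it proves $\mathrm{Fix}(\phi_*)=\pi_1(Z,x)$ for each component $Z\subset\mathrm{Fix}(\phi)$ via unique $h$-geodesic loop representatives, builds the map $Z\mapsto Z^\#$ by tracking closed hyperbolic geodesics and arranging $\phi_*=\phi^\#_*$ after an isotopy, and then rules out extra components of $\mathrm{Fix}(\phi^\#)$ by a separate topological argument on $X\setminus\mathrm{Fix}(\phi)$ (asphericity, center-free $\pi_1$, and non-splitting of the quotient extension). With the $\mathrm{CAT}(-1)$ observation supplied, your conjugacy-class approach would replace this last step entirely, which is a genuine simplification; as written, however, the proposal does not supply it. A minor inaccuracy: not every codimension-one fixed component is a double of $H_0$---for involutions such as $\iota=\zeta^{-1}j$ the fixed set lies in the preimage of $\mathrm{Fix}(\iota_M)\setminus H_0$---but these are still closed hyperbolic $(n-1)$-manifolds, so this does not affect the argument.
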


\begin{proof}
%[Proof of \Cref{fixed}]
Let $\phi$ be any nontrivial admissible diffeomorphism of $X$. 
Then either $\phi$ is an orientation reversing involution whose fixed point set is 
a disjoint union of submanifolds of codimension one contained 
in the preimage of ${\rm Fix}(\iota_M)$, 
or it is a power of $\zeta$ with fixed point set $\Sigma$. 
In particular, the fixed point set of $\phi$ is a (possibly disconnected) orientable hyperbolic manifold of dimension
$n-2$ or $n-1$ containing $\Sigma$. 

We first observe that $\phi^\#$ does have fixed points. 
Indeed, otherwise $X \to \langle \phi^\# \rangle \backslash X $ 
would be a finite-sheeted covering map between manifolds. 
As $X$ is a closed hyperbolic manifold, it is aspherical and $\pi_1(X)$ has trivial center. 
But then \cite[Lemma 2.2]{HJ24} implies that $\phi^\#$ can \textit{not} be homotopic to a map of finite order and non-empty fixed point set, contradicting the fact that $\phi^\#$ is homotopic to $\phi$.

Since a component $Z^\#$ 
of ${\rm Fix}(\phi^\#)$ is a totally geodesic 
submanifold of $X$ containing the unique closed geodesic in $X$ freely homotopic 
to an element of $\pi_1(Z^\#)$, no two distinct components of 
${\rm Fix}(\phi^\#)$ can be freely homotopic. As a consequence, it suffices to show 
that for every component $Z$ of ${\rm Fix}(\phi)$ (or $Z^\#$ of ${\rm Fix}(\phi^\#)$)
there exists a component $Z^\#$ of ${\rm Fix}(\phi^\#)$ 
(or $Z$ of 
${\rm Fix}(\phi)$)
which is 
diffeomorphic and freely homotopic to $Z$ (or $Z^\#$). 
%We only show the first implication as the proof of the second is completely
%analogous and furthermore the statement is not used in the sequel. 

Thus let $Z$ be a component of ${\rm Fix}(\phi)$ and 
choose a basepoint $x\in Z$. 
Let $\phi_*$ be the automorphism of $\pi_1(X,x)$ induced by $\phi$. 
We divide the proof into six  steps. 

\smallskip\noindent
{\bf Claim 1.} \emph{We have ${\rm Fix}(\phi_\ast)=\pi_1(Z,x)$.}

\smallskip\noindent
\emph{Proof of Claim 1.}  
The hyperbolic metric on $M$ lifts to a hyperbolic cone metric $h$ on 
$X$ which is smooth away from $\Sigma$ and with cone angle $2 d\pi$ along $\Sigma$. 
Thus $h$ is locally a ${\rm CAT}(-1)$-metric. 
Therefore every homotopy class
$\alpha\in \pi_1(X,x)$ has a unique geodesic representative for the metric $h$ which 
is a geodesic loop based at $x$.
The map $\phi$ is an isometry for $h$ fixing 
$Z$ pointwise.

Note that
the inclusion $\pi_1(Z,x) \leqslant {\rm Fix}(\phi_\ast)$ trivially holds. 
To prove equality, we argue by contradiction and assume that there exists a class 
$[\gamma] \in {\rm Fix}(\phi_\ast) \setminus \pi_1(Z,x)$. 
This class is represented by a unique geodesic loop $\gamma$ based at $x$ that does 
\textit{not} entirely lie in $Z$. As $\phi$ is an isometry, $\phi(\gamma)$ 
is the geodesic representative of $\phi_\ast([\gamma])=[\gamma]$, 
and hence $\phi(\gamma)=\gamma$ by uniqueness. As $Z$ is a connected component of 
${\rm Fix}(\phi)$, we get $\gamma \subseteq Z$, contradicting 
$[\gamma] \notin \pi_1(Z,x)$. 
\hfill$\blacksquare$

\smallskip

The argument in the proof of Claim 1 also applies to the 
map $\phi^\#$ as an isometry for the hyperbolic metric on $X$ and shows that if 
$Z^\#$ is any component of the fixed point set ${\rm Fix}(\phi^\#)$ of $\phi^\#$,
which is a totally geodesic submanifold of $X$, and if
$y\in Z^\#$, then $\pi_1(Z^\#,y)={\rm Fix}(\phi^\#_\ast)\subset \pi_1(X,y)$.  

Since $\phi^\#$ has fixed points, by changing the hyperbolic metric with an isotopy, 
that is, replacing it by its pullback by a diffeomorphism of $X$ isotopic to the identity,
we may assume that $x\in {\rm Fix}(\phi^\#)$. Then $\phi^\#$ induces an automorphism
$\phi_*^\#$ of $\pi_1(X,x)$. 

\smallskip\noindent
{\bf Claim 2:} \emph{Let $[\gamma]\in \pi_1(Z,x)$ and let $\gamma^\#$ be the closed geodesic 
for the hyperbolic metric which is freely homotopic to $\gamma$; then 
$\gamma^\#\subset {\rm Fix}(\phi^\#)$.}

\smallskip\noindent
\emph{Proof of Claim 2.} 
Since $\phi$ and $\phi^\#$ are homotopic, there exists an element 
$\alpha\in \pi_1(X,x)$ such that $\phi_*^\#=\alpha \phi_*\alpha^{-1}$. 
As $[\gamma]\in \pi_1(Z,x)\subset {\rm Fix}(\phi_*)$ we know that
$\phi_*^\#([\gamma])$ is conjugate to $[\gamma]$. In other words, $\phi_*^\#$ preserves
the conjugacy class of $[\gamma]$.

Let $\gamma^\#$ be the unique oriented closed geodesic for the hyperbolic metric on $X$ 
in the free homotopy class of $[\gamma]$. 
Since $\phi^\#$ preserves the conjugacy class of $[\gamma]$ and is an isometry, it 
preserves $\gamma^\#$ as an oriented unparameterized circle. 
%We claim that  
%$\gamma^\#\subset {\rm Fix}(\phi^\#)$.

We argue by contradiction and we assume that $\gamma^\#\not\subset {\rm Fix}(\phi^\#)$.
%Then $\beta^\#$ contains only finitely many fixed points of $\phi^\#$. 
If there are no fixed points of $\phi^\#$ on $\gamma^\#$ then as $\phi^\#$ preserves the 
hyperbolic norm of 
the tangent of $\gamma^\#$, it acts on the immersed  circle
$\gamma^\#\subset X$ as a nontrivial rotation. Then $\gamma^\#$ admits a lift $\tilde \gamma^\#$ 
to the universal covering $\mathbb{H}^n$ of $X$ so that a lift $\widetilde{\phi^\#}$ of $\phi^\#$ 
acts on $\tilde \gamma^\#$ as
a nontrivial translation. But any isometry of $\mathbb{H}^n$ which preserves a geodesic and 
acts on it as a non-trivial translation is loxodromic and hence fixed point free. 
This violates the fact that $\phi^\#$ and hence $\widetilde{\phi^\#}$ have fixed points.

As a conclusion, the restriction of $\phi^\#$ to $\gamma^\#$ has 
fixed points. Let $y\in \gamma^\#$ be such a fixed point. Since $\phi^\#$ preserves 
$\gamma^\#$ as a set, the differential 
$d_y\phi^\#$ of 
$\phi^\#$ at $y$ maps the (oriented) tangent $v$ of $\gamma^\#$ at $x$ to $\pm v$. If $d_y\phi^\#(v)=v$
then $\gamma^\#\subset {\rm Fix}(\phi^\#)$ 
since an isometry maps geodesics parameterized
by arc length to geodesics parameterized by arc length, and geodesics are determined by
their tangent at a single point. This contradicts the assumption $\gamma^\#\not\subset {\rm Fix}(\phi^\#)$.

Therefore 
$d_y\phi^\#(v)=-v$ and $\phi^\#$ reverses the orientation of $\gamma^\#$. 
Then 
$[\gamma^\#]$ is conjugate to its inverse in $\pi_1(X,x)$. 
This is equivalent to stating that there exists an element of $\pi_1(X,x)$ acting as the deck group 
of $X$ on $\mathbb{H}^n$ which exchanges the endpoints in the ideal boundary 
$\partial \mathbb{H}^n$ of $\mathbb{H}^n$ 
of a lift of $\gamma^\#$, 
contradicting the fact that any isometry with this property has a fixed point. Together this
completes the proof of Claim 2.
\hfill $\blacksquare$

\smallskip\noindent
{\bf Claim 3:} \emph{Up to changing the hyperbolic metric with an isotopy, we have
$\phi_*=\phi_*^\#$, in particular
${\rm Fix}(\phi_*^\#)={\rm Fix}(\phi_*)$.}

\smallskip\noindent
\emph{Proof of Claim 3.} 
Let $\gamma\subset Z$ be a (nontrivial) closed geodesic for the hyperbolic cone metric
$h$ on $X$. Note that such a geodesic exists since 
the dimension of each component of ${\rm Fix}(\phi)$ is 
at least two and $Z$ is totally geodesic for $h$. 
Let $x\in \gamma$ and let as before $\gamma^\#$ 
be the closed geodesic 
for the hyperbolic metric on $X$ which is freely homotopic to $\gamma$. 

Choose a point $x^\#\in \gamma^\#$ and an embedded arc 
$a:[0,1]\to X$, smooth up to and including the 
endpoints, which connects $x$ to $x^\#$ and such that $a \circ \gamma \circ a^{-1}$ 
(read from right to left) is homotopic 
to $\gamma^\#$ in $\pi_1(X,x^\#)$. Let $N$ be a tubular neighborhood
of $a$. There exists a smooth isotopy $[0,1]\times X\to X$ of $X$ which is the identity outside of 
$N$ and pushes the point $x$ along $a$. Let $\Lambda$ be the endpoint map of this isotopy. 
Then $\Lambda$ maps $\gamma$ to a based loop at $x^\#$ which is homotopic to 
$\gamma^\#$ and hence up to replacing the hyperbolic metric by its pull-back under $\Lambda$, 
we may assume that $x\in \gamma^\#$ and that the homotopy classes of 
$\gamma$ and $\gamma^\#$ in $\pi_1(X,x)$ coincide. 

Recall that $\phi_*^\#=\alpha \phi_*\alpha^{-1}$ for some $\alpha\in \pi_1(X,x)$
(the element $\alpha$ may have changed in the course of this proof). 
Since 
\[[\gamma]=[\gamma^\#]=\alpha [\gamma] \alpha^{-1}\]
we know that $\alpha$ centralizes the homotopy class $[\gamma]$ of 
$\gamma$. As $\pi_1(X,x)$ is the fundamental group of a hyperbolic manifold,
the centralizer of $[\gamma]$ equals the infinite cyclic group generated by 
$[\gamma]$. In particular, $\phi_*(\alpha)=\alpha$ since $[\gamma] \in {\rm Fix}(\phi_*)$, 
moreover $\phi^\#_*$ preserves the fixed point set $\pi_1(Z,x)$ of $\phi_*$. 

The component $Z$ of the fixed point set of $\phi$ is a closed oriented hyperbolic 
manifold of dimension at least two. Thus any nontrivial inner automorphism of 
$\pi_1(Z,x)$ has infinite order. Now $\phi^\#$ is an isometry of finite order 
and hence the order of $\phi_*^\#$ is finite as well. But 
$\phi_*^\#(\beta)=\alpha \beta \alpha^{-1}$ for all $\beta\in \pi_1(Z,x)$ and consequently
$\alpha=e$ and $\phi_*=\phi^\#_*$. 
This completes the proof of Claim 3.
\hfill$\blacksquare$

\smallskip

We showed so far that 
for every component 
$Z$ of ${\rm Fix}(\phi)$ there exists a component $Z^\#$ of ${\rm Fix}(\phi^\#)$
whose fundamental group 
is isomorphic to the fundamental group of $Z$.
Each component $Z$ of ${\rm Fix}(\phi)$ and corresponding component $Z^\#$ of ${\rm Fix}(\phi^\#)$
is naturally equipped with a hyperbolic metric. 
Its dimension equals the cohomological dimension of its fundamental group.
Thus if the dimension of $Z$ is at least three, then by Mostow rigidity, the manifolds 
$Z$ and $Z^\#$ are isometric and freely homotopic inside $X$.
If the dimension of $Z$ equals two then the manifolds $Z$ and $Z^\#$ are diffeomorphic
as the diffeomorphism type of a closed surface is determined by its fundamental group. 
Furthermore, $Z$ and $Z^\#$ are freely homotopic inside $X$. 

It remains to show that there is no component of ${\rm Fix}(\phi^\#)$ which is not freely 
homotopic to a component of ${\rm Fix}(\phi)$. This is carried out in the rest of this proof.

\smallskip\noindent
{\bf Claim 4:} \emph{$X\setminus {\rm Fix}(\phi)$ is aspherical.}

\smallskip\noindent
\emph{Proof of Claim 4.} 
As $X$ is a closed hyperbolic manifold by assumption,
its universal covering $\tilde X$ is diffeomorphic to $\mathbb{R}^n$. Furthermore, 
${\rm Fix}(\phi)\subset X$ is an embedded closed 
totally geodesic submanifold for the ${\rm CAT}(-1)$-hyperbolic cone metric $h$ on $X$ 
whose codimension  either equals 
one or two. The preimage $Y$ of $X\setminus {\rm Fix}(\phi)$ in $\tilde X$ 
is the complement in $\tilde X$ of a countable union of properly embedded submanifolds diffeomorphic
either to $\mathbb{R}^{n-1}$ or to $\mathbb{R}^{n-2}$.

If the codimension of these subspaces equals one then
$Y$ is a disjoint union of countably many contractible spaces. If the codimension of
these subspaces equals two then $Y$ is 
homotopy equivalent to the wedge of countably
many circles, each corresponding to a loop encircling one of the codimension two complementary subspaces.
Hence $Y$ is aspherical. Since $Y$ is a covering of $X\setminus {\rm Fix}(\phi)$, the space 
$X\setminus {\rm Fix}(\phi)$ is aspherical as well. 
%
%Note that the codimension of the components of ${\rm Fix}(\phi)$ equals two only if ${\rm Fix}(\phi)=\Sigma$, and  
%it is not hard to equip 
%$X\setminus \Sigma$ with a complete Riemannian metric of non-positive curvature using a warped 
%product construction. Observe to this end that since $\Sigma$ is a hyperplane in an embedded codimension one submanifold of $X$,
%its normal bundle  in $X$ is trivial. Hence 
%this normal bundle, with the zero section deleted,
%admits a complete non-positively curved metric 
%that is a product of a complete hyperbolic metric on the fiber, which is a punctured disk, and 
%of the hyperbolic metric the base. 
\hfill $\blacksquare$

\smallskip

\smallskip\noindent
{\bf Claim 5:} \emph{$X\setminus {\rm Fix}(\phi)$ has the homotopy type of a finite
CW-complex, and its fundamental group is center free.}

\smallskip\noindent
\emph{Proof of Claim 5.} 
There are two cases possible for the map $\phi$. In the first case, ${\rm Fix}(\phi)$ is a 
finute disjoint union of compact codimension one submanifolds 
in $X$, and in the second case, we have ${\rm Fix}(\phi)=\Sigma$. 
In both cases, $X\setminus {\rm Fix}(\phi)$ is 
homotopy equivalent to a compact 
manifold with boundary, which can be chosen to be the complement of a small open tubular neighborhood of 
${\rm Fix}(\phi)$. Hence $X\setminus {\rm Fix}(\phi)$ 
has the homotopy type of a finite CW-complex.

To see  that $\pi_1(X\setminus {\rm Fix}(\phi))$ is center free, note that  
if ${\rm Fix}(\phi)$ is a disjoint union of hyperplanes, then
cutting 
$X$ open along the corresponding components of ${\rm Fix}(\phi^\#)$ yields a 
(possibly disconnected) compact
hyperbolic manifold $N$ with totally geodesic boundary whose fundamental group is a torsion free hyperbolic
group and hence center free. 

If ${\rm Fix}(\phi)=\Sigma$ then putting $G=\pi_1(X\setminus {\rm Fix}(\phi))$,  
the homomorphism
$\rho:G\to \pi_1(X)$ induced by the inclusion $X\setminus {\rm Fix}(\phi)\to X$ is surjective. Thus 
as $\pi_1(X)$ is torsion free and center free, 
an element in the center of 
$G$ is contained in the kernel of the homomorphism $\rho$ and hence it is contained in 
the center of ${\rm ker}(\rho)$. 
But the kernel of $\rho$ 
is the fundamental group of the preimage $Y$ 
of $X\setminus \Sigma$ in the universal covering $\mathbb{H}^n$ of $X$. As 
$Y$ has the homotopy type of a countable wedge of circles, this fundamental group 
is an infinitely generated free group and hence center free. 
\hfill$\blacksquare$

\smallskip

The following claim completes the proof of the proposition.

\smallskip\noindent
{\bf Claim 6:} \emph{There can not be any component of
${\rm Fix}(\phi^\#)$ that is not freely homotopic to a component of ${\rm Fix}(\phi)$. }

\smallskip\noindent
\emph{Proof of Claim 6.} 
We showed so far that there exists a union $Q$ of components of ${\rm Fix}(\phi^\#)$ which is 
abstractly diffeomorphic to ${\rm Fix}(\phi)$ and freely
homotopic to ${\rm Fix}(\phi)$ in $X$. The manifolds 
$X\setminus {\rm Fix}(\phi)$ and 
$X\setminus Q$ have isomorphic fundamental groups, and
by Claim 4 and its analog for $X\setminus Q$, they are 
aspherical. As a consequence, $X\setminus {\rm Fix}(\phi)$ and $X\setminus Q$
are homotopy equivalent. 

Let $\Lambda:X\setminus {\rm Fix}(\phi)\to X\setminus Q$ be a homotopy equivalence,
with homotopy inverse $\Lambda^{-1}$. 
The map $\phi$ acting on $X\setminus {\rm Fix}(\phi)$ is homotopic to the 
map $\hat \phi=\Lambda^{-1}\circ \phi^\#\circ \Lambda$, read from right to left.
Thus via an identification of $\pi_1(X\setminus {\rm Fix}(\phi))$ with 
$\pi_1(X\setminus Q)$ via the homotopy equivalence $\Lambda$, the maps $\phi$ and 
$\phi^\#$ induce the same outer automorphisms of $\pi_1(X\setminus {\rm Fix}(\phi))$. 
Furthermore, by construction, $\phi$ and $\phi^\#$ have the same order, say $m\geq 2$.

We now follow \cite[Lemma 2.2]{HJ24}.  
The finite order diffeomorphism $\phi$ restricts to a fixed point free finite order
diffeomorphism on $X\setminus {\rm Fix}(\phi)$. 
Let $\bar X=\langle \phi\rangle\backslash (X\setminus {\rm Fix}(\phi))$ 
be the quotient of $X$ under the free
action of $\phi$. There exists an exact sequence
\[1\to \pi_1(X\setminus {\rm Fix}(\phi)) \to \pi_1(\bar X)\to \mathbb{Z}/m\mathbb{Z}\to 1.\]
Since $\phi$ and $\phi^\#$ induce the same outer automorphism of $\pi_1(X\setminus {\rm Fix}(\phi))$, 
this sequence splits if the map $\phi^\#$ acting on $X\setminus Q$
has a fixed point. However, as $\pi_1(X\setminus {\rm Fix}(\phi))$ is center free
by Claim 5, if the sequence splits then 
$\mathbb{Z}/m\mathbb{Z}$ is a subgroup of $\pi_1(\bar X)$, which is impossible as
$X\setminus {\rm Fix}(\phi)$ and hence $\bar X$ has the homotopy type of a finite CW complex
by Claim 5. We refer to \cite[Lemma 2.2]{HJ24} for more information on this line of argument.
As a conclusion, the action of $\phi^\#$ on $\pi_1(X\setminus Q)$ is fixed point free, completing the proof of Claim 6.\hfil$\blacksquare$ 

This completes the proof of \Cref{fixed}.
\end{proof} 

\begin{remark}\label{alwaysvalid}
The above proof is valid for all covers $X$ of a hyperbolic manifold $M$, branched along
a totally geodesic nullhomologous submanifold $\Sigma$ of codimension two. 
It shows that if $X$ admits a hyperbolic 
metric, then the fixed point set of an isometry of $X$ homotopic to 
an element of the deck group of $X\to M$
is diffeomorphic to the branch locus $\Sigma$, thus confirming \cite{GT87}.  
\end{remark}

\subsection{The proof of Theorem \ref{prop: no hyperbolic metric}}\label{even}
In this subsection we assume as before that 
$X$ admits a hyperbolic metric. 
Let $j$ be the involution of $X$ 
described in Fact \ref{factfixed},
and $\zeta$ be the generator of the deck group
of $X\to M$ which cyclically permutes the copies $M_{\rm cut}^1,\dots,M_{\rm cut}^d$ of $M_{\rm cut}$ in $X$. 
%The union $\cup_i \zeta^i(F)$ 
%of the corresponding components of the 
%fixed point sets of the involutions $\zeta^i \circ j \circ \zeta^{-i}$ cut $X$ up into $d$ copies of the manifold $N$ 
%from \Cref{prop:boundaryrigidity}. 

From now on we always denote by $F$ the component of ${\rm Fix}(j)$ containing $\Sigma$ and by
$F^\#$ the homotopic component of ${\rm Fix}(j^\#)$ whose existence was shown in Proposition 
\ref{fixed}. 
By \Cref{fixed} and Mostow rigidity for closed hyperbolic manifolds of 
dimension $n-1\geq 3$, there exists an isometry $\psi:F\to F^\#$ which maps 
$\Sigma$ to the fixed point set $\Sigma^\#$ of $\zeta^\#$. Furthermore, with a homotopy we may
identify $\Sigma$ and $\Sigma^\#$ in $X$. 
For each $i=0,\dots,d-1$, the map $(\zeta^\#)^i\circ \psi \circ \zeta^{-i}$ maps
$\zeta^{i}(F)$ isometrically onto $(\zeta^\#)^i(F^\#)$.

%there exists an isometry $\psi_i:\zeta^i(F) \xrightarrow{\cong} (\zeta^\#)^i(F^\#)$ such that the maps
%\[
%	{\rm incl}_{\zeta^i(F)}: \zeta^i(F) \hookrightarrow X 
%	\quad \text{and} \quad 
%	{\rm incl}_{(\zeta^\#)^i(F^\#)} \circ \psi_i: \zeta^i(F) \xrightarrow{\cong} (\zeta^\#)^i(F^\#) \hookrightarrow X
%\]
%are homotopic. As 
%$(\zeta^\#)^i(F^\#) \subseteq X$ is totally geodesic,
%$\psi_i(\Sigma) \subseteq X$ is a closed totally geodesic submanifold of codimension two which is in the same free homotopy class as $\Sigma$. %Since each free homotopy class can contain at most one totally geodesic representative, this implies that $\psi_0(\Sigma)=\dots=\psi_{d-1}%(\Sigma)$. We define $\Sigma^\#\subset X$ to be this closed totally geodesic submanifold of codimenson two. So, by construction, $\Sigma^\# \subseteq (\zeta^\#)^i(F^\#)$ for all $i=0,\dots,d-1$.

%We shall show that the union $\cup_i (\zeta^\#)^i(F^\#)$ of these submanifolds of $X$ 
%cut $X$ into $d$ sectors homotopy equivalent to $M_{\rm cut}$ if $d$ is even, and 
%into $2d$ sectors homotopy equivalent
%to $N$ if $d$ is odd. This yields Proposition \ref{prop:boundaryrigidity} if $d$ is odd. 
%If $d$ is even then such a sector 
%contains an embedded isometric copy of $H_1$ which intersects the boundary of 
%$M_{\rm cut}^\#$ in $\Sigma^\#$. Cutting $M_{\rm cut}^\#$ along this copy of $H_1$ 
%then yields a hyperbolic cone manifold with the properties stated in 
%Proposition \ref{prop:boundaryrigidity}. 

%To ease notations put $F={\rm Fix}(j)$ and 
%$F^\#={\rm Fix}(j^\#)$. 

After possibly changing the hyperbolic metric of $X$ with an isotopy, we may assume that for each connected component $\Sigma_0$ of $\Sigma$ we have $\Sigma_0 \cap \Sigma_0^\# \neq \emptyset$, where $\Sigma_0^\#=\psi_0(\Sigma_0)$. So, for each component, we can fix a basepoint $x_0 \in \Sigma_0 \cap \Sigma_0^\#$, 
and we may assume without loss of generality that $\psi_0(x_0)=x_0$. We call such a basepoint \emph{preferred}. 
Due to \Cref{fixed}, we may also assume that
\[
    \pi_1(\Sigma_0,x_0)=\pi_1(\Sigma_0^\#,x_0)
    \quad \text{and} \quad
    \pi_1(F,x_0)=\pi_1(F^\#,x_0).
\]
%By changing $\Sigma$ and $F$ with a homotopy, we can make sure that there exists a point $x_0\in X$ which is contained in a component $\Sigma_0$ of $\Sigma$ and equals its image under $\phi$ in the component $\Sigma_0^\#$ of $\Sigma^\#$ containing $\phi(x_0)$. We call such a basepoint \emph{preferred}. With another homotopy we also may assume that $\pi_1(\Sigma_0,x_0)=\pi_1(\Sigma_0^\#,x_0)$ and that $\pi_1(F,x_0)=\pi_1(F^\#,x_0)$. 
In the sequel, the fundamental
group $\pi_1(X,x_0)$ will always be represented with respect to a fixed choice $x_0$ of 
preferred basepoint. 
%As $\zeta$ fixes $\Sigma$ pointwise, it induces an automorphism
%$\zeta_*$ of $\pi_1(X,x_0)$. The following lemma summarizes the results from Section \ref{fixedpointset} 
%applied to the elements of the group generated by $j$ and $\zeta$ 
%(and is valid after perhaps changing 
%the hyperbolic metric with an isotopy). 

%\begin{lem}\label{intersect}
%Let $\zeta^\#$ and $\Sigma^\#$ be as defined above.
%\begin{enumerate}[(i)]
%\item We have ${\rm Fix}(\zeta^\#)=\Sigma^\#$. 
%%Moreover, the differential of $\zeta^\#$ acts on the 
%%normal bundle of $\Sigma^\#$ as a rotation with rotation angle $2\pi/d$. 
%\item We have $\zeta_\ast^\#=\zeta_\ast$ and
%There exists $\alpha\in \pi_1(\Sigma,x_0)$ such that for all $i<d$ it holds $\alpha^i (\zeta_*)^i \alpha^{-i}=(\zeta^\#_*)^i$, in particular 
%\(
%	\pi_1\big((\zeta^\#)^i(F^\#),x_0\big)=\pi_1\big(\zeta^i(F),x_0\big)
%\)
%for all $i=0,\dots,d-1$.
%\end{enumerate}
%\end{lem}

%For the formulation of $(ii)$ note that by $(i)$ the map $\zeta^\#$ fixes $x_0 \in \Sigma^\#$, and thus induces an automorphism $\zeta_\ast^\#$ of %$\pi_1(X,x_0)$.

Although by \Cref{fixed}, the cyclic group generated by $\zeta^\#$ acts freely on $X\setminus \Sigma^\#$
and the manifold $F^\#$ is homotopic to $F$, this does not necessarily imply that 
$\zeta^\#(F^\#)\cap F^\#=\Sigma^\#$. The following lemma takes care of this issue.

\begin{lem}\label{intersection}
\begin{enumerate}
    \item The differential 
    of $\zeta^\#$ acts on the normal bundle of $\Sigma^\#$ by a rotation 
    with angle $2\pi/d$. 
\item We have $F^\#\cap \zeta^\#(F^\#)=\Sigma^\#$.
\end{enumerate}
\end{lem}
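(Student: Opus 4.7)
The plan is to exploit the factorization $\zeta^\# = j^\# \iota^\#$, which follows from $\zeta = j\iota$ (as $\iota$ is an involution) together with the homomorphism property of $\sigma \mapsto \sigma^\#$. Applying \Cref{fixed} separately to $\iota$ and $j$, both $\iota^\#$ and $j^\#$ are orientation-reversing isometric involutions of $X$ whose fixed sets are codimension-one totally geodesic submanifolds freely homotopic to those of $\iota$ and $j$. Since $\Sigma^\# \subseteq F^\# = \mathrm{Fix}(j^\#)$ pointwise, for every $x \in \Sigma^\#$ we have $\iota^\#(x) = j^\#(\zeta^\#(x)) = j^\#(x) = x$, so $\Sigma^\# \subseteq \mathrm{Fix}(\iota^\#)$; let $W^\#$ denote the component of $\mathrm{Fix}(\iota^\#)$ containing $\Sigma^\#$.

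For (1), fix $x \in \Sigma^\#$. Both $d_xj^\#$ and $d_x\iota^\#$ are orientation-reversing isometric involutions of $T_xX$ fixing $T_x\Sigma^\#$ pointwise, so they restrict to reflections of the two-dimensional normal space $N_x\Sigma^\#$ in the lines $\ell_F = T_xF^\# \cap N_x\Sigma^\#$ and $\ell_W = T_xW^\# \cap N_x\Sigma^\#$ respectively. Hence $d_x\zeta^\# = d_xj^\# \circ d_x\iota^\#$ restricted to $N_x\Sigma^\#$ is a rotation by twice the angle between $\ell_F$ and $\ell_W$. Since $(\zeta^\#)^d = (\zeta^d)^\# = \mathrm{id}$, and since by \Cref{fixed} any $(\zeta^\#)^m$ for $0 < m < d$ has nonempty fixed set homotopic to that of $\zeta^m$ and is therefore not the identity, the order of $\zeta^\#$ is exactly $d$. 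Thus the rotation angle equals $2\pi k/d$ for some $k$ coprime to $d$. To obtain $k=1$, the plan is to compare with the topological model: on a tubular neighborhood of $\Sigma$ the deck transformation $\zeta$ is explicitly rotation by $2\pi/d$, which pins down the homotopy class of the normal-bundle action, and because $\zeta^\#$ is homotopic specifically to $\zeta$ (not to any other power), it must realize the same rotation angle on $N\Sigma^\#$.

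For (2), the rotation angle $2\pi/d$ is nonzero modulo $\pi$ since $d \geq 4$, hence $\zeta^\#(F^\#) \neq F^\#$, and $F^\# \cap \zeta^\#(F^\#)$ is a disjoint union of totally geodesic codimension-two submanifolds of $X$, one of which is $\Sigma^\#$, where the two hyperplanes meet transversely at angle $2\pi/d$. To rule out further components, the plan is to work in the universal cover $\bbH^n$: components of the intersection in $X$ correspond bijectively to double cosets $\mathrm{Stab}_\Gamma(\tilde F) \backslash \{g \in \Gamma : \tilde F \cap g\tilde G \neq \emptyset\} / \mathrm{Stab}_\Gamma(\tilde G)$, where $\tilde F, \tilde G \subset \bbH^n$ are lifts of $F^\#$ and $\zeta^\#(F^\#)$. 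This count is a $\pi_1$-theoretic invariant preserved under the identifications of \Cref{fixed} up to conjugation. For the topological pair $(F, \zeta(F))$ it is immediate from the cut-and-paste description: the two copies of $H_0$ forming $F$ lie in $M^{1}_{\mathrm{cut}} \cup M^{1+d/2}_{\mathrm{cut}}$, while those forming $\zeta(F)$ lie in $M^{2}_{\mathrm{cut}} \cup M^{2+d/2}_{\mathrm{cut}}$, and these four pieces overlap only along $\Sigma$ once $d \geq 4$. Transferring this single-component count to the hyperbolic setting then yields $F^\# \cap \zeta^\#(F^\#) = \Sigma^\#$.

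The main obstacle is the $\pi_1$-theoretic transfer of the intersection count from the topological $X$ to the hyperbolic one: \Cref{fixed} controls the fundamental groups of the fixed submanifolds up to conjugation in $\pi_1(X)$, but the double-coset count requires keeping track of the relative positions of the pair of subgroups $\pi_1(F^\#)$ and $(\zeta^\#)_\ast \pi_1(F^\#) (\zeta^\#)_\ast^{-1}$ coherently, which is the delicate step. A secondary, smaller difficulty in (1) is establishing that the rotation index equals $1$ rather than another unit modulo $d$; this again reduces to a careful $\pi_1$-theoretic extraction from the homotopy $\zeta \simeq \zeta^\#$ restricted to a tubular neighborhood of the branch locus.
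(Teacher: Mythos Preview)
Your plan for part~(2) is essentially the paper's: both count components of $F^\#\cap\zeta^\#(F^\#)$ by a $\pi_1$-theoretic invariant (you phrase it as double cosets, the paper as $\pi_1(F^\#,x_0)$-orbits of intersections of boundary spheres in $\partial\mathbb{H}^n$) and then transfer the count from $(F,\zeta(F))$ using the identifications $\pi_1(F,x_0)=\pi_1(F^\#,x_0)$ and $\pi_1(\zeta F,x_0)=\pi_1(\zeta^\# F^\#,x_0)$ supplied by Proposition~\ref{fixed}. The paper makes the transfer concrete via limit sets: the boundary sphere of a lift of $F^\#$ is the limit set of the stabilizer subgroup, hence determined by $\pi_1$-data alone, so the intersection pattern of the lifted hyperplanes at infinity is identical for $F,\zeta(F)$ and $F^\#,\zeta^\#(F^\#)$.

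For part~(1), your factorization $\zeta^\#=j^\#\iota^\#$ is a pleasant observation, but note that it buys nothing beyond what the paper gets directly: that $d_x\zeta^\#$ restricted to $N_x\Sigma^\#$ is a rotation of order exactly $d$, hence by angle $2\pi k/d$ with $\gcd(k,d)=1$. The paper obtains this simply from $\zeta^\#$ being an orientation-preserving isometry of order $d$ fixing $\Sigma^\#$ pointwise.

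The real content is showing $k=1$, and here your proposal has a genuine gap. Saying the normal-bundle action of $\zeta$ ``pins down the homotopy class'' is not enough: a rotation of a two-plane by $2\pi k/d$ is homotopic to the identity regardless of $k$, so the homotopy $\zeta\simeq\zeta^\#$ alone cannot distinguish $k=1$ from other units modulo $d$. One needs an invariant that sees the \emph{cyclic ordering} of the hyperplanes $(\zeta^\#)^iF^\#$ around $\Sigma^\#$. The paper extracts exactly this from the same limit-set picture used in part~(2): the pullback of the hyperbolic metric to a ${\rm CAT}(-1)$ cone metric $h$ on $X$ is quasi-isometric to the smooth hyperbolic metric, so they share the same Gromov boundary $\partial\mathbb{H}^n$; in this boundary the spheres $\partial\widetilde{\zeta^iF}$ (for the cone metric) and $\partial\widetilde{(\zeta^\#)^iF^\#}$ (for the hyperbolic metric) coincide, since both are limit sets of the same subgroups. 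For the cone metric one sees directly that there is a component of $\partial\mathbb{H}^n\setminus(\partial\tilde F\cup\zeta(\partial\tilde F))$ disjoint from all other $\zeta^i(\partial\tilde F)$, namely the one over the single piece $M^1_{\rm cut}$. Transported to the unit tangent sphere at $\tilde x_0$ for the hyperbolic metric, this says there is a sector between $T_{\tilde x_0}F^\#$ and $d\zeta^\#(T_{\tilde x_0}F^\#)$ containing no other iterate, which forces the rotation angle to be $2\pi/d$. This boundary-at-infinity argument is the missing ingredient in your outline; your ``homotopy class of the normal-bundle action'' would have to be made precise in exactly this way.
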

\begin{proof} We begin with the proof of the second part of the lemma. 
We may assume that $\pi_1(F,x_0)=\pi_1(F^\#,x_0)$ and $\pi_1(\zeta(F),x_0)=
\pi_1(\zeta^\#(F^\#),x_0)$. Thus for a choice of lift $\tilde x_0$ of $x_0$ to the universal covering 
$\mathbb{H}^n$, limit sets of these groups in the ideal boundary $\partial \mathbb{H}^n$
and of their conjugates, acting as subgroups of the deck group,  coincide.

Let $\widetilde{ F^\#}\subset \mathbb{H}^n$  
and $\widetilde{ \zeta^\#F^\#}\subset \mathbb{H}^n$ be the (unique) lifts of $F^\#$ and $\zeta^\#( F^\#)$, respectively, which 
pass through $\tilde x_0$. Each component of $F^\#\cap \zeta^\#(F^\#)$, which is a totally geodesic embedded 
hyperplane in $F^\#$, lifts to precisely one  
$\pi_1(F^\#,x_0)$-orbit of intersections of $\widetilde{ F^\#}$ with $\pi_1(X,x_0)(\widetilde{\zeta^\#F^\#})$ 
(using the deck group action) and hence
to a $\pi_1(F^\#,x_0)$-orbit of intersections of the boundary sphere of $\widetilde{ F^\#}$ with the boundaries of the 
hyperplanes in the orbit of $\widetilde{\zeta^\#F^\#}$. These boundary spheres are precisely the limit sets of the conjugates of the 
group $\pi_1(\zeta^\#(F^\#),x_0)=\pi_1(\zeta(F),x_0)$ in $\partial \mathbb{H}^n$. 
Since $F\cap \zeta(F)=\Sigma$, the number of $\pi_1(F^\#,x_0)$-orbits of such intersection spheres is at most the number
of components of $\Sigma=\Sigma^\#$. Thus we have $F^\#\cap \zeta^\#(F^\#)=\Sigma^\#$ which completes the 
proof of the second part of the lemma. 

Let $\Sigma_0^\#$ be a component of $\Sigma^\#$. This is a totally geodesic submanifold of $X$ of 
codimension two contained in the fixed point set of $\zeta^\#$. 
Since $\zeta^\#$ is a non-trivial orientation preserving isometry of $X$ of order $d$, 
its differential acts on the normal bundle of $\Sigma_0^\#$ as a rotation with rotation angle
$2\pi k/d$ where $k$ is a generator of the cyclic group of order $d$. We have to show that 
$k=1$.

Consider again lifts $\tilde \zeta,\tilde \zeta^\#$
of $\zeta,\zeta^\#$ to the universal covering $\mathbb{H}^n$ of $X$, chosen so that 
they fix pointwise the same component $\widetilde {\Sigma_0^\#}$ of the universal covering of $\Sigma_0^\#$, which 
is a totally geodesic subspace of $\mathbb{H}^n$ of codimension two. The differential of 
$\tilde \zeta^\#$ acts on the normal bundle of $\widetilde{\Sigma^\#}$ as a rotation with rotation
angle $2\pi k/p$.

The choice of basepoint $\tilde x_0\in \widetilde{\Sigma_0^\#}$ 
determines
an identification of the unit tangent sphere $T^1_{\tilde x_0}\mathbb{H}^n$ of $\mathbb{H}^n$ at
$\tilde x_0$ with the ideal boundary $\partial \mathbb{H}^n=S^{n-1}$ of 
$\mathbb{H}^n$ by associating to a unit tangent vector $v$ the equivalence class of the geodesic ray 
with initial velocity $v$. 
The limit set
in $\partial \mathbb{H}^n=S^{n-1}$ of the stabilizer of $\widetilde{F^\#}$ 
in the deck group $\pi_1(X,x_0)$ equals the 
boundary $\partial \widetilde{F^\#}$
of $\widetilde{F^\#}$, which is an equator sphere of codimension one in $T_{\tilde x_0}\mathbb{H}^n=
\partial \mathbb{H}^n$. It contains the ideal boundary 
of $\widetilde{\Sigma_0^\#}$ as an equator sphere. 
We also know that $\zeta^\#(\partial \widetilde{F^\#})$ coincides with the limit set 
$\zeta(\partial \tilde  F)$ of the 
group 
$\pi_1(\zeta(F),x_0)$ acting on $\mathbb{H}^n$.

Now recall that $\zeta$ acts as an isometry with respect to the ${\rm CAT}(-1)$ hyperbolic cone
metric $h$ on $X$, which is quasi-isometric to the hyperbolic metric, and it acts as 
a cyclic permutation on the totally geodesic 
submanifolds $\zeta^i(F)$. Thus viewing $\partial \mathbb{H}^n$ 
as the ideal boundary of the universal covering of $X$, equipped with the hyperbolic cone metric $h$, 
we obtain that 
there is a component of $\partial \mathbb{H}^n\setminus (\partial \tilde F\cup \zeta (\partial \tilde F))$ 
which  
does not intersect any of the spheres $\zeta^i(\partial \tilde{F})$. By identifying 
$\partial \widetilde{F^\#}$ with the unit tangent sphere of $\widetilde{F^\#}$
at $\tilde x_0$, which is 
an equator sphere in $T^1_{\tilde x_0}\mathbb{H}^n$, 
and 
$\zeta^\#(\partial \widetilde{F^\#})$ with the unit tangent space of 
$\zeta^\#(F^\#)$ at $\tilde x_0$, we deduce that there is a component of 
$T_{\tilde x_0}^1\mathbb{H}^n\setminus (T^1_{\tilde x_0}F^\#\cup d\zeta^\#(T_{\tilde x_0}^1F^\#))$
not intersecting any of the spheres $d\zeta^\#(T^1_{\tilde x_0}F^\#)$ if and only if 
the differential of $\zeta^\#$ acts on the normal
bundle of $\Sigma_0^\#$ as a rotation with rotation angle $2\pi/d$. 
This completes the proof of the lemma. 
%We are left with showing that ${\rm Fix}(\zeta^\#)=\Sigma^\#$. 
%To this end we argue by contradiction and assume that there 
%exists a fixed point $y$ of $\zeta^\#$ in $X-\Sigma^\#$. 
%Let $\gamma$ be a minimal geodesic connecting 
%$y$ to $\Sigma^\#$. It meets $\Sigma^\#$ orthogonally at a point 
%$z\in \Sigma^\#$. 
%Since $\zeta^\#$ acts fixed point free on 
%$\cup_i (\zeta^\#)^i(F^\#)$, the tangent of $\gamma^{-1}$ at 
%$z$ is not contained in the union of the tangent spaces of the 
%hyperplanes $F^\#$. Thus $y$ is contained in a unique connected component 
%of $X-\cup_i(\zeta^\#)^i(F^\#)$. As $\zeta$ freely permutes these components, 
%this is a contradiction and completes the proof of the lemma.
\end{proof}

\begin{remark}
The proof of the first part of Lemma \ref{intersection} relies on the analysis of
limit sets of stabilizers of preimages of the totally geodesic hyperplane $H\subset M$. It
remains valid even if 
$H$ is not fixed by an isometric involution. 
\end{remark}

With these preliminary results at hand, we can now prove \Cref{prop: no hyperbolic metric}.

\begin{proof}[Proof of \Cref{prop: no hyperbolic metric}] 
By construction, the subspace $F\cup \zeta(F)$ of $X$ separates $X$.
By the definition of the map $j$, 
the complement $X-(F\cup \zeta(F))$ contains two connected components whose closures are homeomorphic 
to $M_{\rm cut}$.
Let $Z$ be the closure of such a component. 
Its boundary consists of 
two copies of $H_0$ glued along $\Sigma$. 

By Lemma \ref{intersection}, there exists a corresponding component $M_{\rm cut}^\#$ of 
$X-(F^\#\cup \zeta^\#(F^\#))$. The boundary of its closure $Z^\#$ 
is connected and consists of two copies of $H_0$ meeting 
along $\Sigma$ with an angle $2\pi/d$. Identifying $\Sigma$ and $\Sigma^\#$ as before and choosing
a basepoint $x\in \Sigma$, we claim that $\pi_1(Z,x)=\pi_1(Z^\#,x)$. 

Namely, by \Cref{fixed}, it holds that $\pi_1(\partial Z,x)=
\pi_1(\partial Z^\#,x)$. As $\partial Z$ is a separating hypersurface in 
$X$ homotopic to $\partial Z^\#$, by the theorem of Seifert-van Kampen, we know that 
\begin{equation*}\pi_1(X,x) =\pi_1(Z,x)*_{\pi_1(\partial Z,x)} \pi_1(X\setminus Z,x)
=\pi_1(Z^\#,x)
*_{\pi_1(\partial Z^\#,x)}\pi_1(X\setminus Z^\#,x).\end{equation*}
It then follows from the normal form for amalgamated products \cite[p.186]{LS01}  
that $\pi_1(Z^\#,x)$ is isomorphic to either 
$\pi_1(Z,x)$ or to $\pi_1(X\setminus Z,x)$. 

If $d=2$ then $\pi_1(Z,x)$ is isomorphic to $\pi_1(X\setminus Z,x)$ and the claim is clear. 
If $d\geq 3$ then note that $\zeta_*=\zeta_*^\#$ maps 
$\pi_1(Z,x)$ to a proper subgroup of $\pi_1(X\setminus Z,x)$, and it maps $\pi_1(X\setminus Z,x)$
to a proper supergroup of $\pi_1(Z,x)$. Furthermore, it maps 
$\pi_1(Z^\#,x)$ to a proper subgroup of $\pi_1(X\setminus Z^\#,x)$, and it maps 
$\pi_1(X\setminus Z^\#,x)$ to a proper supergroup of 
$\pi_1(Z^\#,x)$. Thus we have $\pi_1(Z,x)=\pi_1(Z^\#,x)$ as claimed.

We argue now by contradiction and we assume that there are distinct multiples 
of $d_1\not=d_2 \in 4\bbN$ such that the cyclic $d_i$-fold branched cover $X^{(d_i)}$ admits a smooth hyperbolic 
metric for $i=1,2$. Then, for each $i=1,2$, the above discussion 
implies that there exists a hyperbolic cone manifold $M_{\rm cut}^{2\pi/d_i}$ with totally geodesic boundary $\partial M_{\rm cut}^{2\pi/d_i}$ 
homeomorphic and path isometric to $\partial M_{\rm cut}$, with singular set isometric to $\Sigma$, cone angle $2\pi/d_i$ along $\Sigma$, and $\pi_1(M_{\rm cut}^{2\pi/d_i})=\pi_1(M_{\rm cut})$.

%Let $1\leq d_1< d_2$ and assume that the cyclic $4d_i$-fold covers $X_i=X_{4d_i}$ of $M$, branched along $\Sigma$, admit both a smooth hyperbolic metric. Then by Corollary \ref{cor:zetaper}, applied to both $X_1$ and $X_2$, there exists a hyperbolic cone metric on an aspherical manifold $M_{cut}^1,M_{cut}^2$ whose boundary is isometric to the totally geodesic embedded submanifold $F$ and whose singular locus equals $\Sigma\subset F$. The cone angle along $\Sigma$ is $\pi/2d_1, \pi/2d_2$, respectively. Furthermore, the manifolds $M_{cut}^1,M_{cut}^2$ are homotopy equivalent to $M_{cut}$. 

Note that $\frac{d_1}{2}\frac{2\pi}{d_1}+\frac{d_2}{2}\frac{2\pi}{d_2}=2\pi$.
%$2d_1\pi/{2d_1}+2d_2\pi/2d_2=2\pi$. 
Therefore, we can glue $d_1/2$ copies of $M_{\rm cut}^{2\pi/d_1}$ and 
$d_2/2$ copies of $M_{\rm cut}^{2\pi/d_2}$ in cyclic order along the components of $\partial M_{\rm cut}^{2\pi/d_i} \setminus \Sigma$ to a smooth hyperbolic manifold $Y$.
%(see \Cref{fig: 6 fold branched cover}).
An application of the Seifert--van Kampen theorem shows that the fundamental group 
of $Y$ is isomorphic to the fundamental group of the $(d_1+d_2)/2$-fold cyclic cover $X$ of $M$ branched along $\Sigma$. 
In particular, this fundamental group admits a finite group of automorphisms generated 
by an element $\zeta_\ast$ of order $(d_1+d_2)/2$ and an involution $j_\ast$ corresponding to the 
automorphisms induced by the homeomorphisms $\zeta$ and $j$ of $X$ (notations are as before). By the 
beginning of this proof, 
for each $i=0,\dots,(d_1+d_2)/2-1$, the fixed point group of $\zeta_\ast^i \circ j_\ast \circ \zeta_\ast^{-i}$ is the fundamental group of an embedded codimension one submanifold $F_i$ that, by construction of the hyperbolic metric on $Y$, is already totally geodesic.
%(see \Cref{fig: 6 fold branched cover}). 
Moreover, for some $i$ the totally geodesic submanifolds $F_i$ and $F_{i+1}$ intersect with angle $2\pi/d_1$, while for other $i$ they intersect with angle $2\pi/d_2$.

%(see \Cref{fig: 6 fold branched cover}).
%\begin{figure}
%	\begin{center}
%		\includegraphics[width=12cm]{Branched cover 2.png}
%		\caption{The case $d_1=4$ and $d_2=8$. The colored submanifolds $F_i$ satisfying $\pi_1(F_i)={\rm Fix}(\zeta_\ast^i \circ j_\ast %\circ \zeta_\ast^{-i})$ are already totally geodesic and intersect at $\Sigma$ with angles $\pi/2$ or $\pi/4$.}
%		\label{fig: 6 fold branched cover}
%	\end{center}
%\end{figure}

By Mostow rigidity, there exist isometries $\zeta^\#, j^\#$ of the hyperbolic manifold $Y$ of order $(d_1+d_2)/2$ and $2$, respectively, that induce the outer automorphism given by $\zeta_\ast$ and $j_\ast$. By \Cref{intersection}, the fixed point set of $\zeta^\#$ is a 
codimension two totally geodesic submanifold $\Sigma^\#$ freely homotopic to $\Sigma$, and thus $\Sigma^\#=\Sigma$ since $\Sigma$ is already totally geodesic in $Y$. 
%As any conjugacy class of an element of $\pi_1(\Sigma)$ is represented by a closed geodesic in $\Sigma^\#$, the totally geodesic submanifold $\Sigma^\#$ equals the fixed point set of $\zeta^\#$. 
Similarly, the fixed point set $(\zeta^\#)^i(F^\#)$ of the involution 
$(\zeta^\#)^i \circ j^\# \circ (\zeta^\#)^{-i}$
is a totally geodesic hyperplane freely homotopic 
%to the totally geodesic submanifolds corresponding to the boundaries of the manifolds $M_{cut}^1,M_{cut}^2$ and hence they equal these submanifolds. 
to the manifold $F_i$ satisfying $\pi_1(F_i)={\rm Fix}(\zeta_\ast^i \circ j_\ast \circ \zeta_\ast^{-i})$, and thus $(\zeta^\#)^i(F^\#)=F_i$ since $F_i$ is already hyperbolic. However, as $\zeta^\#$ acts by rotation with a fixed angle in the normal bundle of $\Sigma$, the intersection angle of $(\zeta^\#)^i(F^\#)$ and $(\zeta^\#)^{i+1}(F^\#)$ is the same for all $i$. But this contradicts the fact that, by construction, the intersection angle of $F_i$ with $F_{i+1}$ varies between $2\pi/d_1$ and $2\pi/d_2$,
%(see \Cref{fig: 6 fold branched cover}), 
completing the proof of the theorem.
\end{proof}
%\hfill$\blacksquare$
%the cone angles at $\Sigma$ of the cone metrics on $M_{cut}^1, M_{cut}^2$ do not coincide.

%\begin{rem}
%The same argument used in the proof of Proposition \ref{prop: no hyperbolic metric} also shows that among the coverings of degree $2+4k$ %$(k\geq 0)$, at most
%one can carry a hyperbolic metric. 
%\end{rem}

\begin{rem}
The proof of \Cref{prop: no hyperbolic metric} for branched covers of hyperbolic manifolds of 
dimension $n\geq 4$ 
depends in a crucial way on the validity of 
Mostow rigidity for closed hyperbolic manifolds of dimension $n-1$ 
and hence is not valid for $n=3$. It also shows that at most one covering of degree $d\equiv 2$ mod 4
can admit a hyperbolic metric. 
\end{rem}

\bigskip
\noindent
MATHEMATISCHES INSTITUT DER UNIVERSIT\"AT BONN\\
ENDENICHER ALLEE 60, 53115 BONN, GERMANY \\
email: ursula@math.uni-bonn.de 

\bigskip
\noindent
MATHEMATISCHES INSTITUT DER UNIVERSIT\"AT BONN\\
ENDENICHER ALLEE 60, 53115 BONN, GERMANY \\
email: fjaeckel@math.uni-bonn.de


\begin{thebibliography}{BHW11}

\bibitem[And90]{And90}
M.~T. Anderson.
\newblock {Convergence and rigidity of manifolds under Ricci curvature bounds}.
\newblock {\em Inventiones mathematicae}, 102(2):429--446, 1990.

\bibitem[And06]{And06}
M.~T. Anderson.
\newblock {Dehn Filling and Einstein Metrics in Higher Dimensions}.
\newblock {\em Journal of Differential Geometry}, 73(2):219--261, 2006.

\bibitem[And10]{And10} M.~T. Anderson.
\newblock
{\em A survey of Einstein metrics on 4-manifolds}, 
Adv. Lect. Math. (ALM), 14, International Press, Somerville, MA 2010, 1--39.

%\bibitem[BCFS22]{BCFS22}
%U. Bader, P.E. Caprace, A. Furman, A. Sisto,
%\newblock {Hyperbolic actions of higher rank lattices come from rank-one factors}.
%\newblock {arXiv:2206.06431}


%\bibitem[BS91]{BS91} V. Bangert and V. Schroeder.
%\newblock {Existence of flat tori in analytic manifolds of nonpositive curvature}.
%\newblock {Ann. Sci. Ecole Norm. Sup.}, (4)24:605--634, 1991.

%\bibitem[BL]{BL} A. Barthels and W. L\"uck.
%\newblock{}

\bibitem[Ber00]{Ber00}
N.~Bergeron.
\newblock {Premier nombre de Betti et spectre du Laplacien des certaines variétés hyperboliques}.
\newblock {\em L'Enseignement Mathématique}, 46:109--137, 2000.

\bibitem[BHW11]{BHW11}
N.~Bergeron, F.~Haglund, and D.~T.~Wise.
\newblock{Hyperplane sections in arithmetic hyperbolic manifolds}.
\newblock{\em Journal of the London Mathematical Society}, 83(2): 431--448, 2011.

\bibitem[Bes08]{Bes08}
A.~L. Besse.
\newblock{\em Einstein manifolds}. Classics in Mathematics. Reprint of the 1987 edition.
\newblock{Springer-Verlag}, 2008.

\bibitem[BCG95]{BCG95} G.~Besson, G.~Courtois, and S.~Gallot.
\newblock{Entropies and rigidit\'es des
espaces local\'ement symm\'etriques de courbure strictement n\'egative}, 
\newblock{\em Geometric and Functional Analysis}, 5:731--799, 1995. 



\bibitem[Biq00]{Biq00}
O.~Biquard.
\newblock {\em {M\'etriques d'Einstein asymptotiquement sym\'etriques}}.
\newblock Number 265 in Ast\'erisque. Soci\'et\'e math\'ematique de France,
  2000.

\bibitem[Bro82]{Bro82}
K.S.~Brown.
\newblock {\em Cohomology of groups}.
\newblock Graduate Texts in Mathematics. Springer, 1982.
%\newblock Reprint of the 1998 edition.


\bibitem[CLW18]{CLW18} 
S. Cappell, A. Lubotzky, and S. Weinberger.
\newblock{A trichotomy for transformation groups of locally symmetric manifolds
and topological rigidity}.
\newblock{\em Advances Math.}, 327:25--46, 2018.

\bibitem[dC92]{dC92}
M.~P.~do Carmo.
\newblock{\em Riemannian Geometry}.
\newblock Birkh\"auser, 2. edition, 1992.

  
\bibitem[ES64]{ES64}
J.~Eells and J.~H.~Sampson.
\newblock{Harmonic Mappings of Riemannian Manifolds}.
\newblock{\em American Journal of Mathematics}, 86(1):106--160, 1964.

\bibitem[Em23]{Em23}
V.~Emery.
\newblock{\em Arithmetic groups}.
\newblock{Lecture notes on an introductory course on arithmetic lattices}, arXiv:2308.03752.

\bibitem[FP20]{FP20}
J.~Fine and B.~Premoselli.
\newblock {Examples of compact Einstein four-manifolds with negative
  curvature}.
\newblock {\em Journal of the American Mathematical Society}, 33(4):991--1038, 2020.

%\bibitem[FH19]{FH19}
%T.~Fisher and B.~Hasselblatt.
%\newblock{\em Hyperbolic Flows}.
%\newblock{Zurich Lectures in Advanced Mathematics}, EMS Press, 2019. 

%\bibitem[F57]{F57} 
%R. Fox.
%\newblock {Covering spaces with singularities}.
%\newblock {\em A symposium in honor of S. Lefschetz}, 
%Princeton University Press, 243--257, 1957.


\bibitem[GT01]{GT01}
D.~Gilbarg and N.~S.~Trudinger.
\newblock {\em Elliptic Partial Differential Equations of Second Order}.
\newblock Classics in Mathematics. Springer-Verlag, 2001.
\newblock Reprint of the 1998 edition.

\bibitem[GT87]{GT87}
M.~Gromov and W.~Thurston.
\newblock{Pinching constants for hyperbolic manifolds}.
\newblock{\em Inventiones Mathematicae}, 89:1--12, 1987.

\bibitem[HJ22]{HJ22}
U.~Hamenst\"adt and F.~J\"ackel.
\newblock{\em Stability of Einstein Metrics and effective hyperbolization in large Hempel distance}.
\newblock{Preprint}, \href{https://arxiv.org/abs/2206.10438}{arXiv:2206.10438}, 2022. 


\bibitem[HJ24]{HJ24}
U.~Hamenst\"adt and F.~J\"ackel.
\newblock{Rigidity of geometric structures}.
\newblock{\em Geometriae Dedicata}, 218(1): Paper no. 16, 2024.

%\bibitem[He91]{He91} 
%L. Hernandez.
%\newblock{ K\"ahler manifolds and $1/4$-pinching}. 
%\newblock{\em Duke Mathematical Journal}, 62(3): 601--611, 1991.

\bibitem[Hir56]{Hir56}
F.~Hirzebruch.
\newblock{\em Automorphe Formen und der Satz von Riemann-Roch}.
\newblock{Symposium
International de Topologia Algebraica, Mexico}, 129--144, 1956.


\bibitem[JK82]{JK82}
J.~Jost and H.~Karcher.
\newblock {Geometrische Methoden zur Gewinnung von A-Priori-Schranken f{\"u}r
  harmonische Abbildungen}.
\newblock {\em Manuscripta Mathematica}, 40:27--77, 1982.


%\bibitem[Ka07]{Ka07} M. Kapovich.
%\newblock{Convex projective structures on Gromov Thurston manifolds}. 
%\newblock {\em Geometry \& Topology}, 11:1777--1830, 2007.


\bibitem[KS12]{KS12} S.~Kerckhoff, and P.~Storm,
\newblock{Local rigidity of hyperbolic manifolds with geodesic boundary}.
\newblock{\em Journal of Topology}, 5:757--784, 2012.

\bibitem[Koi78]{Koi78}
N.~Koiso.
\newblock{Nondeformability of Einstein metrics}.
\newblock{\em Osaka Journal of Mathematics}, 15(2):419--433, 1978.

\bibitem[LR07]{LR07}
J.~F.~Lafont and R.~Roy.
\newblock{A note on the characteristic classes of non-positively curved manifolds}.
\newblock{\em Expositiones Mathematicae}, 25(1):21--35, 2007.

\bibitem[La73]{La73}
T.Y.~Lam.
\newblock{\em Algebraic Theory of Quadratic Forms}. 
\newblock{Benjamin Inc.}, 1973.


\bibitem[LS01]{LS01}
R.~C.~Lyndon and P.~E.~Schupp.
\newblock{\em Combinatorial Group Theory}. Classics in Mathematics. Reprint of the 1977 edition.
\newblock{Springer Verlag}, 2001.

%\bibitem[Min23]{Min23}
%B. Minemyer, {\rm Complex hyperbolic Gromov Thurston metrics and 
%almost $1/4$-pinched K\"ahler manifolds}, arXiv:2307.15550.



\bibitem[Nov65]{Nov65}
S.~P. Novikov.
\newblock{Topological invariance of rational Pontrjagin classes}.
\newblock{\em Soviet Mathematics. Doklady}, 6:921--923, 1965.

\bibitem[O20]{O20} 
P.~Ontaneda. 
\newblock{\rm Riemannian hyperbolization}.
\newblock{\em  Publications mathématiques de l'IHÉS}, 131:1--72, 2020.


\bibitem[Pet16]{Pet16}
P.~Petersen.
\newblock {\em {Riemannian Geometry}}.
\newblock Springer, 3. edition, 2016.


%\bibitem[ST23]{ST23} M. Stover, D. Toledo,
%{\em Residual finiteness for central extensions of lattices in $PU(n,1)$ and negatively
%curved projective varieties}, Pure Appl. Math. Q. 18 (2022), 1771-1797.

\bibitem[Pr42]{Pr42}
A.~Preissmannn.
\newblock{Quelques propriétés globales des espaces de Riemann}.
\newblock{\em Commentarii Mathematici Helvetici}, 15:175--216, 1942.

\bibitem[Sa86]{Sa86} 
J.H.~Sampson.
\newblock{Applications of harmonic maps to K\"ahler geometry}. 
\newblock{Contemp. Math.}, 49:125--133, 1986.


\bibitem[Top06]{Top06}
P.~Topping.
\newblock {\em {Lectures on the Ricci Flow}}.
\newblock London Mathematical Society Lecture Note Series. Cambridge University
  Press, 2006.

\bibitem[W62]{W62} 
J.A.~Wolf. 
\newblock{Discrete groups, symmetric spaces, and global holonomy}. 
\newblock{\em Amer. J. Math.}, 84:527--542, 1962.


%\bibitem[YZ91]{YZ91} 
%S.T. Yau and F. Zheng.
%\newblock{Negatively curved $1/4$-pinched Riemannian metric on a compact K\"ahler manifold}.
%\newblock{\em Inventiones Mathematicae}, 103:527--535, 1991.

\end{thebibliography}
\end{document}